\providecommand{\tabularnewline}{\\}
\providecommand{\algorithmname}{Algorithm}
\theoremstyle{plain}
\newtheorem{thm}{\protect\theoremname}
\theoremstyle{plain}
\newtheorem{lem}[thm]{\protect\lemmaname}
\theoremstyle{example}
\newtheorem{assumption}{Assumption}
\theoremstyle{definition}
\newtheorem{remark}{Remark}
\theoremstyle{plain}
\newtheorem{mythm}{Theorem}
\theoremstyle{plain}
\newtheorem{myprop}{Proposition}
\providecommand{\lemmaname}{Lemma}
\providecommand{\theoremname}{Theorem}
\providecommand{\lemmaname}{Lemma}
\providecommand{\theoremname}{Theorem}
\begin{document}

\title{Distributed Derivative-free Learning Method\linebreak{}
 for Stochastic Optimization over a Network\linebreak{}
 with Sparse Activity}

\author{Wenjie~Li, Mohamad~Assaad,~\IEEEmembership{Senior Member, IEEE},
and Shiqi~Zheng \thanks{This paper has been presented in part at IEEE 57th Conference on Decision
and Control (CDC), Miami Beach, FL, USA, December, 2018 \cite{li2018cdc}.}\thanks{W. Li was with  Laboratoire des Signaux et Systèmes,
CentraleSupélec, when this manuscript was submitted. He is now with
Paris Research Center, Huawei Technologies, France (e-mail: liwenjie28@huawei.com) }\thanks{M. Assaad is with Laboratoire des Signaux et
Systèmes (L2S, UMR CNRS 8506), CentraleSupélec, France (e-mail: mohamad.assaad@centralesupelec.fr).}\thanks{S. Zheng is with the Hubei Key Laboratory of Advanced Control and
Intelligent Automation for Complex Systems, China University of Geosciences,
Wuhan 430074, China.}}
\maketitle
\begin{abstract}
This paper addresses a distributed optimization problem in a communication
network where nodes are active sporadically. Each active node applies
some learning method to control its action to maximize the global
utility function, which is defined as the sum of the local utility
functions of active nodes. We deal with stochastic optimization problem
with the setting that utility functions are disturbed by some non-additive
stochastic process. We consider a more challenging situation where
the learning method has to be performed only based on a scalar approximation
of the utility function, rather than its closed-form expression, so
that the typical gradient descent method cannot be applied. This setting
is quite realistic when the network is affected by some stochastic
and time-varying process, and that each node cannot have the full
knowledge of the network states. We propose a distributed optimization
algorithm and prove its almost surely convergence to the optimum.
Convergence rate is also derived with an additional assumption that
the objective function is strongly concave. Numerical results are
also presented to justify our claim. 
\end{abstract}

\begin{IEEEkeywords}
Stochastic optimization, derivative-free learning, sparse network,
convergence analysis, distributed algorithm 
\end{IEEEkeywords}

\section{Introduction}

We consider the distributed optimization of a network with sparse
communication, \emph{i.e.}, nodes are active occasionally in a discrete-time
system, so that only a small number of nodes are active at the same
time-slot. For example, in the modern communication system, independent
mobile phone users communicate with the base station at different
time. This model is also important in Internet of Things such as underwater
wireless sensor networks, where sensor nodes keep asleep frequently
to save battery.

Suppose that the performance of the network is characterized by a
global utility function, which is defined as the sum of the local
utility functions of all active nodes at one time-slot. Each active
node aims to properly control its own action to maximize the global
utility. The local utility of any active node is a function of the
action of all the active nodes, as well as some stochastic environment
state that can be seen as a non-additive stochastic process, \emph{e.g.},
stochastic and time-varying channel gain in wireless communication
system. Such stochastic optimization problem is important for the
improvement of network performance and has attracted much attention
in various field, \emph{e.g.}, radio resource management~\cite{gaie2008distributed},
power control~\cite{chiang2008power,hassan2013distributed}, and
beamforming allocation~\cite{sheng2017joint}.

The convex optimization problem is well investigated by applying the
typical gradient descent/ascent method \cite{snyman2005practical},
under the condition that each node is able to calculate the partial
derivative related to its action. Sub-gradient based methods have
been proposed to solve distributed optimization of the sum of several
convex function, over time-varying \cite{nedic2015distributed,nedic2016stochastic,nedic2017achieving}
or asynchronous \cite{nedic2011asynchronous,srivastava2011distributed}
networks. In these previous work, each node/agent requires the gradient
information of its local function to perform the optimization algorithm.

Stochastic learning schemes based on stochastic gradient descent have
been widely studied. In this work we consider a more challenging framework
that nodes are unaware of any gradient information. Since the network
is distributed by some non-additive stochastic process, our setting
is quite practical in the following situations: 
\begin{itemize}
\item the system is so complex that the closed-form expression of any utility
function is unavailable; 
\item the computation of gradient requires much informational exchange and
introduces a huge burden to the network. 
\end{itemize}
A detailed motivating example is presented in Section~\ref{sec:Motivating-Example}
to highlight the interest of our setting. We assume that each active
node only has a numerical observation of its local utility, our optimization
algorithm should be performed \emph{only} based on this zeroth-order
information. Moreover, we consider a distributed setting such that
nodes can only exchange the local utilities with their neighbors in
order to estimate the global utility, which make the problem more
challenging.

\subsection{Related work}

Our derivative-free optimization problem is known as zero-order stochastic
optimization and bandit optimization. There are numbers of work based
on two-point gradient estimator, \emph{e.g.,} \cite{agarwal2010optimal,duchi2015optimal,liu2018zeroth,hajinezhad2019zone},
under the assumption that two values of the objective function $f(\boldsymbol{a}_{k}^{\left(1\right)};\boldsymbol{s}_{k})$
and $f(\boldsymbol{a}_{k}^{\left(2\right)};\boldsymbol{s}_{k})$ are
available under the same stochastic parameter $\boldsymbol{s}_{k}$.
However such assumption is unrealistic in our setting, e.g., in i.i.d.
channel, the value of $\boldsymbol{s}_{k}$ change fast, it is impossible
to observe two network utilities using different action $\boldsymbol{a}_{k}$
while under the same environment state. Therefore, we should propose
some gradient estimator only based on a single realization of objection
function to estimate the gradient. A classical method was proposed
in \cite{flaxman2005online} of which the algorithm is \emph{near-optimal}:
for general convex and Lipschitz objective functions, the resulted
optimization error is $O(K^{-0.25})$ after a total number of $K$
iterations. From then on, several advanced methods were proposed (e.g.,
\cite{shamir2013complexity,hazan2014bandit,bubeck2017kernel}) to
accelerate the convergence speed of the algorithm for the general
convex functions or the convex functions with additional assumptions,
e.g., smooth or strongly convex. However, the optimal algorithm to
address bandit optimization is still unknown. It is worth mentioning
that, the optimization error cannot be better than $O(K^{-0.5})$
after $K$ iterations, according to the lower bounds of the convergence
rate derived in \cite{jamieson2012query,shamir2013complexity,duchi2015optimal}.

Although bandit optimization has attracted much attention in recent
years, the existed algorithms are usually centralized and hard be
decentralized. {In fact, in all the above mentioned
references, their algorithms contain the operations of vectors and
matrices that require a control center to handle. In our setting,
each node only controls its local variable (a coordinate) and may
not have the full knowledge of the objective function due to the distributed
setting. For example, in the algorithm proposed in \cite{flaxman2005online},
the core is to generate a random } {\emph{unit}} {{}
perturbation vector at each iteration, which is the key to ensure
that the expectation of the resulted gradient estimator is } {\emph{equal}} {{}
to the gradient of a } {\emph{smoothed}} {{}
version of the objective function by applying Stokes Theorem. This
requires a control center as the resulted perturbation vector cannot
have a unit norm without such a control center. In our distributed
network, each node can only generate its own random perturbation independently.
Different tools are needed to obtain our analytically results: we
derive upper bounds for the bias of gradient estimator. Moreover,
the existing work in learning community usually focused on the performance
after a given number of iterations. However, finite-time horizon is
not adapted to wireless networks, as it is usually hard to predict
the duration of connection and the total number of iterations. For
the above reason, in this work, we aim to propose some optimal solution
with } {\emph{asymptotic}} {{} performance
guarantee. }

In our recent work \cite{li2017distributed}, we have proposed a learning
algorithm named DOSP (distributed optimization algorithm using stochastic
perturbation) to solve the above derivative-free optimization problem,
however, in a synchronized network with small number of nodes, \emph{i.e.},
nodes are always active and update their action at each time-slot.
The basic idea of the DOSP algorithm is to estimate the gradient of
the objective function only based the numerical measurement of the
objective function. It has been shown that the estimation bias of
gradient is vanishing as the number of nodes is finite. The convergence
of the DOSP can be proved with the tools of stochastic approximation
\cite{kushner2012stochastic}. This technique is closely related to
simultaneous perturbation gradient approximation in \cite{spall1992multivariate,spall1997one}
and extremum seeking with stochastic perturbation proposed in \cite{manzie2009extremum}.
Please refer to \cite{li2017distributed} for the detailed discussion.
It is worth mentioning that, sine perturbation based extremum seeking
method \cite{ariyur2003real,frihauf2012nash,hanif2012spawc} can be
another option to solve the derivative-free optimization problem.
However, it is impractical to ensure that the sine function used by
each node is orthogonal in a distributed setting, especially when
the number of nodes is large.

\subsection{Our contribution}

This paper extends our previous results in \cite{li2017distributed}
by considering a more realistic network model, \emph{i.e}., nodes
are sporadically active and the entire network may be of large scale.
The achievable value of action of each node is considered as constrained,
\emph{i.e.}, belonging to some closed-interval. We present a modified
DOSP algorithm with two major differences compared with the original
DOSP algorithm: nodes can update their action only when they are active;
each node updates its step-size \emph{asynchronously},\emph{ independently},
and \emph{randomly}, according to its times of being active.

This paper focuses on the convergence analysis of the proposed learning
algorithm. Convergence rate has also be investigated with an additional
assumption that the utility functions are strongly concave. Compared
with that in \cite{li2017distributed}, the analysis is much more
challenging because of the additional random terms. The network is
dynamic as nodes have random activity, its global utility function
is harder to be characterized than a fixed network that nodes are
always active. As we try to estimate the gradient using the numerical
value of utility function, an essential term to be analyzed is the
estimation bias of gradient. In \cite{li2017distributed}, an upper
bound of such bias term is proved to be proportional to the vanishing
step-size, which is deterministic and identical for all nodes at each
iteration.  {Due to the random activity of each node,
the algorithm is performed in an asynchronous manner, i.e., the times
of update of each node is random. As a consequence, the step-size
of each node (function of times of update) is random and independent,
which makes the problem further challenging. We have to resort to
some new tools such as concentration inequalities to show that the
bias term is vanishing as well. }  {It is notable that
our proposed solution can achieve the optimal convergence rate when
the objective function is smooth and strongly concave: our achievable
optimization error is proved to be $O(K^{-0.5})$, which is the same
compared with the lower bounds in \cite{shamir2013complexity,duchi2015optimal}
in terms of the decreasing order. }

The rest of the paper is organized as follows. Section~\ref{sec:System-Model}
describes the problem as well as some basic assumptions. Section~\ref{sec:Motivating-Example}
provides examples to motivate the interest of the problem. Section~\ref{sec:Main-results}
presents our distributed optimization algorithm, of which the almost
sure convergence discussed in Section~\ref{subsec:Convergence-results}.
The convergence rate of the proposed learning algorithm is derived
in Section~\ref{sec:Convergence-Rate}. Section~\ref{sec:Numerical-illustration}
presents some numerical illustrations and Section~\ref{sec:Conclusion}
concludes this paper. Main notations in this paper are listed in Table~1.

\begin{table}
\caption{Main notations and their interpretation}
\centering{}%
\begin{tabular}{|c|>{\raggedright}p{0.72\columnwidth}|}
\hline 
$\mathcal{N}$  & set of nodes\tabularnewline
\hline 
$\mathcal{N}^{\left(k\right)}$  & set of active nodes at time-slot~$k$\tabularnewline
\hline 
$q_{\mathrm{a}}$  & the probability of a node being active at each time-slot\tabularnewline
\hline 
$\mathcal{I}^{\left(i,k\right)}$  & set of active nodes which have successfully sent their local utilities
to another active node~$i$ at time-slot~$k$\tabularnewline
\hline 
$q_{\mathrm{r}}$  & the probability of the successful reception of a local utility from
an active node to another\tabularnewline
\hline 
$\delta_{i,k}$  & a binary variable indicating whether node~$i$ is active at time-slot~$k$\tabularnewline
\hline 
$n_{k}$  & number of active nodes at time-slot~$k$\tabularnewline
\hline 
$\lambda$  & expected value of $n_{k}$\tabularnewline
\hline 
\hline 
$a_{i,k}$  & value of the action performed by node~$i$ at time-slot~$k$\tabularnewline
\hline 
$\mathbf{S}_{k}$  & stochastic environment matrix\tabularnewline
\hline 
$u_{i}$  & local utility function of node~$i$\tabularnewline
\hline 
$\widetilde{u}_{i,k}$  & numerical observation of $u_{i}$ at time-slot~$k$\tabularnewline
\hline 
$\eta_{i,k}$  & additive noise, the difference between $\widetilde{u}_{i,k}$ and
$u_{i}\left(\boldsymbol{a}_{k},\boldsymbol{\delta}_{k},\mathbf{S}_{k}\right)$\tabularnewline
\hline 
$f$  & global utility function of the network \tabularnewline
\hline 
$F$  & average global utility function of the network \tabularnewline
\hline 
$G$  & expected value of $f$ with a given realization of $\boldsymbol{\delta}_{k}$\tabularnewline
\hline 
\hline 
$\Phi_{i,k}$  & random perturbation used by node~$i$ at time-slot~$k$\tabularnewline
\hline 
$\left\{ \gamma_{.}\right\} ,\left\{ \beta_{.}\right\} $  & vanishing sequences from which step-sizes take values\tabularnewline
\hline 
$\ell_{i,k}$  & index of $\left\{ \gamma_{.}\right\} $ and $\left\{ \beta_{.}\right\} $
to be used as step-sizes \tabularnewline
\hline 
$\widetilde{\gamma}_{i,k},\widetilde{\beta}_{i,k}$  & step-sizes used by node~$i$ at time-slot~$k$\tabularnewline
\hline 
\end{tabular}
\end{table}

\section{System Model and Assumptions \label{sec:System-Model}}

\subsection{Network model}

Consider a network $\mathcal{N}$ with $N=\left|\mathcal{N}\right|$
nodes and a time-varying directed graph $\mathcal{G}^{\left(k\right)}=(\mathcal{N},\mathcal{E}^{\left(k\right)})$
at each discrete time-slot~$k$. Note that the edge set $\mathcal{E}^{\left(k\right)}$
is a set of pairs of nodes that are able to have direct communication.
We can use a communication matrix $\mathbf{E}(k)=[E_{i,j}(k)]_{i,j\in\mathcal{N}}$
to describe the network connectivity, with $E_{i,j}(k)\neq0$ if and
only if $(i,j)\in\mathcal{E}^{\left(k\right)}$. In this work, the
network topology is assumed to be stochastic, such that any two different
nodes can become neighbors with a non-zero constant probability, i.e.,
$\mathbb{P}(E_{i,j}(k))>0$, $\forall i,j\in\mathcal{N}$. It is worth
mentioning that such assumption can be naturally satisfied when nodes
are moving freely in some closed area.

Suppose that at each discrete time-slot~$k$, only a random subset
$\mathcal{N}^{\left(k\right)}\subseteq\mathcal{N}$ of nodes are \emph{active},
\emph{i.e.}, perform some action. Introduce a binary variable $\delta_{i,k}$
to indicate whether node~$i$ is active or not at time-slot~$k$,
\emph{i.e.}, 
\[
\delta_{i,k}=\begin{cases}
1, & \textrm{if }i\in\mathcal{N}^{\left(k\right)},\\
0, & \textrm{else}.
\end{cases}
\]
Define $a_{i,k}$ as the \emph{value} of the action of node~$i$
at time-slot~$k$ under the condition that $\delta_{i,k}=1$. Suppose
that the value of $a_{i,k}$ is bounded, \emph{i.e.}, $a_{i,k}\in\mathcal{A}_{i}=\left[a_{i,\min},a_{i,\max}\right]$.
Denote $\mathcal{A}=\mathcal{A}_{1}\times\ldots\times\mathcal{A}_{N}$
as the feasible set of the action vector $\boldsymbol{a}_{k}=\left[a_{1,k},\ldots,a_{N,k}\right]^{T}$.
Denote 
\begin{equation}
\sigma_{\boldsymbol{a}}^{2}=\max_{i\in\mathcal{N}}\left\{ a_{i,\min}^{2},a_{i,\max}^{2}\right\} .\label{eq:amax-1}
\end{equation}

Introduce $n_{k}=\left|\mathcal{N}^{\left(k\right)}\right|=\sum_{i=1}^{N}\delta_{i,k}$
the number of active nodes at time-slot~$k$. Mathematically, we
assume that:

\begin{assumption}\label{Assumption:net}The binary variables $\delta_{i,k}$
are \emph{i.i.d.} with $\mathbb{P}(\delta_{i,k}=1)=q_{\mathrm{a}}$.
Then $n_{k}$ follows a binomial distribution with $\mathbb{E}(n_{k})=Nq_{\mathrm{a}}=\lambda$.
In the situation where $N\rightarrow\infty$, the value of $q_{\mathrm{a}}$
is small such that $\lambda<\infty$, $n_{k}$ follows a Poisson distribution
with parameter $\lambda$.\end{assumption}

Note that we can have a large network with $N\rightarrow\infty$,
while our results hold for any value of $N$ as long as $\lambda=Nq_{\mathrm{a}}<\infty$.

\subsection{Utility functions}

We assume that each active node~$i$ with $\delta_{i,k}=1$ is able
to evaluate a pre-defined local utility function $u_{i}(\boldsymbol{a}_{k},\boldsymbol{\delta}_{k},\mathbf{S}_{k})$,
which depends on the action vector $\boldsymbol{a}_{k}$, the activity
vector $\boldsymbol{\delta}_{k}=\left[\delta_{1,k},\ldots,\delta_{N,k}\right]^{T}$,
and is also disturbed by a non-additive stochastic process $\mathbf{S}_{k}$
of the whole network, \emph{e.g.}, stochastic channels in wireless
networks. Consider $\mathbf{S}_{k}\in\mathcal{S}$ as a stochastic
matrix to describe the environment state of the network at any time-slot~$k$,
which is assumed to be independent and identically distributed (i.i.d.)
in this paper. The local utilities of the non-active nodes are not
meaningful, thus we define $u_{i}=0$ if $\delta_{i,k}=0$.

The \emph{global} utility $f\left(\boldsymbol{a}_{k},\boldsymbol{\delta}_{k},\mathbf{S}_{k}\right)$
of the entire network is defined as the sum of local utilities of
the active nodes at each time-slot~$k$, \emph{i.e.}, 
\begin{align}
 & f\left(\boldsymbol{a}_{k},\boldsymbol{\delta}_{k},\mathbf{S}_{k}\right)=\sum_{i\in\mathcal{N}^{\left(k\right)}}u_{i}(\boldsymbol{a}_{k},\boldsymbol{\delta}_{k},\mathbf{S}_{k}).\label{f}
\end{align}
We are interested in the configuration of the value of $a_{i,k}$
for each node~$i$ such that $i\in\mathcal{N}^{\left(k\right)}$
at each time-slot~$k$, in order to the maximize of the\emph{ average
global} utility function 
\begin{equation}
F\left(\boldsymbol{a}_{k}\right)=\mathbb{E}_{\boldsymbol{\delta},\mathbf{S}}\left(f\left(\boldsymbol{a}_{k},\boldsymbol{\delta}_{k},\mathbf{S}_{k}\right)\right).\label{eq:F}
\end{equation}
It is also necessary to define the average global utility function
with a given realization of the activity vector $\boldsymbol{\delta}_{k}$,\emph{
i.e.}, 
\begin{equation}
G\left(\boldsymbol{a}_{k},\boldsymbol{\delta}_{k}\right)=\mathbb{E}_{\mathbf{S}}\left(f\left(\boldsymbol{a}_{k},\boldsymbol{\delta}_{k},\mathbf{S}\right)\right).\label{eq:G}
\end{equation}
According to Assumption~\ref{Assumption:net}, it is easy to deduce
that 
\begin{equation}
F\left(\boldsymbol{a}_{k}\right)=\sum_{\boldsymbol{\delta}_{k}\in\mathcal{D}}q_{\mathrm{a}}^{n_{k}}\left(1-q_{\mathrm{a}}\right)^{N-n_{k}}G\left(\boldsymbol{a}_{k},\boldsymbol{\delta}_{k}\right)\label{eq:F_G}
\end{equation}
with $\mathcal{D}=\{\boldsymbol{\delta}=\left[\delta_{1},\ldots,\delta_{N}\right]^{T}:\delta_{i}\in\left\{ 0,1\right\} ,\forall i\}$.

Assume that at time-slot~$k$ each active node~$i\in\mathcal{N}^{\left(k\right)}$
is able to have a numerical observation $\widetilde{u}_{i,k}$ of
$u_{i}(\boldsymbol{a}_{k},\boldsymbol{\delta}_{k},\mathbf{S}_{k})$:
\begin{equation}
\widetilde{u}_{i,k}=u_{i}(\boldsymbol{a}_{k},\boldsymbol{\delta}_{k},\mathbf{S}_{k})+\eta_{i,k},
\end{equation}
where $\eta_{i,k}$ is the additive random noise caused by observation
of $u_{i}$. Such noise is assumed to be statistically independent
and have zero mean and bound variance.

\begin{assumption}\label{Assumption:add_noise} For any integer $k$
and $i\in\mathcal{N}^{\left(k\right)}$, we have $\mathbb{E}\left(\eta_{i,k}\right)=0$
and $\mathbb{E}\left(\eta_{i,k}^{2}\right)=\sigma_{\eta}^{2}<\infty$.
Besides, for any $i\neq j$ and $k\neq k'$, we have $\mathbb{E}\left(\eta_{i,k}\eta_{j,k}\right)=\mathbb{E}\left(\eta_{i,k}\eta_{i,k'}\right)=0$.\end{assumption}

In order to approximate the global utility of the network, active
nodes have to broadcast their observation of local utilities to their
active neighbors (other active nodes within transmission range). Without
any communication, an active node only knows its local utility. We
consider a realistic situation where an active node~$i$ can receive
$\widetilde{u}_{j,k}$ from another active node~$j$ only if both
of the following events occur: \emph{E1.} node~$j$ is a neighbor
of node~$i$ at time-slot~$k$;\emph{ E2.} there is no collision
or packet loss during the transmission. In other words, node~$i$
receives $\widetilde{u}_{j,k}$ from a subset $\mathcal{I}^{\left(i,k\right)}$
of its active neighbors, with $\mathcal{I}^{\left(i,k\right)}\subseteq\mathcal{N}^{\left(k\right)}\setminus\left\{ i\right\} $.
Mathematically:

\begin{assumption}\label{Assumption:incomp} At any time-slot~$k$,
any active node~$i\in\mathcal{N}^{\left(k\right)}$ knows the utility
$\widetilde{u}_{j,k}$ of another active node~$j\in\mathcal{N}^{\left(k\right)}$
with a constant probability $q_{\mathrm{r}}\in\left(0,1\right]$,
$i.e.$, 
\begin{equation}
\mathbb{P}\left(j\in\mathcal{I}^{\left(i,k\right)}\right)=q_{\mathrm{r}},\:\mathbb{P}\left(j\notin\mathcal{I}^{\left(i,k\right)}\right)=1-q_{\mathrm{r}},\:\forall j\neq i.\label{eq:p}
\end{equation}
Note that $q_{\mathrm{r}}$ is in fact a joint probability of events
E1 and E2.\end{assumption}

In Section~\ref{subsec:Estimation_f}, we will present an efficient
way to estimate the global utility $\widetilde{f}$ using incomplete
information of $\widetilde{u}_{i,k}$.

\begin{remark}Note that it is straightforward to extend the results
in this work to a more general case where $\mathbb{P}(j\in\mathcal{I}^{\left(i,k\right)})$
is not identical. We assume that $\mathbb{P}(j\in\mathcal{I}^{\left(i,k\right)})=q_{\mathrm{r}}$
mainly to lighten the expressions of this paper. \end{remark}

 {It is worth mentioning that our aforementioned network
model can hold in wireless settings. In fact, the wireless link between
any two nodes in such a network is affected by fast fading, modeled
usually by Rayleigh or Nakagami distribution. This implies that the
link changes from one slot to another in an i.i.d. way. If the link
is good, then the nodes can communicate and if the link is bad they
cannot communicate. As a result, the link qualities in such a time-varying
network are reshuffled at each slot. } 

\subsection{Problem formulation}

With the above definition, our problem can be written as 
\begin{equation}
\begin{cases}
\underset{\boldsymbol{a}}{\textrm{max}} & F\left(\boldsymbol{a}\right)=\mathbb{E}_{\boldsymbol{\delta}}\left(G\left(\boldsymbol{a},\boldsymbol{\delta}\right)\right)=\mathbb{E}_{\boldsymbol{\delta},\mathbf{S}}\left(f\left(\boldsymbol{a},\boldsymbol{\delta},\mathbf{S}\right)\right)\\
\textrm{ s.t.} & \boldsymbol{a}\in\mathcal{A}
\end{cases}
\end{equation}
We consider a situation where nodes do not have the knowledge of $\mathbf{S}$
to get the closed-form expression of the utility functions. This setting
is quite realistic as $\mathbf{S}$ may have large dimension and be
constantly time-varying. In this paper, the proposed learning algorithm
is performed only with the numerical value of utility function. An
motivating example is introduced in Section~\ref{sec:Motivating-Example}.

Denote $\boldsymbol{a}^{*}=\left[a_{1}^{*},\ldots,a_{N}^{*}\right]$
as the optimum solution of the problem. To ensure the existence of
$\boldsymbol{a}^{*}$, we assume that:

\begin{assumption}\label{Assumption:F}Both $G\left(\boldsymbol{a},\boldsymbol{\delta}\right)$
and $F\left(\boldsymbol{a}\right)$ are first order and second order
differentialable functions of $\boldsymbol{a}\in\mathcal{A}$. The
optimal point $\boldsymbol{a}^{*}$ exists such that $\partial F\left(\boldsymbol{a}^{*}\right)/\partial a_{i}=0$
and $\partial^{2}F\left(\boldsymbol{a}^{*}\right)/\partial a_{i}^{2}<0$,
$\forall i\in\mathcal{N}$. Besides, $\boldsymbol{a}^{*}$ is not
on the boundary of $\mathcal{A}$, i.e., $a_{i}^{*}\in(a_{1,\min},a_{1,\max})$,
$\forall i\in\mathcal{N}$. The objective function $F$ is strictly
concave such that 
\begin{equation}
\left(\boldsymbol{a}-\boldsymbol{a}'\right)^{T}\!\cdot\!\left(\nabla\!F\left(\boldsymbol{a}\right)-\nabla\!F\left(\boldsymbol{a}'\right)\right)\!<\!0,\:\forall\boldsymbol{a},\boldsymbol{a}'\in\mathcal{A}:\boldsymbol{a}\neq\boldsymbol{a}'.\label{eq:concave}
\end{equation}
\end{assumption}

\begin{remark}It is worth mentioning that we assumed $\left(a_{i,k}-a_{i}^{*}\right)^{T}\frac{\partial}{\partial a_{i,k}}F\left(\boldsymbol{a}_{k}\right)\leq0,$
$\forall i\in\mathcal{N}$ in \cite{li2018cdc}, which has been relaxed
by Assumption~\ref{Assumption:F} in this paper.\end{remark}

We have a further assumption to ensure the performance of the proposed
derivative-free learning algorithm.

\begin{assumption}\label{Assumption:lipc} There exists $\alpha_{G}\in\left(0,+\infty\right)$
such that 
\begin{equation}
\left|\frac{\partial^{2}}{\partial a_{i}\partial a_{j}}G\left(\boldsymbol{a},\boldsymbol{\delta}\right)\right|\leq\alpha_{G},\quad\forall i,j\in\mathcal{N}^{\left(k\right)}
\end{equation}
The function $\boldsymbol{a}\longmapsto u_{i}\left(\boldsymbol{a},\boldsymbol{\delta},\mathbf{S}\right)$
is Lipschitz for any $\boldsymbol{\delta}$ and $\mathbf{S}$, 
\begin{equation}
\left\Vert u_{i}\left(\boldsymbol{a},\boldsymbol{\delta},\mathbf{S}\right)-u_{i}\left(\boldsymbol{a}',\boldsymbol{\delta},\mathbf{S}\right)\right\Vert \leq L_{\mathbf{S}}\left\Vert \left(\boldsymbol{a}-\boldsymbol{a}'\right)\circ\boldsymbol{\delta}\right\Vert ,\label{eq:lipsc}
\end{equation}
with constant $L_{\mathbf{S}}<\infty$. Besides, $L=\sqrt{\mathbb{E}_{\mathbf{S}}\left(L_{\mathbf{S}}^{2}\right)}<\infty$.\end{assumption}

\section{Motivating Example\label{sec:Motivating-Example}}

Recently, derivative-free optimization is of interest in various applications,
e.g., management of fog computing in IoT \cite{chen2018bandit}, sensor
selection for parameter estimation \cite{liu2017zeroth}, and adversarial
machine learning \cite{liu2018zeroth2}. In this section we provide
another motivating example, which of particular interest for the problem
considered in this paper.

Consider a power allocation problem in a network with $N$ transmitter-receiver
links. As shown in Figure~1, each link corresponds to a node in our
system model. Transmitter~$i$ sends some packet to receiver~$i$
when $\delta_{i,k}=1$. Let $\mathbf{S}_{k}=\left[s_{ij,k}\right]_{i,j\in\mathcal{N}}$
denote the time-varying stochastic channel matrix, each element $s_{ij,k}\in\mathbb{R}^{+}$
represents the channel gain between transmitter $i$ and receiver
$j$ at time $k$. Each active transmitter~$i$ sets its transmission
power $p_{i,k}$, the Shannon achievable rate of the link is then
given by \cite{tan2013fast} 
\begin{equation}
r_{i,k}=\log\left(1+\frac{s_{ii,k}p_{i,k}}{\sigma^{2}+\sum_{j\neq i}\delta_{j,k}s_{ji,k}p_{i,k}}\right).
\end{equation}
At each time-slot $k$, define the global utility, which is widely
used in wireless systems, as $y_{k}\left(\boldsymbol{p}_{k},\boldsymbol{\delta}_{k},\mathbf{S}_{k}\right)=\sum_{i\in\mathcal{N}^{\left(k\right)}}(\omega_{1}r_{i,k}-\omega_{2}p_{i,k})$,
where $\omega_{1},\omega_{2}\in\mathbb{R}^{+}$ are constants and
$\omega_{2}p_{i,k}$ denotes the energy costs of the packet transmission.

However, $y_{k}$ is not concave of $p_{i,k}$, $\forall i\in\mathcal{N}^{\left(k\right)}$.
For this reason, we have to consider the approximation of $r_{i,k}$
and some variable change to make the objective function concave, which
is a well known problem in the sum rate maximization problem in wireless
network \cite{tan2013fast}. It is common to use change of variable
$p_{i,k}=\textrm{e}^{a_{i,k}}$ and consider the approximation $y_{k}\approx f_{k}=\sum_{i\in\mathcal{N}^{\left(k\right)}}u_{i}(\boldsymbol{a}_{k},\boldsymbol{\delta}_{k},\mathbf{S}_{k})$
with \cite{tan2013fast} 
\begin{equation}
u_{i}=\!\omega_{1}\!\log\!\left(\!\frac{s_{ii,k}\textrm{e}^{a_{i,k}}}{\sigma^{2}+\sum_{j\neq i}\delta_{j,k}s_{ji,k}\textrm{e}^{a_{j,k}}}\!\right)\!-\omega_{2}\textrm{e}^{a_{i,k}}.\label{eq:u_function-1}
\end{equation}
It is straightforward to show that $\partial^{2}f_{k}/\partial a_{i,k}^{2}<0$,
$\forall i\in\mathcal{N}^{\left(k\right)}$, thus the condition (\ref{eq:concave})
in Assumption~\ref{Assumption:F} is satisfied.

In order to perform classical gradient methods, each transmitter should
be able to evaluate 
\begin{align}
 & \frac{\partial f}{\partial a_{i,k}}\!=\omega_{1}\!-\!\!\sum_{n\in\mathcal{N}^{\left(k\right)}}\!\!\frac{\omega_{1}s_{in,k}\textrm{e}^{a_{i,k}}}{\sigma^{2}\!+\!\sum_{j\neq n}\!\delta_{j,k}s_{jn,k}\textrm{e}^{a_{j,k}}}\!-\!\omega_{2}\textrm{e}^{a_{i,k}},\label{eq:deri_theo}
\end{align}
of which the calculation requires much information, such as the cross-channel
gain $s_{in,k}$ $\forall n\in\mathcal{N}^{\left(k\right)}\setminus\left\{ i\right\} $,
as well as all the interference estimated by each active receiver.
All the channel information has to be estimated and exchanged by each
active node, which is a huge burden for the network. Therefore, we
desire to propose a distributed optimization algorithm only with the
numerical observation of utilities. The framework is distributed such
that each node can only know the local utilities of its neighbors
and of itself, see Figure~1 for more details.

\begin{figure}[h]
\begin{centering}
\includegraphics[width=0.85\columnwidth]{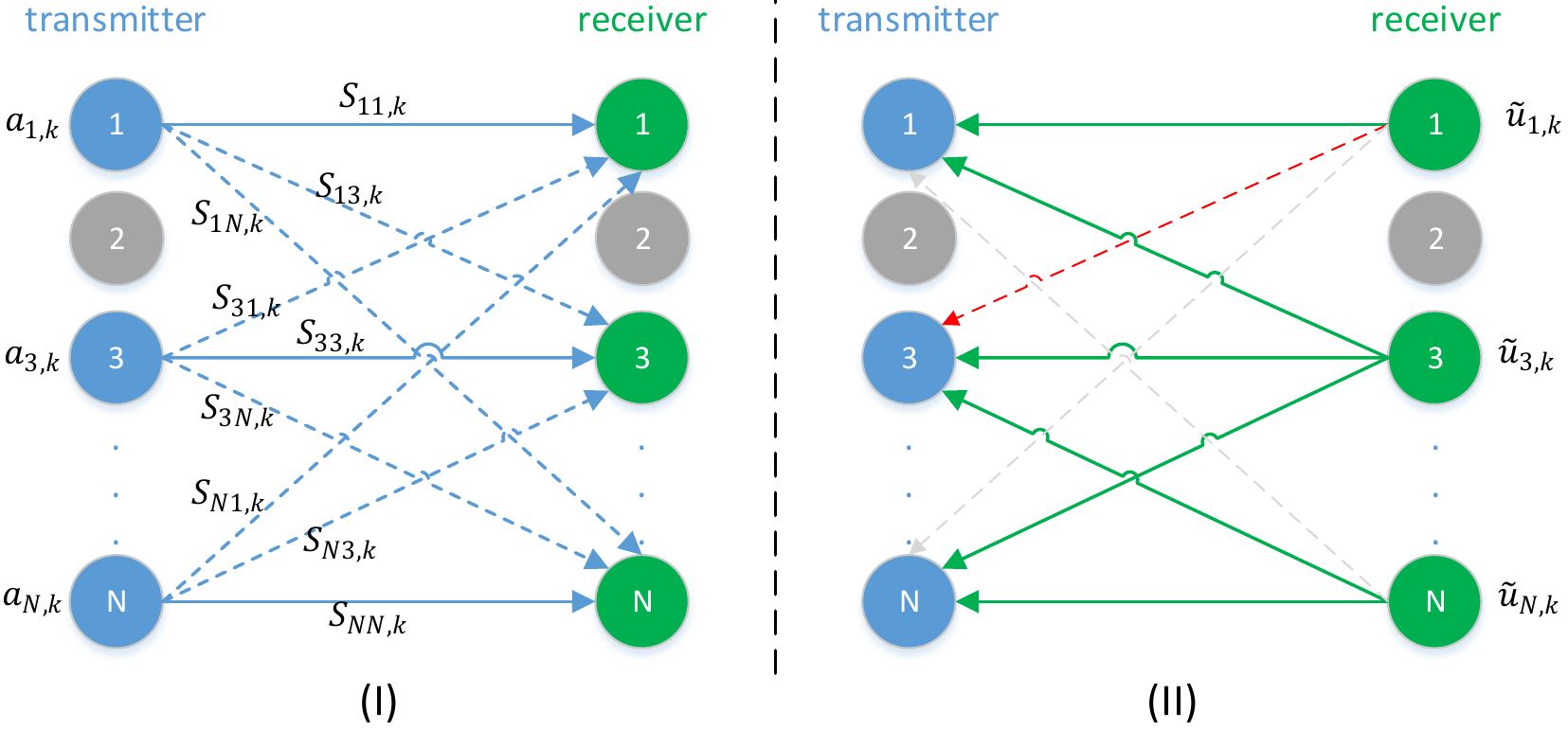} 
\par\end{centering}
\caption{\label{fig:example_ex}(I) At time $k$, link~2 is inactive, each
active transmitter $i$ communicates with its receiver with transmission
power $e^{a_{i,k}}$ and introduces interference to the other links;
(II) Each active receiver $i$ broadcast $\widetilde{u}_{i,k}$ to
its neighbors: the green links mean that there is a successful transmission
of $\widetilde{u}_{i,k}$; the red links represent the transmission
failure caused by collision or packet loss; the gray links mean that
link~1 and link~$N$ are not neighbors so exchange of $\widetilde{u}_{i,k}$
between these nodes.}
\end{figure}

\section{DOSP Learning Algorithm with Sporadic Updates\label{sec:Main-results}}

In this section, we describe our DOSP-S learning algorithm, namely,
distributed optimization algorithm using stochastic perturbation with
sporadic updates. We start with the approximation of the value of
global utility based on the collected local utilities by each active
node in Section~\ref{subsec:Estimation_f}, before the presentation
of DOSP-S in Section~\ref{subsec:Algorithm}.

\subsection{Estimation of global utility $\widetilde{f}$\label{subsec:Estimation_f}}

Recall that the global utility is 0 if no nodes are active, hence
we focus on the opposite situation. For any time-slot~$k$ such that
$n_{k}\geq1$, we consider an arbitrary active node~$i$ as reference
and denote $\widetilde{f}_{i,k}$ as the numerical value of global
utility approximated by node~$i$. If node~$i$ knows the constant
probability $q_{\textrm{r}}$ of successfully receiving $\widetilde{u}_{j,k}$
from another node~$j$, we can have an unbiased estimation of $f$
according to the following proposition.

\begin{myprop}\label{lem:aver_f_in}Suppose that Assumption~\ref{Assumption:incomp}
holds and $q_{\mathrm{r}}$ is known by all nodes, then each active
node~$i$ can estimate 
\begin{align}
{\color{blue}} & \widetilde{f}_{i,k}\left(\boldsymbol{a}_{k},\boldsymbol{\delta}_{k},\mathbf{S}_{k},\mathcal{I}^{\left(i,k\right)}\right)=\widetilde{u}_{i,k}+\frac{1}{q_{\mathrm{r}}}\sum_{j\in\mathcal{I}^{\left(i,k\right)}}\widetilde{u}_{j,k}\label{eq:f_est}
\end{align}
of which the expected value over all possible sets $\mathcal{I}^{\left(i,k\right)}$
and the additive noise $\boldsymbol{\eta}_{k}$ equals to the global
utility function, i.e., 
\begin{equation}
\mathbb{E}_{\mathcal{I},\boldsymbol{\eta}_{k}}\left(\widetilde{f}_{i,k}\left(\boldsymbol{a}_{k},\boldsymbol{\delta}_{k},\mathbf{S},\mathcal{I}^{\left(i,k\right)}\right)\right)=f\left(\boldsymbol{a}_{k},\boldsymbol{\delta}_{k},\mathbf{S}\right).\label{eq:f_inc}
\end{equation}
\end{myprop}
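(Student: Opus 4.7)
The plan is to verify the unbiasedness claim by taking expectations in two stages: first over the additive noise $\boldsymbol{\eta}_k$ (using Assumption~\ref{Assumption:add_noise}), and then over the random reception set $\mathcal{I}^{(i,k)}$ (using Assumption~\ref{Assumption:incomp}). The key algebraic identity to exploit is that an expectation over a random subset whose inclusion indicators have known mean $q_{\mathrm{r}}$ can be converted into a deterministic sum over the full candidate set by dividing each term by $q_{\mathrm{r}}$; this is exactly why the estimator places the factor $1/q_{\mathrm{r}}$ in front of the received utilities.

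First I would rewrite the sum in (\ref{eq:f_est}) using indicator variables. For $j \in \mathcal{N}^{(k)} \setminus \{i\}$, let $\mathbf{1}_{\{j \in \mathcal{I}^{(i,k)}\}}$ denote the indicator of a successful reception. Then
\begin{equation}
\widetilde{f}_{i,k} = \widetilde{u}_{i,k} + \frac{1}{q_{\mathrm{r}}}\sum_{j \in \mathcal{N}^{(k)}\setminus\{i\}} \mathbf{1}_{\{j \in \mathcal{I}^{(i,k)}\}}\,\widetilde{u}_{j,k}.
\end{equation}
Taking the expectation over $\mathcal{I}^{(i,k)}$ conditional on everything else, and using $\mathbb{E}(\mathbf{1}_{\{j \in \mathcal{I}^{(i,k)}\}}) = q_{\mathrm{r}}$ from Assumption~\ref{Assumption:incomp}, together with the independence of $\mathcal{I}^{(i,k)}$ from the observation noise, the factor $1/q_{\mathrm{r}}$ cancels and one obtains $\widetilde{u}_{i,k} + \sum_{j \in \mathcal{N}^{(k)}\setminus\{i\}} \widetilde{u}_{j,k} = \sum_{j \in \mathcal{N}^{(k)}} \widetilde{u}_{j,k}$.

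Next I would take the expectation over $\boldsymbol{\eta}_k$. By Assumption~\ref{Assumption:add_noise} each $\eta_{j,k}$ has zero mean, so $\mathbb{E}_{\boldsymbol{\eta}_k}(\widetilde{u}_{j,k}) = u_j(\boldsymbol{a}_k, \boldsymbol{\delta}_k, \mathbf{S}_k)$. Summing over $j \in \mathcal{N}^{(k)}$ yields exactly the definition (\ref{f}) of $f(\boldsymbol{a}_k,\boldsymbol{\delta}_k,\mathbf{S}_k)$, which establishes (\ref{eq:f_inc}).

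I do not anticipate any serious obstacle here: the argument is essentially Horvitz–Thompson reweighting (inverse-probability weighting for missing-at-random observations) combined with linearity of expectation. The only small care point is to ensure that the randomness of $\mathcal{I}^{(i,k)}$ is independent of $\boldsymbol{\eta}_k$ and of the utilities $u_j(\boldsymbol{a}_k,\boldsymbol{\delta}_k,\mathbf{S}_k)$ so that the two expectations factor cleanly; this is implicit in the modelling of Assumptions~\ref{Assumption:add_noise} and \ref{Assumption:incomp} (reception is a link-layer event independent of the measurement noise), and I would state it explicitly at the start of the proof to justify the iterated expectation.
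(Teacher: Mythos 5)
Your proposal is correct and matches the paper's own proof essentially step for step: the paper likewise introduces the reception indicators (denoted $\kappa_{i,j,k}$ there), uses $\mathbb{E}(\kappa_{i,j,k})=q_{\mathrm{r}}$ to cancel the $1/q_{\mathrm{r}}$ weight and recover $\sum_{j\in\mathcal{N}^{(k)}}\widetilde{u}_{j,k}$, and then invokes the zero-mean noise of Assumption~\ref{Assumption:add_noise} to conclude. The only cosmetic difference is that your opening plan states the two expectations in the reverse order from your execution, but since independence makes them commute this is immaterial.
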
 
\begin{IEEEproof}
Introduce $\kappa_{i,j,k}\in\left\{ 0,1\right\} $ with $\kappa_{i,j,k}=1$
if $j\in\mathcal{I}^{\left(i,k\right)}$, otherwise $\kappa_{i,j,k}=0$.
Then (\ref{eq:f_est}) can be re-written as 
\begin{align}
 & \widetilde{f}_{i,k}=\widetilde{u}_{i,k}+\frac{1}{q_{\mathrm{r}}}\sum_{j\in\mathcal{N}^{\left(k\right)}\setminus\left\{ i\right\} }\kappa_{i,j,k}\widetilde{u}_{j,k}\label{eq:f_est-1}
\end{align}
By Assumption~\ref{Assumption:incomp}, we have $\mathbb{E}(\kappa_{i,j,k})=\mathbb{P}(\kappa_{i,j,k}=1)=q_{\mathrm{r}}$.
Based on (\ref{eq:f_est-1}), we evaluate 
\begin{align}
 & \mathbb{E}_{\mathcal{I}}\left(\widetilde{f}_{i,k}\right)=\widetilde{u}_{i,k}+\frac{1}{q_{\mathrm{r}}}\mathbb{E}_{\mathcal{I}}\left(\sum_{j\in\mathcal{N}^{\left(k\right)}\setminus\left\{ i\right\} }\kappa_{i,j,k}\widetilde{u}_{j,k}\right)\nonumber \\
 & =\widetilde{u}_{i,k}+\frac{1}{q_{\mathrm{r}}}\sum_{j\in\mathcal{N}^{\left(k\right)}\setminus\left\{ i\right\} }\widetilde{u}_{j,k}\mathbb{E}(\kappa_{i,j,k})=\sum_{j\in\mathcal{N}^{\left(k\right)}}\widetilde{u}_{j,k}.\label{eq:f_aver_i}
\end{align}
Since $\mathbb{E}_{\boldsymbol{\eta}}(\widetilde{u}_{j,k})=u_{j}(\boldsymbol{a}_{k},\boldsymbol{\delta}_{k},\mathbf{S}_{k})+\mathbb{E}_{\boldsymbol{\eta}}(\eta_{j,k})=u_{j}(\boldsymbol{a}_{k},\boldsymbol{\delta}_{k},\mathbf{S}_{k})$
by Assumption~\ref{Assumption:add_noise}, we can easily get $\mathbb{E}_{\mathcal{I},\boldsymbol{\eta}_{k}}(\widetilde{f}_{i,k})=f(\boldsymbol{a}_{k},\boldsymbol{\delta}_{k},\mathbf{S}_{k})$,
which concludes the proof. 
\end{IEEEproof}
\begin{remark} In a more general case where the probability of receiving
$\widetilde{u}_{j,k}$ from different nodes is not the same, we can
have an similar estimator of $f$ with trivial extension.\end{remark}

\begin{remark} In our work, we use only the current information of
local utilities to estimate $f$ without considering any previous
utility values. Due to the stochastic environment considered in this
work, there could be a significant difference between $f\left(\boldsymbol{a},\mathbf{S}_{k-1}\right)$
and $f\left(\boldsymbol{a},\mathbf{S}_{k}\right)$ as $\mathbf{S}_{k-1}$
and $\mathbf{S}_{k}$ are independent. Hence we cannot use the previous
values of utilities and apply the compensating scheme as in \cite{defazio2014saga}.
\end{remark}

\subsection{Learning Algorithm\label{subsec:Algorithm}}

This section presents our learning algorithm DOSP-S, which is a modified
version of the DOSP algorithm in \cite{li2017distributed}. We first
introduce some important parameters to be used in our algorithm, as
presented in the following assumption.

\begin{assumption}\label{Assumption:para} (I). $\left\{ \beta_{\ell}\right\} _{\ell\geq0}$
and $\left\{ \gamma_{\ell}\right\} _{\ell\geq0}$ are positive vanishing
sequences, \emph{i.e.}, $\beta_{\ell}=\beta_{0}\ell^{-c_{1}}$ and
$\gamma_{\ell}=\gamma_{0}\ell^{-c_{2}}$, with $\beta_{0}>0$, $\gamma_{0}>0$,
$c_{1}\in\left(0.5,1\right)$, and $c_{2}\in\left(0,1-c_{1}\right]$,
such that 
\begin{equation}
\sum_{\ell=1}^{\infty}\beta_{\ell}\gamma_{\ell}=\infty\textrm{ and }\sum_{\ell=1}^{\infty}\beta_{\ell}^{2}<\infty;
\end{equation}
(II). $\left\{ \Phi_{i,k}\right\} _{i\in\mathcal{N},k\geq1}$ are
i.i.d. zero-mean random variables, there exist $\sigma_{\Phi}>0$
and $\alpha_{\Phi}>0$ such that $\mathbb{E}(\Phi_{i,k}^{2})=\sigma_{\Phi}^{2}$
and $\left|\Phi_{i,k}\right|\leq\alpha_{\Phi}$. (III). There exists
$K_{0}<\infty$ such that 
\begin{equation}
\alpha_{\Phi}\gamma_{\ell}\leq\max_{i\in\mathcal{N}}\left\{ \left|a_{i,\max}-a_{i}^{*}\right|,\left|a_{i,\min}-a_{i}^{*}\right|\right\} ,\:\forall\ell\geq K_{0}.\label{eq:small_pert}
\end{equation}
\end{assumption}

Since we have $a_{i}^{*}\in\left(a_{i,\min},a_{i,\max}\right)$, $\forall i\in\mathcal{N}$
in Assumption~\ref{Assumption:F}, such $K_{0}<\infty$ always exists
to ensure (\ref{eq:small_pert}).

Denote $\widetilde{a}_{i,k}$ as an intermediate variable. For any
active node~$i$ at time-slot~$k$, the learning algorithm is given
by 
\begin{align}
\widetilde{a}_{i,k+1} & =a_{i,k}+\widetilde{\beta}_{i,k}\Phi_{i,k}\widetilde{f}_{i,k}\left(\boldsymbol{a}_{k}+\widetilde{\boldsymbol{\gamma}}_{k}\circ\boldsymbol{\Phi}_{k},\boldsymbol{\delta}_{k},\mathbf{S}_{k}\right),\label{eq:update_i}\\
a_{i,k+1} & =\mathtt{Proj}_{i,k+1}\left(\widetilde{a}_{i,k+1}\right),\label{eq:projection}
\end{align}
in which we use the equivalent step-sizes 
\begin{equation}
\widetilde{\beta}_{i,k}=\delta_{i,k}\beta_{\ell_{i,k}},\qquad\widetilde{\gamma}_{i,k}=\delta_{i,k}\gamma_{\ell_{i,k}},
\end{equation}
where $\ell_{i,k}$ denotes the index of the step-sizes $\gamma_{\cdot}$
and $\beta_{\cdot}$ to be applied by node~$i$ at iteration~$k$
during the algorithm. In this paper, $\ell_{i,k}$ is supposed to
be generated independently and randomly by each node with 
\begin{equation}
\ell_{i,k}=\widetilde{\ell}_{i,k}+\delta_{i,k}:\textrm{ }\widetilde{\ell}_{i,k}\sim\mathcal{B}\left(k-1,q_{\mathrm{a}}\right).
\end{equation}
Notice that $\mathcal{B}$ represents Binomial distribution. We denote
$\widetilde{\boldsymbol{\beta}}_{k}=\left[\widetilde{\beta}_{1,k},\ldots,\widetilde{\beta}_{N,k}\right]^{T}$,
$\widetilde{\boldsymbol{\gamma}}_{k}=\left[\widetilde{\gamma}_{1,k},\ldots,\widetilde{\gamma}_{N,k}\right]^{T}$
and $\circ$ represents the element-wise production of two vectors.

We have to apply the projection of $\widetilde{a}_{i,k}$ as in (\ref{eq:projection}),
to ensure that the actually performed action $a_{i,k}+\widetilde{\gamma}_{i,k}\Phi_{i,k}$
belongs to the feasible set $\mathcal{A}_{i}$. The operator $\mathtt{Proj}_{i,k}$
is defined as 
\begin{align}
\mathtt{Proj}_{i,k}\left(\widetilde{a}_{i,k}\right) & =\min\left\{ \max\left\{ \widetilde{a}_{i,k},a_{i,\min}+\alpha_{\Phi}\widetilde{\gamma}_{i,k}\right\} ,\right.\nonumber \\
 & \qquad\qquad\qquad\qquad\left.a_{i,\max}-\alpha_{\Phi}\widetilde{\gamma}_{i,k}\right\} .\label{eq:projection-1}
\end{align}
Recall that $\left|\Phi_{i,k}\right|\leq\alpha_{\Phi}$ in Assumption
\ref{Assumption:para}, we have then 
\begin{equation}
a_{i,k}+\widetilde{\gamma}_{i,k}\Phi_{i,k}\in\left[a_{i,k}-\alpha_{\Phi}\widetilde{\gamma}_{i,k},a_{i,k}+\alpha_{\Phi}\widetilde{\gamma}_{i,k}\right]\subseteq\mathcal{A}_{i},
\end{equation}
which means that the actually performed action always belongs to the
feasible set.

The proposed learning algorithm is concluded in Algorithm~\ref{alg:ESSP-based-Algorithm}.
The main difference between the DOSP-S algorithm and the DOSP algorithm
in \cite{li2017distributed} comes from the network model. Since not
all nodes are active at the same time, the step-sizes $\widetilde{\beta}_{i,k}$
and $\widetilde{\gamma}_{i,k}$ are not updated simultaneously, the
analysis becomes more challenging as we will discuss in Section~\ref{subsec:Convergence-results}.

\begin{algorithm}
\caption{\label{alg:ESSP-based-Algorithm}DOSP-S for each node~$i$}
\begin{enumerate}
\item Initialize $k=1$, set step-sizes $\widetilde{\beta}_{i,k}=\delta_{i,k}\beta_{\delta_{i,k}}$
and $\widetilde{\gamma}_{i,k}=\delta_{i,k}\gamma_{\delta_{i,k}}$,
set the value $a_{i,1}$ randomly from the interval $\left[a_{i,\min}+\widetilde{\gamma}_{i,k}\alpha_{\Phi},a_{i,\max}+\widetilde{\gamma}_{i,k}\alpha_{\Phi}\right]$. 
\item If $\delta_{i,k}=1$ 
\begin{enumerate}
\item Generate a random variable $\Phi_{i,k}$, perform action with value
$\widehat{a}_{i,k}=a_{i,k}+\widetilde{\gamma}_{i,k}\Phi_{i,k}$; 
\item Estimate $\widetilde{u}_{i,k}$, broadcast this value to its active
neighbors, and receive $\widetilde{u}_{j,k}$ from active neighbors~$j\in\mathcal{I}^{\left(i,k\right)}$.
Calculate $\widetilde{f}_{i,k}$ according to (\ref{eq:f_est}), i.e.,
$\widetilde{f}_{i,k}=\widetilde{u}_{i,k}+q_{\mathrm{r}}^{-1}\sum_{j\in\mathcal{I}^{\left(i,k\right)}}\widetilde{u}_{j,k}$; 
\item Update $\widetilde{a}_{i,k+1}$ using (\ref{eq:update_i}), i.e.,
$\widetilde{a}_{i,k+1}=a_{i,k}+\widetilde{\beta}_{i,k}\Phi_{i,k}\widetilde{f}_{i,k}$. 
\end{enumerate}
\item If $\delta_{i,k}=0$, then $\widetilde{a}_{i,k+1}=a_{i,k}$. 
\item Generate $\widetilde{\ell}_{i,k+1}\sim\mathcal{B}(k,q_{\mathrm{a}})$,
set $\widetilde{\beta}_{i,k+1}=\delta_{i,k+1}\beta_{\delta_{i,k+1}+\widetilde{\ell}_{i,k+1}}$
and $\widetilde{\gamma}_{i,k+1}=\delta_{i,k+1}\gamma_{\delta_{i,k+1}+\widetilde{\ell}_{i,k+1}}$. 
\item Update $a_{i,k+1}$ using (\ref{eq:projection}), i.e., $a_{i,k+1}=\mathtt{Proj}_{i,k+1}(\widetilde{a}_{i,k+1})$. 
\item $k=k+1$, go to 2. 
\end{enumerate}
\end{algorithm}

\section{Almost Sure Convergence\label{subsec:Convergence-results}}

We investigate the convergence of Algorithm~1 in this section. We
mainly need to investigate the divergence
\begin{equation}
d_{k}=\frac{1}{N}\left\Vert \boldsymbol{a}_{k}-\boldsymbol{a}^{*}\right\Vert ^{2},\label{eq:diverg_def}
\end{equation}
which represents the distance between the actual $\boldsymbol{a}_{k}$
and the optimal point $\boldsymbol{a}^{*}$. Our aim is to prove that
$d_{k}\rightarrow0$ a.s. Compared with the original DOSP algorithm,
the main challenge of the analysis comes from the additional randomness
of the network topology, which makes the objective function completely
different and more complicated to be characterized. Moreover, the
fact that each nodes uses independent and random step-sizes also makes
the analysis challenging.

A fundamental step is to learn the relation between $d_{k+1}$ and
$d_{k}$. Similar to the analysis of stochastic approximation, we
can write (\ref{eq:update_i}) into the generalized Robbins-Monro
form \cite{kushner2012stochastic} by introducing two noise terms.
Denote $\widehat{g}_{i,k}=\widetilde{\beta}_{i,k}\Phi_{i,k}\widetilde{f}_{i,k}$
and $\overline{g}_{i,k}=\mathbb{E}_{\mathbf{S},\boldsymbol{\eta},\mathcal{I},\boldsymbol{\Phi},\boldsymbol{\delta},\boldsymbol{\ell}}(\widehat{g}_{i,k})$,
\emph{i.e}., the expected value of $\widehat{g}_{i,k}$ with respect
to (w.r.t.) $(\mathbf{S}_{k},\boldsymbol{\eta}_{k},\mathcal{I}^{\left(i,k\right)},\boldsymbol{\Phi}_{k},\boldsymbol{\delta}_{k},\boldsymbol{\ell}_{k})$,
conditioned by any $\boldsymbol{a}_{k}\in\mathcal{A}$. Rewrite (\ref{eq:update_i})
as 
\begin{align}
 & \widetilde{a}_{i,k+1}=a_{i,k}+\widehat{g}_{i,k}=a_{i,k}+\overline{g}_{i,k}+\left(\widehat{g}_{i,k}-\overline{g}_{i,k}\right)\nonumber \\
 & =a_{i,k}+\frac{\sigma_{\Phi}^{2}}{q_{\mathrm{a}}}\overline{\beta\gamma}_{i,k}\left(\frac{\partial}{\partial a_{i,k}}F\left(\boldsymbol{a}_{k}\right)+b_{i,k}\right)+e_{i,k},\label{eq:reform}
\end{align}
where we introduce 
\begin{align}
e_{i,k} & =\widehat{g}_{i,k}-\overline{g}_{i,k}.\label{eq:stoch_def}\\
b_{i,k} & =\frac{q_{\mathrm{a}}}{\sigma_{\Phi}^{2}\overline{\beta\gamma}_{k}}\overline{g}_{i,k}-\frac{\partial}{\partial a_{i,k}}F\left(\boldsymbol{a}_{k}\right);\label{eq:bias_def}\\
\overline{\beta\gamma}_{k} & =\mathbb{E}\left(\widetilde{\beta}_{i,k}\widetilde{\gamma}_{i,k}\right)=\mathbb{E}_{\boldsymbol{\delta},\boldsymbol{\ell}}\left(\delta_{i,k}\beta_{\ell_{i,k}}\gamma_{\ell_{i,k}}\right);\label{eq:step_aver}
\end{align}
Note that $e_{i,k}$ is in fact the stochastic noise indicating the
difference between the value of a single realization of $\widehat{g}_{i,k}$
and its average $\overline{g}_{i,k}$; $b_{i,k}$ represents the difference
between $\overline{g}_{i,k}$ and $\partial F/\partial a_{i,k}$.
The average step-size $\overline{\beta\gamma}_{k}$ can be evaluated
by 
\begin{align}
\overline{\beta\gamma}_{k} & =\mathbb{P}\left(\delta_{i,k}=1\right)\mathbb{E}_{\boldsymbol{\delta},\widetilde{\boldsymbol{\ell}}}\left(\delta_{i,k}\beta_{\delta_{i,k}+\widetilde{\ell}_{i,k}}\gamma_{\delta_{i,k}+\widetilde{\ell}_{i,k}}\mid\delta_{i,k}=1\right)\nonumber \\
 & =\sum_{\ell=1}^{k}\beta_{\ell}\gamma_{\ell}q_{\mathrm{a}}^{\ell}\left(1-q_{\mathrm{a}}\right)^{k-\ell}\binom{k-1}{\ell-1},\label{eq:step_aver_1}
\end{align}
which is identical for any node~$i$ at time-slot~$k$, since the
statistical property of $\delta_{i,k}$ and $\widetilde{\ell}_{i,k}$
is assumed to be same for all nodes. Similar to $\overline{\beta\gamma}_{k}$,
define the following average step-sizes that will be used in our analysis:
\begin{align}
\overline{\beta}_{k} & =\mathbb{E}_{\boldsymbol{\delta},\boldsymbol{\ell}}\left(\widetilde{\beta}_{i,k}\right),\:\overline{\gamma}_{k}=\mathbb{E}_{\boldsymbol{\delta},\boldsymbol{\ell}}\left(\widetilde{\gamma}_{i,k}\right),\:\overline{\beta^{2}}_{k}=\mathbb{E}_{\boldsymbol{\delta},\boldsymbol{\ell}}\left(\widetilde{\beta}_{i,k}^{2}\right),\nonumber \\
\overline{\gamma^{2}}_{k} & =\mathbb{E}_{\boldsymbol{\delta},\boldsymbol{\ell}}\left(\widetilde{\gamma}_{i,k}^{2}\right),\:\overline{\beta\gamma^{2}}_{k}=\mathbb{E}_{\boldsymbol{\delta},\boldsymbol{\ell}}\left(\widetilde{\beta}_{i,k}\widetilde{\gamma}_{i,k}^{2}\right)\label{eq:step_aver_def}
\end{align}
Denote $\widehat{\boldsymbol{g}}_{k}=[\widehat{g}_{1,k},\ldots,\widehat{g}_{N,k}]^{T}$,
$\overline{\boldsymbol{g}}_{k}=[\overline{g}_{1,k},\ldots,\overline{g}_{N,k},]^{T}$,
$\boldsymbol{b}_{k}=[b_{1,k},\ldots,b_{N,k}]^{T}$, $\boldsymbol{e}_{k}=[e_{1,k},\ldots,e_{N,k}]^{T}$
and $\nabla F\left(\boldsymbol{a}_{k}\right)=[\frac{\partial}{\partial a_{1}}F\left(\boldsymbol{a}_{k}\right),\ldots,\frac{\partial}{\partial a_{N}}F\left(\boldsymbol{a}_{k}\right)]^{T}$.
Then we rewrite (\ref{eq:update_i}) into $\widetilde{\boldsymbol{a}}_{k+1}=\boldsymbol{a}_{k}+\widehat{\boldsymbol{g}}_{k}$
with 
\begin{equation}
\widehat{\boldsymbol{g}}_{k}=\sigma_{\Phi}^{2}q_{\mathrm{a}}^{-1}\overline{\beta\gamma}_{k}\left(\nabla F\left(\boldsymbol{a}_{k}\right)+\boldsymbol{b}_{k}\right)+\boldsymbol{e}_{k}.\label{eq:reform_vec}
\end{equation}

Based on the above notations, we can find an upper bound of $d_{k+1}$
as a function of $d_{k}$:

\begin{myprop}\label{lem:gamma_sum-1}Introduce $\varDelta_{k}=\alpha_{\Phi}^{2}\gamma_{0}^{2}N^{-1}\sum_{i\in\mathcal{N}}\delta_{i,k}\iota_{i,k}$
with 
\begin{equation}
\iota_{i,k}=\begin{cases}
1, & \textrm{if }\widetilde{\ell}_{i,k}<K_{0}-1,\\
0, & \textrm{otherwise}.
\end{cases}\label{eq:dp}
\end{equation}
Then for any $k\geq K_{0}$, we have 
\begin{align}
d_{k+1} & \leq d_{k}+\varDelta_{k+1}+\frac{1}{N}\left\Vert \widehat{\boldsymbol{g}}_{k}\right\Vert ^{2}+\frac{2}{N}\left(\boldsymbol{a}_{k}-\boldsymbol{a}^{*}\right)^{T}\cdot\boldsymbol{e}_{k}\nonumber \\
{\color{blue}} & \:+\frac{2\sigma_{\Phi}^{2}}{q_{\mathrm{a}}N}\overline{\beta\gamma}_{k}\left(\boldsymbol{a}_{k}-\boldsymbol{a}^{*}\right)^{T}\cdot\left(\nabla F\left(\boldsymbol{a}_{k}\right)+\boldsymbol{b}_{k}\right).\label{eq:evo_d}
\end{align}
\end{myprop}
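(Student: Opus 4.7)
The plan is to first reduce $d_{k+1}$ to the squared distance of the pre-projection iterate $\widetilde{\boldsymbol{a}}_{k+1} = \boldsymbol{a}_k + \widehat{\boldsymbol{g}}_k$ to $\boldsymbol{a}^*$, and then to expand that quadratic via the decomposition (\ref{eq:reform_vec}). Throughout, let $I_{i,k+1} = [a_{i,\min} + \alpha_{\Phi}\widetilde{\gamma}_{i,k+1},\, a_{i,\max} - \alpha_{\Phi}\widetilde{\gamma}_{i,k+1}]$ denote the shrunken interval onto which $\mathtt{Proj}_{i,k+1}$ projects.

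First, I would obtain a coordinatewise bound on the effect of the projection. When $a_i^* \in I_{i,k+1}$, projection onto the convex interval is non-expansive relative to $a_i^*$, so $(a_{i,k+1} - a_i^*)^2 \leq (\widetilde{a}_{i,k+1} - a_i^*)^2$. When $a_i^* \notin I_{i,k+1}$, a case analysis on the position of $\widetilde{a}_{i,k+1}$ with respect to the endpoints shows that the only way projection enlarges the distance to $a_i^*$ is when $\widetilde{a}_{i,k+1}$ is pushed onto the endpoint of $I_{i,k+1}$ closest to $a_i^*$; since $a_i^* \in \mathcal{A}_i$ lies within $\alpha_{\Phi}\widetilde{\gamma}_{i,k+1}$ of that endpoint, this yields the uniform bound
\begin{equation*}
(a_{i,k+1} - a_i^*)^2 \;\leq\; (\widetilde{a}_{i,k+1} - a_i^*)^2 + \alpha_{\Phi}^2 \widetilde{\gamma}_{i,k+1}^2 .
\end{equation*}

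Second, I would identify exactly when the correction above is triggered. By Assumption~\ref{Assumption:para}(III), $a_i^* \notin I_{i,k+1}$ can only happen if $\ell_{i,k+1} < K_0$; together with the relation $\ell_{i,k+1} = \widetilde{\ell}_{i,k+1} + \delta_{i,k+1}$, this is precisely the event $\delta_{i,k+1}\iota_{i,k+1} = 1$. For inactive nodes we have $\widetilde{\gamma}_{i,k+1} = 0$ and the projection acts as the identity. Since $\gamma_\ell \leq \gamma_0$ for $\ell \geq 1$, summing over $i \in \mathcal{N}$ and dividing by $N$ gives
\begin{equation*}
d_{k+1} \;\leq\; \frac{1}{N}\bigl\|\widetilde{\boldsymbol{a}}_{k+1} - \boldsymbol{a}^*\bigr\|^2 + \varDelta_{k+1} .
\end{equation*}

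Finally, writing $\widetilde{\boldsymbol{a}}_{k+1} - \boldsymbol{a}^* = (\boldsymbol{a}_k - \boldsymbol{a}^*) + \widehat{\boldsymbol{g}}_k$ and squaring gives
\begin{equation*}
\bigl\|\widetilde{\boldsymbol{a}}_{k+1} - \boldsymbol{a}^*\bigr\|^2 = \bigl\|\boldsymbol{a}_k - \boldsymbol{a}^*\bigr\|^2 + 2(\boldsymbol{a}_k - \boldsymbol{a}^*)^T \widehat{\boldsymbol{g}}_k + \bigl\|\widehat{\boldsymbol{g}}_k\bigr\|^2 .
\end{equation*}
Substituting (\ref{eq:reform_vec}) into the cross term splits it into the gradient--plus--bias inner product $\frac{2\sigma_{\Phi}^2}{q_{\mathrm{a}}}\overline{\beta\gamma}_k(\boldsymbol{a}_k - \boldsymbol{a}^*)^T(\nabla F(\boldsymbol{a}_k) + \boldsymbol{b}_k)$ and the noise inner product $2(\boldsymbol{a}_k - \boldsymbol{a}^*)^T\boldsymbol{e}_k$. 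Combining with the previous display and dividing by $N$ produces (\ref{eq:evo_d}) directly. The main obstacle is the projection step: for early values of $\ell_{i,k+1}$ the shrunken interval $I_{i,k+1}$ may fail to contain $a_i^*$, so the projection can strictly move $a_{i,k+1}$ away from $a_i^*$. The indicator $\iota_{i,k+1}$ together with Assumption~\ref{Assumption:para}(III) quarantines these few problematic coordinates, and the uniform bound $\widetilde{\gamma}_{i,k+1} \leq \gamma_0$ on the event $\delta_{i,k+1} = 1$ allows the per-coordinate penalty $\alpha_{\Phi}^2 \widetilde{\gamma}_{i,k+1}^2$ to be absorbed into the single summary term $\varDelta_{k+1}$; the restriction $k \geq K_0$ simply makes the threshold $K_0 - 1$ a meaningful cutoff for the Binomial index $\widetilde{\ell}_{i,k+1}$.
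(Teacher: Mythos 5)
Your proposal is correct and follows essentially the same route as the paper: you split $d_{k+1}$ into the pre-projection squared distance plus a projection correction, bound that correction coordinatewise by a case analysis on whether $a_i^*$ lies in the shrunken interval (with the bad cases exactly the event $\delta_{i,k+1}\iota_{i,k+1}=1$, penalized by $\alpha_{\Phi}^2\widetilde{\gamma}_{i,k+1}^2\leq\alpha_{\Phi}^2\gamma_0^2$), and then expand the quadratic and substitute (\ref{eq:reform_vec}) into the cross term. This is precisely the paper's argument (Proposition~\ref{lem:gamma_sum-1} together with the inequality (\ref{eq:proj_diff}) proved in Appendix~\ref{subsec:Proof-of-inequality}), only stated at index $k+1$ directly rather than via a separate lemma at index $k$.
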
 
\begin{IEEEproof}
By definition of $d_{k}$, we have 
\begin{align}
d_{k+1} & =\frac{\left\Vert \widetilde{\boldsymbol{a}}_{k+1}-\boldsymbol{a}^{*}\right\Vert ^{2}}{N}+\frac{\left\Vert \boldsymbol{a}_{k+1}-\boldsymbol{a}^{*}\right\Vert ^{2}-\left\Vert \widetilde{\boldsymbol{a}}_{k+1}-\boldsymbol{a}^{*}\right\Vert ^{2}}{N}\nonumber \\
 & \overset{\left(a\right)}{\leq}\frac{1}{N}\left\Vert \boldsymbol{a}_{k}+\widehat{\boldsymbol{g}}_{k}-\boldsymbol{a}^{*}\right\Vert ^{2}+\varDelta_{k+1}\nonumber \\
 & =d_{k}+\frac{1}{N}\left\Vert \widehat{\boldsymbol{g}}_{k}\right\Vert ^{2}+\frac{2}{N}\left(\boldsymbol{a}_{k}-\boldsymbol{a}^{*}\right)^{T}\cdot\widehat{\boldsymbol{g}}_{k}+\varDelta_{k+1}\label{eq:evo2}
\end{align}
where $(a)$ is by the following 
\begin{equation}
\frac{\left\Vert \boldsymbol{a}_{k}-\boldsymbol{a}^{*}\right\Vert ^{2}-\left\Vert \widetilde{\boldsymbol{a}}_{k}-\boldsymbol{a}^{*}\right\Vert ^{2}}{N}\leq\varDelta_{k},\quad\forall k\geq K_{0},\label{eq:proj_diff}
\end{equation}
with the proof detail in Appendix~\ref{subsec:Proof-of-inequality}.
We get (\ref{eq:evo_d}) by substituting (\ref{eq:reform_vec}) into
(\ref{eq:evo2}), which concludes the proof. 
\end{IEEEproof}
Our next step is to show the desirable properties of $\overline{\beta\gamma}_{k}$,
$\boldsymbol{b}_{k}$, $\boldsymbol{e}_{k}$, and $\varDelta_{k}$
respectively, before our main convergence result.

\begin{myprop}\label{lem:gamma_sum}We have 
\begin{equation}
\sum_{k=1}^{\infty}\overline{\beta\gamma}_{k}=\sum_{k=1}^{\infty}\beta_{k}\gamma_{k}\rightarrow\infty,\label{eq:gamma_aver_sum}
\end{equation}
\begin{equation}
\sum_{k=1}^{\infty}\overline{\beta}_{k}^{2}\leq\sum_{k=1}^{\infty}\overline{\beta^{2}}_{k}=\sum_{k=1}^{\infty}\beta_{k}^{2}<\infty.\label{eq:gamma_aversquar}
\end{equation}
\end{myprop}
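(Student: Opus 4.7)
The plan is to prove each equality and inequality in (\ref{eq:gamma_aver_sum})--(\ref{eq:gamma_aversquar}) by unrolling the definitions in (\ref{eq:step_aver_1}) and (\ref{eq:step_aver_def}), swapping the order of summation with Tonelli's theorem (everything is non-negative), and then recognizing the inner sum as a negative-binomial series.

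First I would expand
\[
\sum_{k=1}^{\infty}\overline{\beta\gamma}_{k}
=\sum_{k=1}^{\infty}\sum_{\ell=1}^{k}\beta_{\ell}\gamma_{\ell}\,q_{\mathrm{a}}^{\ell}(1-q_{\mathrm{a}})^{k-\ell}\binom{k-1}{\ell-1}
=\sum_{\ell=1}^{\infty}\beta_{\ell}\gamma_{\ell}\,q_{\mathrm{a}}^{\ell}\sum_{k=\ell}^{\infty}(1-q_{\mathrm{a}})^{k-\ell}\binom{k-1}{\ell-1}.
\]
Setting $m=k-\ell$, the inner sum becomes $\sum_{m=0}^{\infty}\binom{m+\ell-1}{\ell-1}(1-q_{\mathrm{a}})^{m}$, which is the standard negative-binomial generating function and equals $q_{\mathrm{a}}^{-\ell}$. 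Thus the factors $q_{\mathrm{a}}^{\ell}$ cancel and
\[
\sum_{k=1}^{\infty}\overline{\beta\gamma}_{k}=\sum_{\ell=1}^{\infty}\beta_{\ell}\gamma_{\ell},
\]
which diverges by Assumption~\ref{Assumption:para}(I). This proves (\ref{eq:gamma_aver_sum}).

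The same computation, applied to $\overline{\beta^{2}}_{k}=\sum_{\ell=1}^{k}\beta_{\ell}^{2}q_{\mathrm{a}}^{\ell}(1-q_{\mathrm{a}})^{k-\ell}\binom{k-1}{\ell-1}$, yields $\sum_{k=1}^{\infty}\overline{\beta^{2}}_{k}=\sum_{\ell=1}^{\infty}\beta_{\ell}^{2}<\infty$ by Assumption~\ref{Assumption:para}(I). For the remaining inequality $\overline{\beta}_{k}^{2}\le\overline{\beta^{2}}_{k}$, I would simply invoke Jensen's inequality applied to the convex function $x\mapsto x^{2}$ under the expectation $\mathbb{E}_{\boldsymbol{\delta},\boldsymbol{\ell}}(\cdot)$ that defines $\overline{\beta}_{k}$ and $\overline{\beta^{2}}_{k}$ in (\ref{eq:step_aver_def}), so that $\sum_{k}\overline{\beta}_{k}^{2}\le\sum_{k}\overline{\beta^{2}}_{k}<\infty$, completing (\ref{eq:gamma_aversquar}).

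There is no real obstacle; the only step requiring attention is recognizing the negative-binomial identity $\sum_{m\ge 0}\binom{m+\ell-1}{\ell-1}x^{m}=(1-x)^{-\ell}$ and justifying the interchange of summations, which is immediate since all terms are non-negative. The role of the activity probability $q_{\mathrm{a}}$ is purely to reweight the convex combination that defines $\overline{\beta\gamma}_{k}$, and the identity above shows that this reweighting leaves the total sum $\sum_{k}\overline{\beta\gamma}_{k}$ invariant.
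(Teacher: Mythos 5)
Your proof is correct and follows essentially the same route as the paper: the paper's entire proof consists of citing Lemma~\ref{lem:sum_equi} (the identity $\sum_{k}\overline{x}_{k}=\sum_{k}x_{k}$ for binomially reweighted sequences, whose proof is deferred to Appendix~F of \cite{li2019matrix}) and applying it with $x_{k}=\beta_{k}\gamma_{k}$ and $x_{k}=\beta_{k}^{2}$, which is exactly the identity you establish directly via Tonelli's theorem and the negative-binomial series $\sum_{m\geq0}\binom{m+\ell-1}{\ell-1}(1-q_{\mathrm{a}})^{m}=q_{\mathrm{a}}^{-\ell}$. Your version has the minor merit of being self-contained and of making explicit the Jensen step $\overline{\beta}_{k}^{2}\leq\overline{\beta^{2}}_{k}$, which the paper leaves implicit, but the mathematical content is the same.
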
 
\begin{IEEEproof}
See Appendix~\ref{subsec:Proof-of-sum}. 
\end{IEEEproof}
Proposition~\ref{lem:gamma_sum} states that the average step-sizes
$\overline{\beta\gamma}_{k}$ and $\overline{\beta}_{k}$ inherit
the property of $\beta_{k}\gamma_{k}$ and $\beta_{k}$, which is
essential for the convergence of our DOSP-S learning algorithm.

\begin{mythm}\label{prop:bias}If all the assumptions are satisfied,
then for any node~$i$ and any time-slot $k$, we have 
\begin{align}
\left|b_{i,k}\right| & \leq\left(2\sigma_{\Phi}^{2}\right)^{-1}\alpha_{G}\alpha_{\Phi}^{3}w_{i,k}.\label{eq:bound_d-1}
\end{align}
with 
\begin{equation}
w_{i,k}=q_{\mathrm{a}}\overline{\beta\gamma}_{k}^{-1}\sum_{j_{1},j_{2}\in\mathcal{N}}\!\mathbb{E}_{\boldsymbol{\delta},\boldsymbol{\ell}}\!\left(\widetilde{\beta}_{i,k}\widetilde{\gamma}_{j_{1},k}\widetilde{\gamma}_{j_{2},k}\right).\label{eq:w}
\end{equation}
Furthermore, $w_{i,k}\rightarrow0$ as $k\rightarrow\infty$. Thus
$\left|b_{i,k}\right|\rightarrow0$.\end{mythm}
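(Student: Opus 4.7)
The plan is to unfold $\overline{g}_{i,k}=\mathbb{E}(\widetilde{\beta}_{i,k}\Phi_{i,k}\widetilde{f}_{i,k})$ by iterated expectations, identify the leading-order contribution that exactly equals $\sigma_{\Phi}^{2}q_{\mathrm{a}}^{-1}\overline{\beta\gamma}_{k}\partial F(\boldsymbol{a}_{k})/\partial a_{i}$, and control the rest via a second-order Taylor remainder. Applying Proposition~\ref{lem:aver_f_in} first replaces $\widetilde{f}_{i,k}$ by $f(\boldsymbol{a}_{k}+\widetilde{\boldsymbol{\gamma}}_{k}\circ\boldsymbol{\Phi}_{k},\boldsymbol{\delta}_{k},\mathbf{S}_{k})$ after integrating out $\mathcal{I}^{(i,k)}$ and $\boldsymbol{\eta}_{k}$, and subsequently $\mathbb{E}_{\mathbf{S}}$ turns $f$ into $G$; hence
\begin{equation*}
\overline{g}_{i,k}=\mathbb{E}_{\boldsymbol{\Phi},\boldsymbol{\delta},\boldsymbol{\ell}}\!\left(\widetilde{\beta}_{i,k}\Phi_{i,k}\,G\!\left(\boldsymbol{a}_{k}+\widetilde{\boldsymbol{\gamma}}_{k}\circ\boldsymbol{\Phi}_{k},\boldsymbol{\delta}_{k}\right)\right).
\end{equation*}

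Next I would Taylor expand $G$ in its first argument around $\boldsymbol{a}_{k}$ to second order with remainder $R_{2,k}$. Because $\Phi_{i,k}$ is zero-mean and independent of everything not depending on itself, the contribution from $G(\boldsymbol{a}_{k},\boldsymbol{\delta}_{k})$ vanishes after $\mathbb{E}_{\boldsymbol{\Phi}}$, and the linear term $\sum_{j}\widetilde{\gamma}_{j,k}\Phi_{j,k}\partial G(\boldsymbol{a}_{k},\boldsymbol{\delta}_{k})/\partial a_{j}$ collapses by the orthogonality $\mathbb{E}(\Phi_{i,k}\Phi_{j,k})=\sigma_{\Phi}^{2}$ when $j=i$ and $0$ otherwise, leaving only $\sigma_{\Phi}^{2}\widetilde{\gamma}_{i,k}\partial G(\boldsymbol{a}_{k},\boldsymbol{\delta}_{k})/\partial a_{i}$. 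Taking the outer expectation over $(\boldsymbol{\delta},\boldsymbol{\ell})$ relies on two observations: (a) $\widetilde{\beta}_{i,k}\widetilde{\gamma}_{i,k}=\delta_{i,k}\beta_{\ell_{i,k}}\gamma_{\ell_{i,k}}$ depends only on $(\delta_{i,k},\widetilde{\ell}_{i,k})$, which is independent of $\{\delta_{j,k}\}_{j\neq i}$; and (b) $\partial G(\boldsymbol{a},\boldsymbol{\delta})/\partial a_{i}=0$ whenever $\delta_{i}=0$, since $u_{i}\equiv 0$ when $\delta_{i}=0$ and the other $u_{j}$'s couple to $a_{i}$ only through $\delta_{i}$. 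Combining (a) and (b) with \eqref{eq:F_G} identifies the leading term with $\sigma_{\Phi}^{2}q_{\mathrm{a}}^{-1}\overline{\beta\gamma}_{k}\partial F(\boldsymbol{a}_{k})/\partial a_{i}$, so that
\begin{equation*}
b_{i,k}=\frac{q_{\mathrm{a}}}{\sigma_{\Phi}^{2}\overline{\beta\gamma}_{k}}\,\mathbb{E}\!\left(\widetilde{\beta}_{i,k}\Phi_{i,k}R_{2,k}\right).
\end{equation*}
Bounding $R_{2,k}$ by the uniform bound $\alpha_{G}$ on the second partials of $G$ (Assumption~\ref{Assumption:lipc}) and by $|\Phi_{\cdot,k}|\leq\alpha_{\Phi}$ (Assumption~\ref{Assumption:para}) via the triangle inequality produces exactly the stated bound $|b_{i,k}|\leq(2\sigma_{\Phi}^{2})^{-1}\alpha_{G}\alpha_{\Phi}^{3}w_{i,k}$.

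Finally, to show $w_{i,k}\to 0$ I would partition the double sum in \eqref{eq:w} into four cases according to whether $j_{1},j_{2}$ coincide with $i$. For $j_{1},j_{2}\neq i$, the cross-node independence of $(\delta_{\cdot,k},\widetilde{\ell}_{\cdot,k})$ makes the expectation factorize into products of the scalar moments $\overline{\beta}_{k},\overline{\gamma}_{k},\overline{\gamma^{2}}_{k}$ of \eqref{eq:step_aver_def}; for the cases with $j_{1}=i$ or $j_{2}=i$, conditioning on $\delta_{i,k}=1$ reduces the expectation to $q_{\mathrm{a}}\mathbb{E}_{\widetilde{\ell}}(\beta_{1+\widetilde{\ell}}\gamma_{1+\widetilde{\ell}}^{2})$ or the analogous mixed moment. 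In every case the binomial index $\ell_{i,k}$ concentrates around $q_{\mathrm{a}}k$ as $k\to\infty$, so typical values of $\gamma_{\ell_{i,k}}$ are of order $(q_{\mathrm{a}}k)^{-c_{2}}\to 0$; after normalization by $\overline{\beta\gamma}_{k}$, each of the four pieces vanishes. The main obstacle will be controlling the $N^{2}$ contributions in the large-network regime $N\to\infty$ with $Nq_{\mathrm{a}}=\lambda$ fixed: the factors $q_{\mathrm{a}}^{2}=\lambda^{2}/N^{2}$ arising in the $j_{1},j_{2}\neq i$ cases must be matched carefully against the combinatorial $N^{2}$, and it is only after expressing the bound in terms of the intensity $\lambda$ rather than $N$ alone that the vanishing of $w_{i,k}$ becomes transparent.
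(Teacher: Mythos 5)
You have reproduced the paper's own argument essentially step for step: your first part is the same chain of iterated expectations (Proposition~\ref{lem:aver_f_in} to remove $\mathcal{I}^{(i,k)}$ and $\boldsymbol{\eta}_{k}$, then $\mathbb{E}_{\mathbf{S}}$ turning $f$ into $G$) followed by a second-order Taylor/mean-value expansion in which the zero-mean perturbation kills the constant term, the orthogonality $\mathbb{E}(\Phi_{i,k}\Phi_{j,k})=0$ for $j\neq i$ isolates $\sigma_{\Phi}^{2}\widetilde{\gamma}_{i,k}\partial G/\partial a_{i}$, and the identity $\partial F/\partial a_{i}=\mathbb{E}_{\boldsymbol{\delta}}\left(\partial G/\partial a_{i}\right)$ together with $\partial G/\partial a_{i}=0$ when $\delta_{i}=0$ yields the factor $\sigma_{\Phi}^{2}q_{\mathrm{a}}^{-1}\overline{\beta\gamma}_{k}$, the remainder then giving exactly (\ref{eq:bound_d-1}); your second part is also the paper's route, since the four-case decomposition with cross-node factorization is precisely (\ref{eq:step_aver_sum}), your ``concentration of the binomial index'' is exactly the Chernoff bound of Lemma~\ref{lem:bound_chern}, and your $\lambda$-versus-$N^{2}$ bookkeeping mirrors (\ref{eq:w_bound1})--(\ref{eq:w_bound2}). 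The only ingredient you leave implicit is a lower bound on the denominator $\overline{\beta\gamma}_{k}$ (the paper obtains one via Jensen's inequality in (\ref{eq:low_step})), but this is a routine step since $\beta_{\ell}\gamma_{\ell}$ is decreasing and convex, so it does not constitute a gap.
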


\begin{proof}See Appendix~\ref{subsec:Proof-of-bias}. The proof
of (\ref{eq:bound_d-1}) is mainly by the application of Taylor's
theorem and the mean value theorem. We can see that the estimation
bias of gradient $b_{i,k}$ comes from the second order term of the
objective function. The value of $\left|b_{i,k}\right|$ can be bounded
as $\mid\frac{\partial^{2}G}{\partial a_{i}\partial a_{j}}\mid$ is
bounded by Assumption~\ref{Assumption:lipc}. The proof of $w_{i,k}\rightarrow0$
is challenging, we have used Chernoff's bound to show that $\mathbb{E}_{\boldsymbol{\delta},\boldsymbol{\ell}}(\widetilde{\beta}_{i,k}\widetilde{\gamma}_{j_{1},k}\widetilde{\gamma}_{j_{2},k})$
decreases much faster than $\overline{\beta\gamma}_{k}$.\end{proof}

\begin{remark}In the case where nodes are always active, we get $w_{i,k}=N^{2}\gamma_{k}$
in our previous work \cite{li2017distributed}. We can directly have
$w_{i,k}\rightarrow0$ as $\gamma_{k}\rightarrow0$, given that $N<\infty$.
While in this paper, the analysis of $w_{i,k}$ is much more complicated
due to the asynchronous feature of the algorithm: each node maintains
a random and individual step-size $\widetilde{\gamma}_{i,k}$. In
(\ref{eq:w}), $w_{i,k}$ has complicated form of which the closed
expression are hard to derive. \end{remark}

The property of $\boldsymbol{e}_{k}$ is stated as follows:

\begin{myprop}\label{lem:stochastic_noise}If all the assumptions
are satisfied, then we have $N^{-1}\left|\sum_{k=1}^{\infty}\left(\boldsymbol{a}_{k}-\boldsymbol{a}^{*}\right)^{T}\cdot\boldsymbol{e}_{k}\right|<\infty$
a.s.\end{myprop}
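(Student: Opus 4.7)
The plan is to recognise $S_K := N^{-1}\sum_{k=1}^{K}(\boldsymbol{a}_k-\boldsymbol{a}^*)^T\boldsymbol{e}_k$ as a square-integrable martingale and invoke Doob's $L^2$-bounded martingale convergence theorem, from which the almost sure finiteness of the infinite sum will follow immediately. Let $\mathcal{F}_k$ be the $\sigma$-algebra generated by $\boldsymbol{a}_1,\dots,\boldsymbol{a}_k$ together with the step-size randomness $\{\widetilde{\beta}_{i,j},\widetilde{\gamma}_{i,j}\}_{i\in\mathcal{N},\,j\le k}$ that has already been consumed to form $\boldsymbol{a}_j$ through the projection (\ref{eq:projection-1}). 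Because $\overline{\boldsymbol{g}}_k = \mathbb{E}(\widehat{\boldsymbol{g}}_k\mid\boldsymbol{a}_k)$ is exactly the conditional expectation over the iteration-$k$ innovations $(\mathbf{S}_k,\boldsymbol{\eta}_k,\mathcal{I}^{(\cdot,k)},\boldsymbol{\Phi}_k,\boldsymbol{\delta}_k,\boldsymbol{\ell}_k)$, we obtain $\mathbb{E}(\boldsymbol{e}_k\mid\mathcal{F}_k)=0$. Combined with the $\mathcal{F}_k$-measurability of $\boldsymbol{a}_k-\boldsymbol{a}^*$, this shows that $X_k := N^{-1}(\boldsymbol{a}_k-\boldsymbol{a}^*)^T\boldsymbol{e}_k$ is a martingale-difference sequence and $S_K$ is an $\mathcal{F}_{K+1}$-martingale.

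The next step is to control the quadratic variation. By Cauchy--Schwarz and the deterministic projection bound $\|\boldsymbol{a}_k-\boldsymbol{a}^*\|^2\le 4N\sigma_{\boldsymbol{a}}^2$ afforded by (\ref{eq:amax-1}) and (\ref{eq:projection-1}), we have $\mathbb{E}(X_k^2\mid\mathcal{F}_k)\le 4\sigma_{\boldsymbol{a}}^2 N^{-1}\mathbb{E}(\|\boldsymbol{e}_k\|^2\mid\mathcal{F}_k)$. The variance-is-smaller-than-second-moment inequality reduces the task to bounding $\mathbb{E}(\widehat{g}_{i,k}^{\,2}\mid\mathcal{F}_k)$. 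Using $\widehat{g}_{i,k}=\widetilde{\beta}_{i,k}\Phi_{i,k}\widetilde{f}_{i,k}$, together with $|\Phi_{i,k}|\le\alpha_{\Phi}$ from Assumption~\ref{Assumption:para}, $|\mathcal{I}^{(i,k)}|\le N-1$, the bounded-variance noise of Assumption~\ref{Assumption:add_noise}, and a uniform-in-$k$ bound on $\mathbb{E}(\widetilde{u}_{j,k}^{\,2})$ obtained by anchoring the Lipschitz inequality (\ref{eq:lipsc}) at $\boldsymbol{a}^*$ and invoking $L=\sqrt{\mathbb{E}(L_{\mathbf{S}}^2)}<\infty$, I would produce a finite constant $C$ independent of $k$ such that $\mathbb{E}(\widehat{g}_{i,k}^{\,2}\mid\mathcal{F}_k)\le C\,\mathbb{E}(\widetilde{\beta}_{i,k}^{\,2}\mid\mathcal{F}_k)$. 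Taking total expectation and summing over $i$ gives $\mathbb{E}(\|\boldsymbol{e}_k\|^2)\le CN\,\overline{\beta^2}_k$.

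Plugging this into the bound for $\mathbb{E}(X_k^2)$ and summing yields $\sum_{k=1}^\infty\mathbb{E}(X_k^2)\le 4\sigma_{\boldsymbol{a}}^2 C\sum_{k=1}^\infty\overline{\beta^2}_k<\infty$ by (\ref{eq:gamma_aversquar}) of Proposition~\ref{lem:gamma_sum}. Orthogonality of martingale differences then gives $\sup_K\mathbb{E}(S_K^2)<\infty$, so Doob's $L^2$-bounded martingale convergence theorem forces $S_K\to S_\infty$ almost surely with $|S_\infty|<\infty$ a.s., which is exactly the conclusion. The main obstacle will be the uniform-in-$k$ second-moment bound on $\widetilde{f}_{i,k}$: although $\mathcal{A}$ is compact, the Lipschitz hypothesis only yields linear growth of $u_i$, so one has to anchor the bound at a fixed reference point and then propagate it through the observation noise and through the random reception set $\mathcal{I}^{(i,k)}$, while carefully exploiting the conditional independence of $\widetilde{\beta}_{i,k}$ (driven by $\widetilde{\ell}_{i,k}$) from the innovations entering $\widetilde{f}_{i,k}$ given $\delta_{i,k}=1$. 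Once that moment estimate is secured, the remainder is a clean application of the martingale method.
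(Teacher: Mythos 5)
Your proposal follows essentially the same route as the paper's own proof: the paper likewise identifies $\sum_{k}\left(\boldsymbol{a}_{k}-\boldsymbol{a}^{*}\right)^{T}\cdot\boldsymbol{e}_{k}$ as a martingale (zero conditional mean of $\boldsymbol{e}_{k}$ plus independence of the per-slot innovations), uses Cauchy--Schwarz with the compactness of $\mathcal{A}$, proves the same second-moment estimate $\mathbb{E}(\|\widehat{\boldsymbol{g}}_{k}\|^{2})\leq NC'\overline{\beta^{2}}_{k}$ via the Lipschitz assumption (its Lemma~\ref{lem:bound_g}), invokes $\sum_{k}\overline{\beta^{2}}_{k}<\infty$ from Proposition~\ref{lem:gamma_sum}, and closes with Doob --- the paper through the maximal inequality applied to tail sums, you through the $L^{2}$-bounded martingale convergence theorem, which is an equivalent standard tool here. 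One detail you should correct: the uniform bound on $\mathbb{E}(\widetilde{u}_{j,k}^{2})$ cannot be obtained by anchoring (\ref{eq:lipsc}) at $\boldsymbol{a}^{*}$, because no assumption controls $\mathbb{E}(u_{j}(\boldsymbol{a}^{*},\boldsymbol{\delta},\mathbf{S})^{2})$; the anchor must be $\boldsymbol{a}=\mathbf{0}$, where the paper uses $u_{j}\left(\mathbf{0},\boldsymbol{\delta}_{k},\mathbf{S}_{k}\right)=u_{j}\left(\mathbf{0},\mathbf{0},\mathbf{S}_{k}\right)=0$ (a zero action vector is identified with no activity), after which your argument goes through exactly as sketched.
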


\begin{proof}See Appendix~\ref{subsec:Proof-sketch-of_e}. The proof
is by applying Doob's martingale inequality, which is a suitable tool
in the framework of stochastic approximation.\end{proof}

The property of $\varDelta_{k}$ is similar to $\boldsymbol{e}_{k}$:

\begin{myprop}\label{lem:bias_projection} There exist bounded constants
$K_{1}\geq K_{0}$ and $\widetilde{C}>0$, such that $\mathbb{E}(\varDelta_{k})\leq\widetilde{C}\overline{\beta^{2}}_{k-1}$
for any $K\geq K_{1}$. Meanwhile, we have $\left|\sum_{k=K_{1}}^{\infty}\varDelta_{k}\right|<\infty$
a.s.\end{myprop}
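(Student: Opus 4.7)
The plan is to exploit the fact that $\varDelta_k$ is supported only on the rare event $\{\widetilde{\ell}_{i,k} < K_0-1\}$, which becomes exponentially improbable as $k$ grows, while $\overline{\beta^2}_{k-1}$ decays only polynomially.

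First I would compute $\mathbb{E}(\varDelta_k)$ directly. Because $\widetilde{\ell}_{i,k}$ is generated independently of $\delta_{i,k}$ (see the description of $\ell_{i,k}$ in Section~\ref{subsec:Algorithm}) and the joint law of $(\delta_{i,k},\widetilde{\ell}_{i,k})$ is the same for every $i$, symmetry gives
\begin{equation}
\mathbb{E}(\varDelta_k)=\alpha_{\Phi}^{2}\gamma_{0}^{2}\,q_{\mathrm{a}}\,\mathbb{P}\bigl(\widetilde{\ell}_{1,k}<K_{0}-1\bigr).
\end{equation}
Since $\widetilde{\ell}_{1,k}\sim\mathcal{B}(k-1,q_{\mathrm{a}})$ has mean $(k-1)q_{\mathrm{a}}$, which for $k$ large is far above the constant threshold $K_0-1$, a Chernoff bound for the lower tail of a binomial yields
\begin{equation}
\mathbb{P}\bigl(\widetilde{\ell}_{1,k}<K_{0}-1\bigr)\leq \exp\bigl(-c_{0}(k-1)\bigr)
\end{equation}
for some constant $c_0>0$ depending on $q_{\mathrm{a}}$ and $K_0$, valid for all $k$ large enough.

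Next I would compare this exponential decay with $\overline{\beta^2}_{k-1}$. From the definitions, $\overline{\beta^2}_{k-1}=\mathbb{E}_{\boldsymbol{\delta},\boldsymbol{\ell}}(\widetilde{\beta}_{i,k-1}^{2})=q_{\mathrm{a}}\sum_{\ell=1}^{k-1}\beta_{\ell}^{2}\binom{k-2}{\ell-1}q_{\mathrm{a}}^{\ell-1}(1-q_{\mathrm{a}})^{k-1-\ell}$, which is at least $q_{\mathrm{a}}\beta_{\lfloor (k-1)q_{\mathrm{a}}\rfloor}^{2}\cdot(\text{const})$ for large $k$, so $\overline{\beta^2}_{k-1}$ decays only polynomially in $k$ (of order $k^{-2c_{1}}$). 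Consequently, $\exp(-c_{0}(k-1))/\overline{\beta^2}_{k-1}\to 0$, and there exist $K_{1}\geq K_{0}$ and $\widetilde{C}>0$ such that $\mathbb{E}(\varDelta_k)\leq \widetilde{C}\,\overline{\beta^2}_{k-1}$ for every $k\geq K_{1}$, which is the first claim.

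Finally, for the almost sure summability I would use nonnegativity of $\varDelta_k$ together with the first part. By Tonelli,
\begin{equation}
\mathbb{E}\!\left(\sum_{k=K_{1}}^{\infty}\varDelta_{k}\right)=\sum_{k=K_{1}}^{\infty}\mathbb{E}(\varDelta_{k})\leq \widetilde{C}\sum_{k=K_{1}}^{\infty}\overline{\beta^2}_{k-1}=\widetilde{C}\sum_{k\geq K_{1}-1}\beta_{k}^{2}<\infty,
\end{equation}
where the equality in the middle uses Proposition~\ref{lem:gamma_sum} and the final bound uses Assumption~\ref{Assumption:para}. A random variable with finite expectation is finite almost surely, so $\sum_{k=K_{1}}^{\infty}\varDelta_{k}<\infty$ a.s., proving the second claim.

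The main obstacle I anticipate is the first step: justifying cleanly that the Chernoff tail of $\widetilde{\ell}_{i,k}$ dominates the polynomial step-size $\overline{\beta^2}_{k-1}$ and extracting an explicit constant $\widetilde{C}$ uniform in $k\geq K_{1}$. Everything else is then either symmetry in $i$ (which removes the sum and the $1/N$ factor cleanly) or a standard Tonelli/nonnegativity argument.
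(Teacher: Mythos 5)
Your proposal is correct. For the first claim you follow essentially the paper's own route: factor $\mathbb{E}(\varDelta_k)$ into $\alpha_{\Phi}^{2}\gamma_{0}^{2}q_{\mathrm{a}}\,\mathbb{P}(\widetilde{\ell}_{i,k}<K_{0}-1)$, kill the binomial lower tail with a Chernoff bound, and compare against a polynomial lower bound on $\overline{\beta^{2}}_{k-1}$; the only cosmetic difference is that the paper obtains that lower bound via Jensen's inequality applied to the convex map $\ell\mapsto\beta_{\ell}^{2}$, giving $\overline{\beta^{2}}_{k-1}\geq q_{\mathrm{a}}\beta_{1+q_{\mathrm{a}}(k-2)}^{2}$, whereas you invoke concentration of the binomial around its mean --- both yield the needed $k^{-2c_{1}}$ order. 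Where you genuinely diverge is the second claim. The paper centers the terms, sets $e_{k}'=\varDelta_{k}-\mathbb{E}(\varDelta_{k})$, observes that the partial sums of $e_{k}'$ form a martingale with bounded increments, and applies Doob's martingale inequality (mirroring the proof of Proposition~\ref{lem:stochastic_noise}) before recombining $\sum_{k}\varDelta_{k}=\sum_{k}\mathbb{E}(\varDelta_{k})+\sum_{k}e_{k}'$. You instead exploit nonnegativity of $\varDelta_{k}$ and Tonelli: a nonnegative random variable with finite expectation is finite almost surely. Your argument is shorter, more elementary, and fully sufficient, since the downstream uses of this proposition (Theorem~\ref{prop:converge_rp} and Lemma~\ref{lem:inequality_D}) only require a.s.\ finiteness of the tail sum and the expectation bound; the martingale machinery buys nothing extra here. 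One small inaccuracy to fix: your middle equality $\sum_{k=K_{1}}^{\infty}\overline{\beta^{2}}_{k-1}=\sum_{k\geq K_{1}-1}\beta_{k}^{2}$ does not hold for tail sums --- Lemma~\ref{lem:sum_equi} (used in Proposition~\ref{lem:gamma_sum}) equates only the \emph{full} sums starting at $k=1$, not term by term. Replace it by the inequality $\sum_{k=K_{1}}^{\infty}\overline{\beta^{2}}_{k-1}\leq\sum_{k=1}^{\infty}\overline{\beta^{2}}_{k}=\sum_{k=1}^{\infty}\beta_{k}^{2}<\infty$, which is all your argument needs.
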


\begin{proof}See Appendix~\ref{subsec:Proof-proj}.\end{proof}

Based on all the above results, we can finally prove the a.s. convergence
of our DOSP-S algorithm.

\begin{mythm}\label{prop:converge_rp}If all the assumptions are
satisfied, then $\boldsymbol{a}_{k}\rightarrow\boldsymbol{a}^{*}$
as $k\rightarrow\infty$ almost surely by applying Algorithm~1. \end{mythm}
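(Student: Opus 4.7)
The plan is to apply a Robbins--Siegmund-type almost-supermartingale argument to the recursion in Proposition~\ref{lem:gamma_sum-1}. Rewriting that bound, I separate a non-positive ``drift'' term from zero-mean or summable ``remainder'' terms:
\begin{align*}
d_{k+1} & \leq d_{k} - \frac{2\sigma_{\Phi}^{2}}{q_{\mathrm{a}}N}\,\overline{\beta\gamma}_{k}\,(\boldsymbol{a}^{*}-\boldsymbol{a}_{k})^{T}\nabla F(\boldsymbol{a}_{k}) + \varDelta_{k+1} \\
& \quad + \frac{1}{N}\|\widehat{\boldsymbol{g}}_{k}\|^{2} + \frac{2}{N}(\boldsymbol{a}_{k}-\boldsymbol{a}^{*})^{T}\boldsymbol{e}_{k} + \frac{2\sigma_{\Phi}^{2}}{q_{\mathrm{a}}N}\,\overline{\beta\gamma}_{k}\,(\boldsymbol{a}_{k}-\boldsymbol{a}^{*})^{T}\boldsymbol{b}_{k}.
\end{align*}
Assumption~\ref{Assumption:F} (strict concavity of $F$ together with $\nabla F(\boldsymbol{a}^{*})=0$) makes $-(\boldsymbol{a}^{*}-\boldsymbol{a}_{k})^{T}\nabla F(\boldsymbol{a}_{k})\leq 0$, so this is the ``dissipation'' term that drives convergence.

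Next I would verify that every remainder term has finite sum almost surely. The projection term $\sum_k \varDelta_{k+1}$ is finite a.s.\ by Proposition~\ref{lem:bias_projection}. The centered noise $N^{-1}\sum_k (\boldsymbol{a}_{k}-\boldsymbol{a}^{*})^{T}\boldsymbol{e}_{k}$ is finite a.s.\ by Proposition~\ref{lem:stochastic_noise}. For the quadratic term, $|\Phi_{i,k}|\leq\alpha_{\Phi}$, the boundedness of $\boldsymbol{a}_{k}\in\mathcal{A}$, the Lipschitz control of $u_{i}$ from Assumption~\ref{Assumption:lipc}, and the bounded variance of $\eta_{i,k}$ from Assumption~\ref{Assumption:add_noise} together yield $\mathbb{E}[\|\widehat{\boldsymbol{g}}_{k}\|^{2}]\leq C N\,\overline{\beta^{2}}_{k}$, and then Proposition~\ref{lem:gamma_sum} gives $\sum_{k}\overline{\beta^{2}}_{k}<\infty$, which transfers to a.s.\ summability of $\|\widehat{\boldsymbol{g}}_{k}\|^{2}/N$. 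For the bias term, the compactness of $\mathcal{A}$ provides a deterministic bound on $\|\boldsymbol{a}_{k}-\boldsymbol{a}^{*}\|$, and combining with Theorem~\ref{prop:bias} reduces the task to showing that $\sum_{k}\overline{\beta\gamma}_{k}\,w_{i,k}<\infty$; via the explicit formula \eqref{eq:w} for $w_{i,k}$ and the Chernoff-style concentration built into the proof of Theorem~\ref{prop:bias}, the cross moments $\mathbb{E}[\widetilde{\beta}_{i,k}\widetilde{\gamma}_{j_{1},k}\widetilde{\gamma}_{j_{2},k}]$ decay fast enough in $k$ for this summability to hold.

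With all remainders a.s.\ summable and the drift non-positive, I would then invoke the Robbins--Siegmund almost-supermartingale theorem on $X_{k}=d_{k}$ to conclude that $d_{k}$ converges a.s.\ to a finite non-negative limit $d_{\infty}$ and that $\sum_{k}\overline{\beta\gamma}_{k}\,(\boldsymbol{a}^{*}-\boldsymbol{a}_{k})^{T}\nabla F(\boldsymbol{a}_{k})<\infty$ a.s. Because $\sum_{k}\overline{\beta\gamma}_{k}=\infty$ by Proposition~\ref{lem:gamma_sum}, there must exist a subsequence $\{k_{n}\}$ along which $(\boldsymbol{a}^{*}-\boldsymbol{a}_{k_{n}})^{T}\nabla F(\boldsymbol{a}_{k_{n}})\to 0$. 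Strict concavity of $F$ together with the interior stationarity condition from Assumption~\ref{Assumption:F} then forces any limit point of such a subsequence to coincide with $\boldsymbol{a}^{*}$, so $\liminf_{k}d_{k}=0$. Since $d_{k}$ converges a.s.\ to $d_{\infty}$, we conclude $d_{\infty}=0$, i.e., $\boldsymbol{a}_{k}\to\boldsymbol{a}^{*}$ almost surely.

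The main technical obstacle I foresee is the a.s.\ summability of the weighted bias contribution. In the synchronous setup of \cite{li2017distributed} the step sizes are deterministic and identical across nodes, so the bias bound was elementary; here the per-node step sizes $\widetilde{\beta}_{i,k},\widetilde{\gamma}_{i,k}$ are independent and binomially indexed, which makes $w_{i,k}$ a sum of mixed cross moments of random step sizes. Turning the qualitative statement $w_{i,k}\to 0$ of Theorem~\ref{prop:bias} into the \emph{quantitative} summability statement required by the Robbins--Siegmund step is where most of the new technical work will sit, and it is the place where the concentration arguments of Theorem~\ref{prop:bias} have to be leveraged most carefully.
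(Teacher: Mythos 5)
Your decomposition of the recursion from Proposition~\ref{lem:gamma_sum-1}, your handling of $\varDelta_{k}$, $\boldsymbol{e}_{k}$ and $N^{-1}\|\widehat{\boldsymbol{g}}_{k}\|^{2}$ (via Propositions~\ref{lem:bias_projection}, \ref{lem:stochastic_noise} and the bound $\mathbb{E}\|\widehat{\boldsymbol{g}}_{k}\|^{2}\leq NC'\overline{\beta^{2}}_{k}$ with $\sum_{k}\overline{\beta^{2}}_{k}<\infty$), and your final use of $\sum_{k}\overline{\beta\gamma}_{k}=\infty$ plus strict concavity all match the paper. The genuine gap is exactly at the step you flag as the main obstacle: your Robbins--Siegmund argument needs the bias to be an \emph{additively summable} perturbation, i.e., $\sum_{k}\overline{\beta\gamma}_{k}\,w_{i,k}<\infty$, and this is false in general under Assumption~\ref{Assumption:para}. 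Indeed, from (\ref{eq:w_bound1}) one has $\overline{\beta\gamma}_{k}w_{i,k}\leq q_{\mathrm{a}}\psi_{k}$, and the asymptotics established in Lemma~\ref{lem:bounds_collect} together with the Jensen lower bound (\ref{eq:lower_jenson}) show this order is tight: the dominant cross term $(N-1)(N-2)\overline{\beta}_{k}\overline{\gamma}_{k}^{2}$ already gives $w_{i,k}$ of order $(q_{\mathrm{a}}k)^{-c_{2}}$, hence $\overline{\beta\gamma}_{k}w_{i,k}$ of order $(q_{\mathrm{a}}k)^{-(c_{1}+2c_{2})}$. Assumption~\ref{Assumption:para} only requires $c_{1}\in(0.5,1)$ and $c_{2}\in(0,1-c_{1}]$, so for instance $c_{1}=0.75$, $c_{2}=0.1$ gives $c_{1}+2c_{2}=0.95<1$ and the series diverges. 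Sharper concentration cannot rescue this: the obstruction is the polynomial decay rate of the cross moments themselves, not looseness in the Chernoff bounds, so the quantitative summability you defer to "most of the new technical work" simply is not there for a nontrivial part of the admissible parameter range.

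The paper avoids this by treating the bias \emph{multiplicatively} rather than additively. Since $|b_{i,k}|\rightarrow0$ uniformly in $\boldsymbol{a}_{k}$ (the bound in Theorem~\ref{prop:bias} depends only on step sizes), whenever $\boldsymbol{a}_{k}$ stays outside a neighborhood of $\boldsymbol{a}^{*}$ the quantity $|(\boldsymbol{a}_{k}-\boldsymbol{a}^{*})^{T}\nabla F(\boldsymbol{a}_{k})|$ is bounded away from zero (strict concavity plus compactness of $\mathcal{A}$), and one eventually gets $(\boldsymbol{a}_{k}-\boldsymbol{a}^{*})^{T}(\nabla F(\boldsymbol{a}_{k})+\boldsymbol{b}_{k})\leq(1-\varepsilon)(\boldsymbol{a}_{k}-\boldsymbol{a}^{*})^{T}\nabla F(\boldsymbol{a}_{k})\leq0$, as in (\ref{eq:c3}). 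The drift term thus absorbs the bias, and the classical Robbins--Monro contradiction argument closes the proof: non-convergence plus $\sum_{k}\overline{\beta\gamma}_{k}=\infty$ would force $d_{K+1}\rightarrow-\infty$, contradicting $d_{k}\geq0$. You can repair your proposal either by replacing the additive-summability requirement with this relative-domination argument, or by restricting the step-size exponents to $c_{1}+2c_{2}>1$ --- but the latter proves the theorem only for a strict subset of the parameters allowed by Assumption~\ref{Assumption:para}.
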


\begin{proof}See Appendix \ref{subsec:Proof-sketch-of_th}. Based
on (\ref{eq:evo_d}) and our results that $N^{-1}\left|\sum_{k=1}^{\infty}\left(\boldsymbol{a}_{k}-\boldsymbol{a}^{*}\right)^{T}\cdot\boldsymbol{e}_{k}\right|<\infty$
a.s. and $\left|\sum_{k=K_{1}}^{\infty}\varDelta_{k}\right|<\infty$
a.s., we can get $\lim_{k\rightarrow\infty}(\nabla F\left(\boldsymbol{a}_{k}\right)+\boldsymbol{b}_{k})=\mathbf{0}$
a.s. with basic steps as in the framework of stochastic approximation.
Meanwhile, we have shown that $\lim_{k\rightarrow\infty}\left\Vert \boldsymbol{b}_{k}\right\Vert \rightarrow0$
in Theorem~\ref{prop:bias}. Thus, $\nabla F\left(\boldsymbol{a}_{k}\right)\rightarrow\mathbf{0}$
a.s. when $k\rightarrow\infty$. \end{proof}

\section{\label{sec:Convergence-Rate}Convergence Rate}

In this section, we investigate the speed of convergence to optimum
of the proposed learning algorithm. Specifically, we derive an upper
bound of the average divergence
\begin{equation}
D_{k}=N^{-1}\mathbb{E}\left(\left\Vert \boldsymbol{a}_{k}-\boldsymbol{a}^{*}\right\Vert ^{2}\right).\label{eq:diverg_aver}
\end{equation}
Note that the expectation is taken over all the random terms including
$\boldsymbol{a}_{k}$. An additional assumption is made as follows,
which is a common setting in the analysis of the convergence rate
\cite{nemirovski2009robust}.

\begin{assumption}\label{Assumption:strong_concave} $F\left(\boldsymbol{a}\right)$
is strongly concave, there exists $\alpha_{F}>0$ such that 
\begin{equation}
\left(\boldsymbol{a}-\boldsymbol{a}^{*}\right)^{T}\nabla F\left(\boldsymbol{a}_{k}\right)\leq-\alpha_{F}\left\Vert \boldsymbol{a}-\boldsymbol{a}^{*}\right\Vert _{2}^{2},\quad\forall\boldsymbol{a}\in\mathcal{A}.\label{eq:order1_2}
\end{equation}

\end{assumption}

As a starting point, we need to find the recurrence relation between
$D_{k+1}$ and $D_{k}$. 
\begin{lem}
\label{lem:inequality_D}Under Assumptions 1-7, $D_{k+1}$ is upper
bounded by a function of $D_{k}$ as $k\geq K_{1}$, i.e., 
\begin{align}
 & D_{k+1}\leq\left(1-A\theta_{k}\right)D_{k}+B\psi_{k}\sqrt{D_{k}}+C\upsilon_{k},\label{eq:dk+1_dk}
\end{align}
with bounded constants $A=2\sigma_{\Phi}^{2}\alpha_{F}$, $B=\alpha_{G}\alpha_{\Phi}^{3}$,
$C=\widetilde{C}+(1+q_{\mathrm{r}}^{-1})\lambda\sigma_{\Phi}^{2}\sigma_{\eta}^{2}+(1+(2q_{\mathrm{r}}^{-1}+5)\lambda+(q_{\mathrm{r}}^{-1}+5)\lambda^{2}+\lambda^{3}L^{2})\sigma_{\Phi}^{2}\sigma_{\boldsymbol{a}}^{2}$
and vanishing sequences 
\begin{equation}
\theta_{k}=q_{\mathrm{a}}^{-1}\overline{\beta\gamma}_{k},\qquad\upsilon_{k}=\overline{\beta^{2}}_{k},\label{eq:para_others}
\end{equation}
\begin{equation}
\psi_{k}=2N\overline{\beta\gamma}_{k}\overline{\gamma}_{k}+\overline{\beta\gamma^{2}}_{k}+\left(N-1\right)^{2}\overline{\beta}_{k}\overline{\gamma^{2}}_{k}.\label{eq:PHI_def}
\end{equation}
\end{lem}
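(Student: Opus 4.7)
The plan is to start from the per-iteration inequality (\ref{eq:evo_d}) of Proposition~\ref{lem:gamma_sum-1}, take the full expectation, and control each of its four random terms. After expectation, the stochastic-noise term $\tfrac{2}{N}\mathbb{E}[(\boldsymbol{a}_k-\boldsymbol{a}^*)^T\boldsymbol{e}_k]$ vanishes because $\boldsymbol{e}_k = \widehat{\boldsymbol{g}}_k - \overline{\boldsymbol{g}}_k$ is, by the very definition of $\overline{\boldsymbol{g}}_k$ as the conditional expectation given $\boldsymbol{a}_k$, a martingale increment, so the tower property forces the mean to be zero. Proposition~\ref{lem:bias_projection} directly hands me $\mathbb{E}(\Delta_{k+1}) \leq \widetilde{C}\,\overline{\beta^2}_k = \widetilde{C}\,\upsilon_k$, which is one piece of the $C\upsilon_k$ term. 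What remains is to treat the curvature term, the bias term, and the second-moment term $\tfrac{1}{N}\mathbb{E}\|\widehat{\boldsymbol{g}}_k\|^2$.

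For the curvature piece, Assumption~\ref{Assumption:strong_concave} gives $(\boldsymbol{a}_k-\boldsymbol{a}^*)^T\nabla F(\boldsymbol{a}_k) \leq -\alpha_F\|\boldsymbol{a}_k-\boldsymbol{a}^*\|^2$; multiplying by the prefactor $\tfrac{2\sigma_\Phi^2}{q_a N}\overline{\beta\gamma}_k$ and taking expectation produces $-2\sigma_\Phi^2\alpha_F q_a^{-1}\overline{\beta\gamma}_k D_k = -A\theta_k D_k$, which combined with the baseline $D_k$ yields the desired $(1 - A\theta_k)D_k$ with $A = 2\sigma_\Phi^2\alpha_F$. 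For the bias term, I combine Cauchy--Schwarz and Jensen into
\[
\mathbb{E}\bigl|(\boldsymbol{a}_k-\boldsymbol{a}^*)^T\boldsymbol{b}_k\bigr| \leq \sqrt{N D_k}\,\sqrt{\mathbb{E}\|\boldsymbol{b}_k\|^2},
\]
and invoke Theorem~\ref{prop:bias} together with the symmetry of $w_{i,k}$ across $i$ to obtain $\sqrt{\mathbb{E}\|\boldsymbol{b}_k\|^2} \leq (2\sigma_\Phi^2)^{-1}\alpha_G\alpha_\Phi^3\sqrt{N}\,w_{i,k}$. Multiplying by the prefactor $\tfrac{2\sigma_\Phi^2}{q_a N}\overline{\beta\gamma}_k$ and using the definition (\ref{eq:w}) of $w_{i,k}$ collapses everything into $\alpha_G\alpha_\Phi^3\sum_{j_1,j_2}\mathbb{E}(\widetilde{\beta}_{i,k}\widetilde{\gamma}_{j_1,k}\widetilde{\gamma}_{j_2,k})\sqrt{D_k}$. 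Splitting the double sum according to whether $j_1$ and $j_2$ coincide with $i$, and exploiting independence of $(\delta_{j,k},\widetilde{\ell}_{j,k})$ across $j$, the four sub-sums equal $\overline{\beta\gamma^2}_k$, $2(N-1)\overline{\beta\gamma}_k\overline{\gamma}_k$, $(N-1)\overline{\beta}_k\overline{\gamma^2}_k$, and $(N-1)(N-2)\overline{\beta}_k\overline{\gamma}_k^2$. Bounding $(N-1) \leq N$ on the cross term and $\overline{\gamma}_k^2 \leq \overline{\gamma^2}_k$ by Jensen on the last piece repackages the total into $\psi_k$, so this contributes exactly $B\psi_k\sqrt{D_k}$ with $B = \alpha_G\alpha_\Phi^3$.

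For the second-moment term, I expand $\widehat{g}_{i,k} = \widetilde{\beta}_{i,k}\Phi_{i,k}\widetilde{f}_{i,k}$ and use independence of $\Phi_{i,k}$ to pull out $\sigma_\Phi^2$, and independence of $\widetilde{\beta}_{i,k}^2$ (determined by $\delta_{i,k}, \widetilde{\ell}_{i,k}$) from $\widetilde{f}_{i,k}^2$ to pull out a factor $\overline{\beta^2}_k = \upsilon_k$. The real work is a uniform bound on $\mathbb{E}(\widetilde{f}_{i,k}^2)$: expand $\widetilde{f}_{i,k}^2$ via (\ref{eq:f_est-1}) into quadratic combinations of $\widetilde{u}_{j,k}$, split each $\widetilde{u}_{j,k}$ into $u_j + \eta_{j,k}$, use Assumption~\ref{Assumption:add_noise} to kill noise cross-terms (leaving $\sigma_\eta^2$ diagonal contributions weighted by $\lambda$ and $q_r^{-1}\lambda$), and bound the remaining $|u_j|^2$ via Assumption~\ref{Assumption:lipc} anchored at $\boldsymbol{a} = \mathbf{0}$ with $\|\boldsymbol{a}_k \circ \boldsymbol{\delta}_k\|^2 \leq n_k\sigma_{\boldsymbol{a}}^2$ and $\mathbb{E}_{\mathbf{S}}(L_\mathbf{S}^2) = L^2$. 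Averaging over the Binomial $\boldsymbol{\delta}_k$ (giving $\mathbb{E}(n_k) = \lambda$ and $\mathbb{E}(n_k^2) = O(\lambda^2)$) and over the reception indicators $\kappa_{i,j,k}$ produces the polynomial-in-$\lambda$ structure in $C$; combined with $\widetilde{C}$ from the projection slack this reassembles the constant $C$ displayed in the lemma.

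The main obstacle is precisely that second-moment calculation: the cross-products between $\widetilde{u}_{i,k}$ and the random sum $q_r^{-1}\sum_{j}\kappa_{i,j,k}\widetilde{u}_{j,k}$ must be kept finite while correctly tracking how many powers of $\lambda$ arise from summing over active neighbors, how $q_r^{-1}$ interacts with $\mathbb{E}(\kappa_{i,j,k})=q_r$ and $\mathbb{E}(\kappa_{i,j,k}\kappa_{i,j',k})=q_r^2$ on and off the diagonal, and how the Lipschitz constant $L$ is amplified by the number of active coordinates. This is what drives the specific $\lambda^3 L^2$ coefficient and the other polynomial-in-$\lambda$ terms in $C$. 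The combinatorial splitting in the bias analysis is a secondary nuisance, but is fully determined by the case analysis on $(j_1,j_2,i)$ above together with one application of Jensen's inequality.
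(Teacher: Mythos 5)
Your proposal is correct and follows essentially the same route as the paper's own proof: take the expectation of the recursion in Proposition~\ref{lem:gamma_sum-1}, kill the zero-mean noise term, use Proposition~\ref{lem:bias_projection} for $\mathbb{E}(\varDelta_{k+1})$, strong concavity (Assumption~\ref{Assumption:strong_concave}) for the $-A\theta_{k}D_{k}$ term, the bound on $|b_{i,k}|$ from Theorem~\ref{prop:bias} together with the double-sum decomposition (\ref{eq:step_aver_sum}) and Jensen's inequality for the $B\psi_{k}\sqrt{D_{k}}$ term, and the second-moment bound of Lemma~\ref{lem:bound_g} for the $C\upsilon_{k}$ term. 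The only cosmetic differences are that you apply Cauchy--Schwarz at the vector level (via $\sqrt{\mathbb{E}\|\boldsymbol{b}_{k}\|^{2}}$) where the paper works coordinate-wise with the $\ell_{1}$--$\ell_{2}$ inequality, and you re-derive inline the content of Lemma~\ref{lem:bound_g} and of the bound $w_{i,k}\leq q_{\mathrm{a}}\overline{\beta\gamma}_{k}^{-1}\psi_{k}$ rather than citing them.
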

\begin{IEEEproof}
See Appendix \ref{subsec:Proof-of-Lemma_D}. 
\end{IEEEproof}
Based on (\ref{eq:dk+1_dk}), we can derive the upper bounds of $D_{k}$,
as stated as follows.

\begin{mythm}\label{prop:rate} Introduce $K_{2}$ the minimum value
of $k\geq K_{1}$ such that $\theta_{k}<A^{-1}$. Define the following
parameters: 
\begin{equation}
\chi_{k}=\frac{1}{\theta_{k}}-\frac{\psi_{k+1}^{2}\theta_{k}}{\psi_{k}^{2}\theta_{k+1}^{2}},\epsilon_{1}=\max_{k\geq K_{0}}\chi_{k},\epsilon_{2}=\max_{k\geq K_{0}}\frac{\theta_{k}\upsilon_{k}}{\psi_{k}^{2}},\label{eq:chi_w}
\end{equation}
\begin{equation}
\varpi_{k}=\frac{1}{\theta_{k}}-\frac{\upsilon_{k+1}}{\upsilon_{k}\theta_{k+1}},\epsilon_{3}=\max_{k\geq K_{0}}\varpi_{k},\epsilon_{4}=\max_{k\geq K_{0}}\frac{\psi_{k}^{2}}{\theta_{k}\upsilon_{k}}.\label{eq:small_p}
\end{equation}
If $\epsilon_{1}<A$ and $\epsilon_{2}<\infty$, then 
\begin{equation}
D_{k}\leq\vartheta^{2}\psi_{k}^{2}\theta_{k}^{-2},\:\forall k\geq K_{0},\label{eq:Dk_up1}
\end{equation}
with 
\begin{equation}
\vartheta\!\geq\!\max\!\left\{ \!\frac{\theta_{K_{0}}\sqrt{D_{K_{0}}}}{\psi_{K_{0}}},\frac{B+\!\sqrt{B^{2}\!+4C\epsilon_{2}\left(A-\epsilon_{1}\right)}}{2\left(A-\epsilon_{1}\right)}\!\right\} .\label{eq:para_0}
\end{equation}
If $\epsilon_{3}<A$ and $\epsilon_{4}<\infty$, then 
\begin{equation}
D_{k}\leq\varrho^{2}\upsilon_{k}\theta_{k}^{-1},\:\forall k\geq K_{0},\label{eq:Dk_up2}
\end{equation}
with 
\begin{equation}
\varrho\!\geq\!\max\!\left\{ \!\sqrt{\!\frac{D_{K_{0}}\!\theta_{K_{0}}}{\upsilon_{K_{0}}}},\frac{B\sqrt{\epsilon_{4}}\!+\!\sqrt{B^{2}\epsilon_{4}\!+\!4C\!\left(A-\epsilon_{3}\right)}}{2\left(A-\epsilon_{3}\right)}\!\right\} .\label{eq:para_0-1}
\end{equation}
\end{mythm}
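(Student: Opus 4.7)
The plan is to establish both upper bounds by induction on $k$, using the recurrence
\begin{equation*}
D_{k+1} \leq (1-A\theta_k) D_k + B\psi_k\sqrt{D_k} + C\upsilon_k
\end{equation*}
from Lemma~\ref{lem:inequality_D}. The threshold $K_2$ plays a role because the inductive step needs $1-A\theta_k\geq 0$, so that substituting an upper bound on $D_k$ into the first term does not reverse the inequality; the first argument of each max in (\ref{eq:para_0}) and (\ref{eq:para_0-1}) then enforces the base case at the smallest index where the hypothesis must be seeded.

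For the first bound, I would assume the hypothesis $\sqrt{D_k}\leq \vartheta\psi_k/\theta_k$ and substitute it into the recurrence, obtaining
\begin{equation*}
D_{k+1} \leq (1-A\theta_k)\frac{\vartheta^2\psi_k^2}{\theta_k^2} + \frac{B\vartheta\psi_k^2}{\theta_k} + C\upsilon_k.
\end{equation*}
The target $D_{k+1}\leq \vartheta^2\psi_{k+1}^2/\theta_{k+1}^2$ rearranges, after multiplication by $\theta_k/\psi_k^2$, into
\begin{equation*}
\vartheta^2 (A-\chi_k) - B\vartheta - C\,\frac{\upsilon_k\theta_k}{\psi_k^2} \geq 0,
\end{equation*}
with $\chi_k = 1/\theta_k - \psi_{k+1}^2\theta_k/(\psi_k^2\theta_{k+1}^2)$ as in (\ref{eq:chi_w}). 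Using $\chi_k \leq \epsilon_1$ and $\upsilon_k\theta_k/\psi_k^2 \leq \epsilon_2$, this is implied by $\vartheta^2(A-\epsilon_1) - B\vartheta - C\epsilon_2 \geq 0$, a quadratic in $\vartheta$ with strictly positive leading coefficient $A-\epsilon_1$. Its positive root is precisely the second argument of the max in (\ref{eq:para_0}), so any $\vartheta$ satisfying (\ref{eq:para_0}) closes the induction.

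The second bound follows by the parallel calculation with hypothesis $\sqrt{D_k}\leq \varrho\sqrt{\upsilon_k/\theta_k}$. Substituting into the recurrence and dividing by $\upsilon_k$ converts the target $D_{k+1}\leq \varrho^2\upsilon_{k+1}/\theta_{k+1}$ into
\begin{equation*}
\varrho^2 (A-\varpi_k) - B\varrho\,\frac{\psi_k}{\sqrt{\theta_k\upsilon_k}} - C \geq 0,
\end{equation*}
and the bounds $\varpi_k\leq\epsilon_3$ and $\psi_k/\sqrt{\theta_k\upsilon_k}\leq\sqrt{\epsilon_4}$ reduce it to a quadratic in $\varrho$ whose positive root is the second argument of the max in (\ref{eq:para_0-1}).

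The main difficulty is algebraic rather than probabilistic: organizing the one-step recurrence so that the cross-index ratios $\psi_{k+1}^2\theta_k/(\psi_k^2\theta_{k+1}^2)$ and $\upsilon_{k+1}/(\upsilon_k\theta_{k+1})$ are absorbed into the single constants $\epsilon_1,\epsilon_3$, while the remaining residual terms collapse into $\epsilon_2,\epsilon_4$. This is exactly what the definitions (\ref{eq:chi_w}) and (\ref{eq:small_p}) are designed to do. Once those four suprema are controlled and $\epsilon_1,\epsilon_3 < A$, the inductive step reduces to two quadratic inequalities in $\vartheta$ and $\varrho$, whose positive roots give precisely the expressions in the statement. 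No further probabilistic machinery is needed here, since all randomness has already been absorbed into the deterministic sequences $\theta_k, \psi_k, \upsilon_k$ via expectation in the definition of $D_k$.
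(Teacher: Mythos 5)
Your proposal is correct and follows essentially the same route as the paper's proof: induction on $k$ using the recurrence of Lemma~\ref{lem:inequality_D}, with the base case seeded by the first argument of each max, the condition $1-A\theta_{k}\geq0$ ensuring the substitution step is valid, and the inductive step reduced to the quadratic inequalities $\vartheta^{2}\left(A-\chi_{k}\right)-B\vartheta-C\upsilon_{k}\theta_{k}\psi_{k}^{-2}\geq0$ and $\varrho^{2}\left(A-\varpi_{k}\right)-B\varrho\psi_{k}\left(\theta_{k}\upsilon_{k}\right)^{-1/2}-C\geq0$, which the constants $\epsilon_{1},\ldots,\epsilon_{4}$ turn into the uniform quadratics whose positive roots give (\ref{eq:para_0}) and (\ref{eq:para_0-1}). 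The only cosmetic difference is that the paper solves the quadratic for each $k$ to obtain $\overline{\vartheta}_{k}$ and then takes a supremum by monotonicity, whereas you bound the coefficients by $\epsilon_{1},\epsilon_{2}$ (resp. $\epsilon_{3},\epsilon_{4}$) first and solve a single quadratic; these are equivalent.
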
 
\begin{IEEEproof}
See Appendix~\ref{subsec:proof_convergencerate}. 
\end{IEEEproof}
The general form of the upper bounds of $D_{k}$ looks complicated
mainly due to the averaged parameters $\theta_{k}$, $\upsilon_{k}$,
and $\psi_{k}$. The conditions that $\epsilon_{1}<A$ and $\epsilon_{3}<A$
can be checked numerically for any fix value of $N$, $q_{\mathrm{a}}$,
and any sequences $\left\{ \beta_{\ell}\right\} _{\ell\geq0}$ and
$\left\{ \gamma_{\ell}\right\} _{\ell\geq0}$. Here we focus on the
theoretical analysis of: \emph{i}) decreasing order of $D_{k}$; \emph{ii})
convergence of $\epsilon_{2}$ and \emph{$\epsilon_{4}$}; \emph{iii})
convergence of $\epsilon_{1}$ and \emph{$\epsilon_{3}$}.

We propose first the upper bounds of $\upsilon_{k}\theta_{k}^{-1}$,
$\psi_{k}^{2}\theta_{k}^{-2}$,$\theta_{k}\upsilon_{k}\psi_{k}^{-2}$,
and $\psi_{k}^{2}\theta_{k}^{-1}\upsilon_{k}^{-1}$ in the following
lemma. 
\begin{lem}
\label{lem:bounds_collect}Consider $\beta_{\ell}=\beta_{0}k^{-c_{1}}$
and $\gamma_{\ell}=\gamma_{0}k^{-c_{2}}$. For any $\xi\in\left(0,1\right)$
and $\xi'>0$, there exists $K'$ such that $\forall k\geq K'$ ,\textup{
\begin{align}
\frac{\upsilon_{k}}{\theta_{k}} & <\left(1+\xi'\right)\left(1-\xi\right)^{-2c_{1}}q_{\mathrm{a}}\beta_{0}\gamma_{0}^{-1}\left(q_{\mathrm{a}}k\right)^{-c_{1}+c_{2}},\label{eq:rate_1}\\
\frac{\psi_{k}^{2}}{\theta_{k}^{2}} & <\frac{\left(1+\xi'\right)^{2}}{\left(1-\xi\right)^{2c_{1}+4c_{2}}}\left(\lambda+1\right)^{4}\gamma_{0}^{2}\left(q_{\mathrm{a}}k\right)^{-2c_{2}},\label{eq:rate_2}\\
\frac{\psi_{k}^{2}}{\theta_{k}\upsilon_{k}} & <\frac{\left(1+\xi'\right)^{2}\left(\lambda+1\right)^{4}\gamma_{0}^{3}}{\left(1-\xi\right)^{2c_{1}+4c_{2}}q_{\mathrm{a}}\beta_{0}}\left(q_{\mathrm{a}}\left(k-1\right)+1\right)^{c_{1}-3c_{2}},\label{eq:const_2}\\
\frac{\theta_{k}\upsilon_{k}}{\psi_{k}^{2}} & <\frac{\left(1+\xi'\right)^{2}q_{\mathrm{a}}\beta_{0}}{\left(1-\xi\right)^{3c_{1}+c_{2}}\lambda^{4}\gamma_{0}^{3}}\left(q_{\mathrm{a}}\left(k-1\right)+1\right)^{-c_{1}+3c_{2}}.\label{eq:const_1}
\end{align}
}Both $\xi$ and $\xi'$ can be arbitrarily close to $0$ as $K'\rightarrow\infty$. 
\end{lem}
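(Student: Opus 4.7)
The plan is to express every quantity in the lemma as a function of binomial moments $\mathbb{E}[\beta_{\ell}^{p}\gamma_{\ell}^{q}]$, where $\ell=\widetilde{\ell}_{i,k}+1$ with $\widetilde{\ell}_{i,k}\sim\mathcal{B}(k-1,q_{\mathrm{a}})$, and to control each of these by concentrating $\widetilde{\ell}_{i,k}$ near its mean $(k-1)q_{\mathrm{a}}$. Since $\widetilde{\beta}_{i,k}=\delta_{i,k}\beta_{\ell_{i,k}}$ and $\widetilde{\gamma}_{i,k}=\delta_{i,k}\gamma_{\ell_{i,k}}$, conditioning on $\delta_{i,k}=1$ yields $\overline{\beta^{p}\gamma^{q}}_{k}=q_{\mathrm{a}}\mathbb{E}[\beta_{\widetilde{\ell}+1}^{p}\gamma_{\widetilde{\ell}+1}^{q}]$ for every monomial, so $\theta_{k}$, $\upsilon_{k}$, $\psi_{k}$ and the four target ratios decompose entirely into such moments.

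The core analytic step is a multiplicative Chernoff bound: for any $\xi\in(0,1)$, the event $A_{k}=\{(1-\xi)(k-1)q_{\mathrm{a}}\leq\widetilde{\ell}_{i,k}\leq(1+\xi)(k-1)q_{\mathrm{a}}\}$ satisfies $\mathbb{P}(A_{k}^{c})\leq 2\exp(-\xi^{2}(k-1)q_{\mathrm{a}}/3)$. On $A_{k}$ the index $\ell$ lies in $[(1-\xi)q_{\mathrm{a}}k,(1+\xi)q_{\mathrm{a}}k]$ for $k$ large, so splitting the expectation and using $\beta_{\ell}\leq\beta_{0}$, $\gamma_{\ell}\leq\gamma_{0}$ on $A_{k}^{c}$ gives the two-sided estimate
\[
(1+\xi)^{-pc_{1}-qc_{2}}\beta_{0}^{p}\gamma_{0}^{q}(q_{\mathrm{a}}k)^{-pc_{1}-qc_{2}}\bigl(1-\mathbb{P}(A_{k}^{c})\bigr)\leq\mathbb{E}[\beta_{\ell}^{p}\gamma_{\ell}^{q}]\leq(1-\xi)^{-pc_{1}-qc_{2}}\beta_{0}^{p}\gamma_{0}^{q}(q_{\mathrm{a}}k)^{-pc_{1}-qc_{2}}+\beta_{0}^{p}\gamma_{0}^{q}\mathbb{P}(A_{k}^{c}).
\]
The off-event contribution decays exponentially while the leading term decays only polynomially, so for $k\geq K'$ the tail is negligible and can be absorbed into a multiplicative slack $(1+\xi')$ arbitrarily close to $1$.

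Plugging these estimates into the definitions now yields each bound. The ratio $\upsilon_{k}/\theta_{k}=q_{\mathrm{a}}\mathbb{E}[\beta_{\ell}^{2}]/\mathbb{E}[\beta_{\ell}\gamma_{\ell}]$ inherits $(1-\xi)^{-2c_{1}}$ from the numerator and $(1+\xi)^{c_{1}+c_{2}}$ from the denominator, producing (\ref{eq:rate_1}). For (\ref{eq:rate_2}) I evaluate the three summands of $\psi_{k}$ at leading order as $2Nq_{\mathrm{a}}^{2}$, $q_{\mathrm{a}}$ and $(N-1)^{2}q_{\mathrm{a}}^{2}$ times $\beta_{0}\gamma_{0}^{2}(q_{\mathrm{a}}k)^{-c_{1}-2c_{2}}$; using $q_{\mathrm{a}}\leq 1$ and $\lambda=Nq_{\mathrm{a}}$, the sum of coefficients is at most $2\lambda+\lambda^{2}+1=(\lambda+1)^{2}$, so squaring and dividing by $\theta_{k}^{2}$ produces the $(\lambda+1)^{4}$ factor. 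The bound (\ref{eq:const_2}) then follows by combining the two preceding estimates. For (\ref{eq:const_1}) I use the lower-bound side of the two-sided estimate on every summand of $\psi_{k}$ and invoke the elementary identity $q_{\mathrm{a}}+q_{\mathrm{a}}^{2}(N^{2}+1)\geq q_{\mathrm{a}}^{2}N^{2}=\lambda^{2}$ to obtain $\psi_{k}\geq(1+\xi)^{-c_{1}-2c_{2}}\lambda^{2}\beta_{0}\gamma_{0}^{2}(q_{\mathrm{a}}k)^{-c_{1}-2c_{2}}$, which after squaring yields the $\lambda^{4}$ denominator in the statement.

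The main obstacle is not the concentration step, which is routine Chernoff, but the bookkeeping: the exponents of $(1-\xi)$ appearing in the statement must match exactly the combination of upper and lower bounds used for the numerator and denominator of each ratio, while the mirror $(1+\xi)$ factors from the opposite side and the exponentially small off-event tails must all be absorbed into the single multiplicative slack $(1+\xi')$. This is valid precisely for $K'$ growing as $\xi,\xi'\to 0$, yielding the asymptotic sharpness claimed at the end of the lemma. The subtle point is recovering the clean factor $\lambda^{2}$ in the lower bound on $\psi_{k}$: retaining only the dominant $(N-1)^{2}$ summand yields the weaker $(\lambda-q_{\mathrm{a}})^{2}$, so all three summands must be kept and combined via the above identity to recover the $\lambda^{4}$ denominator of (\ref{eq:const_1}).
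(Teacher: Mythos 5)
Your overall architecture matches the paper's: reduce $\theta_{k}$, $\upsilon_{k}$, $\psi_{k}$ to binomial moments of the random index $\ell_{i,k}$, control those moments by Chernoff concentration of $\widetilde{\ell}_{i,k}$ around $q_{\mathrm{a}}(k-1)$, absorb the exponentially small tail contribution into the $(1+\xi')$ slack, and then do coefficient bookkeeping. Your coefficient counts also agree with the paper's: the upper bound $2\lambda+1+\lambda^{2}=(\lambda+1)^{2}$ on the $\psi_{k}$ coefficients, and, for the lower bound on $\psi_{k}$, the observation that all three summands must be retained so that $q_{\mathrm{a}}+q_{\mathrm{a}}^{2}(N^{2}+1)\geq\lambda^{2}$ (keeping only the $(N-1)^{2}$ term would give the weaker $(\lambda-q_{\mathrm{a}})^{2}$). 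The one genuine methodological difference is the source of the lower bounds. The paper never uses a two-sided concentration window: it applies its one-sided Chernoff estimate (Lemma~\ref{lem:bound_chern}) only for the \emph{upper} bounds, and obtains the lower bounds $\theta_{k}\geq\beta_{\overline{k}'}\gamma_{\overline{k}'}$, $\upsilon_{k}\geq q_{\mathrm{a}}\beta_{\overline{k}'}^{2}$, $\psi_{k}>\lambda^{2}\beta_{\overline{k}'}\gamma_{\overline{k}'}^{2}$ by Jensen's inequality, exploiting the convexity of $\ell\mapsto\ell^{-c}$ and evaluating exactly at the mean index $\overline{k}'=q_{\mathrm{a}}(k-1)+1$, with no slack factor at all.

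This difference is where your write-up has a concrete flaw. Your lower bound on each moment carries a factor $(1+\xi)^{-pc_{1}-qc_{2}}$ (and a factor $1-\mathbb{P}(A_{k}^{c})$), so after taking ratios each of the four stated inequalities acquires a mirror factor such as $(1+\xi)^{c_{1}+c_{2}}$ or $(1+\xi)^{2c_{1}+4c_{2}}$. These are fixed constants strictly greater than $1$; unlike the exponential tails they do not vanish as $k\rightarrow\infty$, so they cannot be ``absorbed into the single multiplicative slack $(1+\xi')$'' when $\xi'$ is prescribed arbitrarily small while $\xi$ is not. As written, your argument therefore proves the four inequalities only with constants inflated by these $(1+\xi)$ powers, which is strictly weaker than the lemma, whose constants contain only $(1-\xi)$ and $(1+\xi')$ factors precisely because the paper's Jensen lower bounds are slack-free. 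The repair is routine: either replace your lower bounds by the Jensen bounds at the mean index (the paper's route), or run your window argument with an internal parameter $\widetilde{\xi}<\xi$ chosen so small that $(1-\widetilde{\xi})^{-a}(1+\widetilde{\xi})^{b}\leq(1-\xi)^{-a}$ for the relevant exponents $a,b$. Either fix yields the lemma exactly as stated, so the gap is one of bookkeeping logic rather than of the underlying idea.
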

\begin{IEEEproof}
See Appendix~\ref{subsec:Proof-of-Lemma_order}. 
\end{IEEEproof}
From Lemma~\ref{lem:bounds_collect}, we can clearly see that the
decreasing order of $\upsilon_{k}\theta_{k}^{-1}$ and of $\psi_{k}^{2}\theta_{k}^{-2}$
is the same as that of $\beta_{k}\gamma_{k}\propto k^{-c_{1}+c_{2}}$
and of $\gamma_{k}^{2}\propto k^{-2c_{2}}$, respectively. According
to (\ref{eq:const_2}) and (\ref{eq:const_1}), we find that $\lim_{k\rightarrow\infty}\theta_{k}\upsilon_{k}\psi_{k}^{-2}<\infty$
and $\epsilon_{2}<\infty$ if and only if $c_{1}\geq3c_{2}$, whereas
$\lim_{k\rightarrow\infty}\psi_{k}^{2}\theta_{k}^{-1}\upsilon_{k}^{-1}<\infty$
and $\epsilon_{4}<\infty$ if and only if $c_{1}\leq3c_{2}$.

The convergence of $\chi_{k}$ and $\varpi_{k}$ are discussed in
the following lemma, which is more challenging to be justified. 
\begin{lem}
\label{lem:constant_term}Consider $\beta_{\ell}=\beta_{0}k^{-c_{1}}$
and $\gamma_{\ell}=\gamma_{0}k^{-c_{2}}$, then both $\chi_{k}$ and
$\varpi_{k}$ are bounded. There always exist $\beta_{0}<\infty$
and $\gamma_{0}<\infty$ such that $\epsilon_{1}=\max_{k\geq K_{0}}\chi_{k}<A$
and $\epsilon_{3}=\max_{k\geq K_{0}}\varpi_{k}<A$. 
\end{lem}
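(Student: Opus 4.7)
The plan is to show, via Taylor expansion of the ratios $\theta_k/\theta_{k+1}$, $\psi_{k+1}/\psi_k$ and $\upsilon_{k+1}/\upsilon_k$, that $\chi_k$ and $\varpi_k$ are of order $k^{c_1+c_2-1}$; this is bounded since Assumption~\ref{Assumption:para} forces $c_1+c_2\leq 1$. A separate scaling argument in $(\beta_0,\gamma_0)$ will then push the finite suprema $\epsilon_1,\epsilon_3$ below the threshold $A$.

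Starting from~\eqref{eq:step_aver_1} and the Chernoff-type concentration of $1+\widetilde\ell_{i,k}\sim 1+\mathcal{B}(k-1,q_{\mathrm{a}})$ around $q_{\mathrm{a}}k$ already used in Lemma~\ref{lem:bounds_collect}, I would first record the leading asymptotics
\[
\theta_k=\beta_0\gamma_0(q_{\mathrm{a}}k)^{-c_1-c_2}(1+O(1/k)),\;\upsilon_k=q_{\mathrm{a}}\beta_0^{2}(q_{\mathrm{a}}k)^{-2c_1}(1+O(1/k)),
\]
together with $\psi_k=C_\psi(q_{\mathrm{a}}k)^{-c_1-2c_2}(1+O(1/k))$, where the constant $C_\psi$ is proportional to $\beta_0\gamma_0^{2}$. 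The $O(1/k)$ \emph{relative} accuracy is exactly what Binomial expectations of polynomial functions of the index deliver, and it is essential for the next step.

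Factoring $\chi_k=\theta_k^{-1}\bigl[1-(\psi_{k+1}/\psi_k)^{2}(\theta_k/\theta_{k+1})^{2}\bigr]$ and $\varpi_k=\theta_k^{-1}\bigl[1-(\upsilon_{k+1}/\upsilon_k)(\theta_k/\theta_{k+1})\bigr]$, then expanding via $((k{+}1)/k)^{\alpha}=1+\alpha/k+O(1/k^{2})$, the leading $-c_1-c_2$ exponent of $\theta$ cancels in each product and the square brackets reduce to $2c_2/k+O(1/k^{2})$ and $(c_1-c_2)/k+O(1/k^{2})$ respectively (both positive, since $c_2\leq 1-c_1<c_1$ under Assumption~\ref{Assumption:para}). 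Multiplying by $\theta_k^{-1}\sim(\beta_0\gamma_0)^{-1}(q_{\mathrm{a}}k)^{c_1+c_2}$ yields
\[
\chi_k\sim\frac{2c_2\,q_{\mathrm{a}}^{c_1+c_2}}{\beta_0\gamma_0}\,k^{c_1+c_2-1},\qquad \varpi_k\sim\frac{(c_1-c_2)\,q_{\mathrm{a}}^{c_1+c_2}}{\beta_0\gamma_0}\,k^{c_1+c_2-1},
\]
both bounded because $c_1+c_2-1\leq 0$. Combined with the trivial finiteness of each individual term, this gives $\epsilon_1,\epsilon_3<\infty$.

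The second claim follows by scaling: substituting $(\beta_0,\gamma_0)\to(c\beta_0,c'\gamma_0)$ multiplies $\theta_k$ by $cc'$, $\upsilon_k$ by $c^{2}$ and $\psi_k$ by $cc'^{2}$, so a direct substitution into~\eqref{eq:chi_w} and~\eqref{eq:small_p} shows that both $\chi_k$ and $\varpi_k$ scale as $(cc')^{-1}$. Choosing the product $cc'$ larger than $\max(\epsilon_1^{(0)},\epsilon_3^{(0)})/A$, where the superscript $(0)$ refers to a reference pair $(\beta_0,\gamma_0)$, simultaneously enforces $\epsilon_1<A$ and $\epsilon_3<A$, while Assumption~\ref{Assumption:para}(III) is preserved by taking $K_0$ larger if necessary. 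The main obstacle I anticipate is the uniform control of the $O(1/k)$ remainders in the three ratios: these quantities are Binomial expectations rather than polynomials in $k$, so one must show that the concentration error shrinks strictly faster than the $k^{c_1+c_2-1}$ leading order, uniformly in $k\geq K_0$; this should follow from sharpening the Chernoff tail bounds already employed in the proofs of Theorem~\ref{prop:bias} and Lemma~\ref{lem:bounds_collect}.
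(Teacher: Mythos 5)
Your second step (the scaling argument) is correct, and is in fact cleaner than what the paper does: $\chi_k$ in (\ref{eq:chi_w}) and $\varpi_k$ in (\ref{eq:small_p}) are \emph{exactly} homogeneous of degree $-1$ in the product $\beta_0\gamma_0$, since $\theta_k\propto\beta_0\gamma_0$, $\upsilon_k\propto\beta_0^{2}$, $\psi_k\propto\beta_0\gamma_0^{2}$, and in the second terms of both expressions all prefactors cancel except one factor of $(\beta_0\gamma_0)^{-1}$; enlarging $K_0$ to preserve Assumption~\ref{Assumption:para}(III) only shrinks the suprema, so no circularity arises. The paper obtains the same conclusion implicitly, by tracking a $(\beta_0\gamma_0)^{-1}$ prefactor through its explicit upper bounds $\varpi_k^{+}$ and $\chi_k^{+}$. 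The problem is that this step presupposes the first one, and your boundedness argument has a genuine gap. It rests entirely on the two-sided asymptotics $\theta_k=\beta_0\gamma_0(q_{\mathrm{a}}k)^{-c_1-c_2}(1+O(1/k))$, $\upsilon_k=q_{\mathrm{a}}\beta_0^{2}(q_{\mathrm{a}}k)^{-2c_1}(1+O(1/k))$, $\psi_k=C_\psi(q_{\mathrm{a}}k)^{-c_1-2c_2}(1+O(1/k))$, which you assert but do not prove, and the justification you give is not right: $\theta_k=\beta_0\gamma_0\,\mathbb{E}[(1+\widetilde{\ell}_{i,k})^{-c_1-c_2}]$ is a Binomial expectation of a \emph{negative power}, not of a polynomial, and concentration alone cannot deliver relative accuracy $O(1/k)$. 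A Chernoff window of half-width $\xi q_{\mathrm{a}}k$ gives relative error $O(\xi)+O\bigl(k^{c}e^{-\xi^{2}q_{\mathrm{a}}k/2}\bigr)$, and optimizing over $\xi$ yields $O(\sqrt{\log k/k})$, not $O(1/k)$. This order matters: with only $O(\sqrt{\log k/k})$ control, your bracket $\bigl[1-(\psi_{k+1}/\psi_k)^{2}(\theta_k/\theta_{k+1})^{2}\bigr]$ is $O(\sqrt{\log k/k})$, and after multiplying by $\theta_k^{-1}\sim k^{c_1+c_2}$ you only get $\chi_k=O\bigl(k^{c_1+c_2-1/2}\sqrt{\log k}\bigr)$, which is unbounded since $c_1>0.5$. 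So ``sharpening the Chernoff tail bounds,'' as you propose at the end, cannot close the gap by itself: one needs a second-order Taylor expansion of $x\mapsto x^{-c}$ around the Binomial mean on the bulk event, exploiting $\mathbb{E}(X-\mu)=0$ and a variance bound, with Chernoff used only for the exponentially small lower tail. Moreover, the exact constants you display (e.g.\ $\chi_k\sim 2c_2 q_{\mathrm{a}}^{c_1+c_2}k^{c_1+c_2-1}/(\beta_0\gamma_0)$) require still more: the relative errors at consecutive indices must cancel in the ratios $\psi_{k+1}/\psi_k$ and $\theta_k/\theta_{k+1}$ to order $o(1/k)$, not merely be $O(1/k)$ individually; the lemma fortunately needs only boundedness, for which $O(1/k)$ would suffice.

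This is precisely the point where the paper takes a different route that avoids sharp two-sided asymptotics altogether. Instead of expanding each of $\theta_k,\upsilon_k,\psi_k$ separately, it controls the \emph{consecutive differences} of Binomial averages directly through Pascal's identity: Lemma~\ref{lem:diff_bound} shows $\overline{z}_k-\overline{z}_{k+1}<cq_{\mathrm{a}}\overline{z}'_k$ for $z_\ell=\ell^{-c}$, after which crude one-sided estimates suffice --- Chernoff upper bounds (Lemma~\ref{lem:bound_chern}) for the numerators and Jensen lower bounds (\ref{eq:lower_jenson}) for the denominators --- to conclude that $\beta_0\gamma_0\varpi_k$ and $\beta_0\gamma_0\chi_k$ are uniformly bounded. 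To repair your proof, keep your homogeneity/scaling step, but replace the asserted asymptotics either by the bulk-Taylor-plus-tail estimate described above or by the paper's difference lemma.
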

\begin{IEEEproof}
See Appendix~\ref{subsec:Proof_last}. 
\end{IEEEproof}
The following theorem concludes our discussion.

\begin{mythm}\label{thm:rate_example}Consider $\beta_{\ell}=\beta_{0}k^{-c_{1}}$
and $\gamma_{\ell}=\gamma_{0}k^{-c_{2}}$, if the value of $\beta_{0}\gamma_{0}<\infty$
is large enough, then there exists $\Xi<\infty$, such that 
\begin{equation}
D_{k}\leq\Xi{\color{blue}q_{\mathrm{r}}^{-1}}\left(q_{\mathrm{a}}k\right)^{-\min\left\{ 2c_{2},c_{1}-c_{2}\right\} },\quad\forall k\geq K_{2}.\label{eq:bound_example}
\end{equation}
As $c_{1}=0.75$ and $c_{2}=0.25$, the upper bound of $D_{k}$ has
the optimum decreasing order, i.e\emph{.}, $D_{k}=O(q_{\mathrm{r}}^{-1}\left(q_{\mathrm{a}}k\right)^{-0.5})$.
\end{mythm}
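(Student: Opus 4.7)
The plan is to derive (\ref{eq:bound_example}) by invoking the two alternative bounds of Theorem~\ref{prop:rate}, selecting one or the other depending on the relative size of $c_1$ and $3c_2$, and then to identify the optimum exponent over admissible $(c_1,c_2)$. Lemmas~\ref{lem:bounds_collect} and~\ref{lem:constant_term} provide essentially all the ingredients; what remains is to glue them together and track the $q_{\mathrm{r}}^{-1}$ factor through the constants.

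First I would treat the regime $c_1 \geq 3 c_2$. By (\ref{eq:const_1}), $\theta_k \upsilon_k / \psi_k^2$ is of order $(q_{\mathrm{a}} k)^{-c_1+3 c_2}$, which stays bounded precisely when $c_1 \geq 3 c_2$, so $\epsilon_2 < \infty$. Lemma~\ref{lem:constant_term} states that $\chi_k$ is bounded and, since $\chi_k$ scales like $1/\theta_k = O(1/(\beta_0\gamma_0))$, can be forced below $A$ by taking $\beta_0 \gamma_0$ large enough. The hypotheses of the first branch of Theorem~\ref{prop:rate} are then satisfied, and combining (\ref{eq:Dk_up1}) with (\ref{eq:rate_2}) yields
\[
D_k \leq \vartheta^2 \psi_k^2 \theta_k^{-2} \leq \Xi_1 (q_{\mathrm{a}} k)^{-2 c_2}.
\]
The key point for the $q_{\mathrm{r}}^{-1}$ scaling is that the constant $C$ in Lemma~\ref{lem:inequality_D} contains a term proportional to $q_{\mathrm{r}}^{-1}$, and (\ref{eq:para_0}) shows $\vartheta^2$ grows linearly in $C$ (since $B$ is $q_{\mathrm{r}}$-independent and $\epsilon_2(A-\epsilon_1)$ is bounded); hence $\Xi_1$ inherits a $q_{\mathrm{r}}^{-1}$ prefactor.

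Symmetrically, in the regime $c_1 \leq 3 c_2$, (\ref{eq:const_2}) gives $\psi_k^2/(\theta_k \upsilon_k) = O((q_{\mathrm{a}} k)^{c_1-3 c_2})$, so $\epsilon_4 < \infty$. Lemma~\ref{lem:constant_term} again provides $\epsilon_3 < A$ for $\beta_0 \gamma_0$ large enough. Plugging (\ref{eq:rate_1}) into (\ref{eq:Dk_up2}) delivers
\[
D_k \leq \varrho^2 \upsilon_k \theta_k^{-1} \leq \Xi_2 (q_{\mathrm{a}} k)^{-(c_1-c_2)},
\]
with $\Xi_2 \propto q_{\mathrm{r}}^{-1}$ via $C$ entering (\ref{eq:para_0-1}). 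Setting $\Xi = \max\{\Xi_1,\Xi_2\}$ and noting that the two regimes meet continuously at $c_1 = 3 c_2$ (where both exponents equal $2 c_2$) establishes (\ref{eq:bound_example}).

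It then remains to identify the fastest achievable decreasing order by maximizing $\min\{2 c_2,\, c_1 - c_2\}$ over $c_1 \in (0.5,1)$ and $c_2 \in (0, 1-c_1]$. The first argument is increasing in $c_2$ only, the second is increasing in $c_1$ and decreasing in $c_2$, so the optimum lies on the diagonal $2 c_2 = c_1 - c_2$, i.e.\ $c_1 = 3 c_2$, pushed as far as the constraint $c_2 \leq 1-c_1 = 1-3 c_2$ allows; this forces $c_2 = 1/4$, $c_1 = 3/4$, and the common exponent equals $0.5$, giving $D_k = O(q_{\mathrm{r}}^{-1}(q_{\mathrm{a}} k)^{-0.5})$. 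The main obstacle is the bookkeeping around the constants: one must check that the ``large enough $\beta_0 \gamma_0$'' that Lemma~\ref{lem:constant_term} requires in order to force $\epsilon_1 < A$ or $\epsilon_3 < A$ does not simultaneously inflate $\vartheta^2$ or $\varrho^2$ in a way that destroys the $q_{\mathrm{r}}^{-1}$ scaling, and that the rate constants from (\ref{eq:rate_1})--(\ref{eq:rate_2}) are finite uniformly in $k$ beyond some $K_2$. Once these routine verifications are in place, the proof of Theorem~\ref{thm:rate_example} is complete.
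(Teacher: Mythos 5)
Your proposal is correct and follows essentially the same route as the paper's own proof: invoke the two branches of Theorem~\ref{prop:rate}, split on the sign of $c_{1}-3c_{2}$ using Lemma~\ref{lem:bounds_collect} to decide which of $\epsilon_{2},\epsilon_{4}$ is finite, use Lemma~\ref{lem:constant_term} to secure $\epsilon_{1}<A$ or $\epsilon_{3}<A$ for $\beta_{0}\gamma_{0}$ large, track $q_{\mathrm{r}}^{-1}$ through the constant $C$ (noting $\vartheta^{2},\varrho^{2}$ grow linearly in $C$), and maximize $\min\{2c_{2},c_{1}-c_{2}\}$ under $c_{1}+c_{2}\leq1$ to get the exponent $0.5$ at $c_{1}=0.75$, $c_{2}=0.25$. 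If anything, your write-up is more explicit than the paper's terse three-case argument, particularly in justifying the $q_{\mathrm{r}}^{-1}$ scaling and the optimization over $(c_{1},c_{2})$.
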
 
\begin{IEEEproof}
 {From Lemma~\ref{lem:inequality_D} we find that
$q_{\mathrm{r}}$ only affects the constant term $C$, such that $C=O\left(q_{\mathrm{r}}^{-1}\right)$.
We also have the upper bound of $D_{k}$ is dominated by a linear
function of $C$ when $C$ is large by Theorem~\ref{prop:rate}.
Thus $D_{k}=O\left(q_{\mathrm{r}}^{-1}\right)$.} Then we consider
three situations separately.

Case 1: $3c_{2}<c_{1}$. We have $\epsilon_{2}<\infty$ and $\epsilon_{4}=\infty$.
Then only (\ref{eq:Dk_up1}) is valid with $\vartheta<\infty$. We
have $D_{k}\rightarrow O(q_{\mathrm{r}}^{-1}\left(q_{\mathrm{a}}k\right)^{-2c_{2}})$.

Case 2: $3c_{2}>c_{1}$. We have $\epsilon_{4}<\infty$ and $\epsilon_{2}=\infty$.
Then only (\ref{eq:Dk_up2}) is valid with $\varrho<\infty$. We have
$D_{k}\rightarrow O(q_{\mathrm{r}}^{-1}\left(q_{\mathrm{a}}k\right)^{-c_{1}+c_{2}})$.

Case 3: $3c_{2}=c_{1}$. Both (\ref{eq:Dk_up1}) and (\ref{eq:Dk_up2})
are valid, we have $D_{k}\rightarrow O(\left(q_{\mathrm{a}}k\right)^{-2c_{2}})$
or $D_{k}\rightarrow O(q_{\mathrm{r}}^{-1}\left(q_{\mathrm{a}}k\right)^{-c_{1}+c_{2}})$.

As $c_{1}+c_{2}\leq1$ and $c_{2}>0.5$, it is easy to deduce that
$\min\left\{ 2c_{2},c_{1}-c_{2}\right\} \leq0.5$, where the equality
holds only if $c_{1}=0.75$ and $c_{2}=0.25$. 
\end{IEEEproof}
\begin{remark}  {From $\left|\frac{\partial^{2}}{\partial a_{i}\partial a_{j}}G\left(\boldsymbol{a},\boldsymbol{\delta}\right)\right|\leq\alpha_{G}$
in Assumption~\ref{Assumption:lipc}, one have $\left|\frac{\partial^{2}}{\partial a_{i}\partial a_{j}}F\left(\boldsymbol{a}\right)\right|\leq\alpha_{G}$
by definition (\ref{eq:G}), which means that $\left\Vert \nabla F\left(\boldsymbol{a}\right)-\nabla F\left(\boldsymbol{a}'\right)\right\Vert \leq N\alpha_{G}\left\Vert \boldsymbol{a}-\boldsymbol{a}'\right\Vert $
and $\left|F\left(\boldsymbol{a}\right)-F\left(\boldsymbol{a}'\right)\right|\leq N\alpha_{G}\left\Vert \boldsymbol{a}-\boldsymbol{a}'\right\Vert ^{2}/2$
for any $\boldsymbol{a},\boldsymbol{a}'\in\mathcal{A}$. Applying
Jensen's inequality, we can then derive the upper bound of optimization
error 
\begin{align*}
 & F\left(\boldsymbol{a}^{*}\right)-\mathbb{E}(F(\frac{1}{K}\sum_{k=1}^{K}\boldsymbol{a}_{k}))\leq\frac{1}{K}\sum_{k=1}^{K}\left(F\left(\boldsymbol{a}^{*}\right)-\mathbb{E}\left(F\left(\boldsymbol{a}_{k}\right)\right)\right)\\
 & \leq\frac{N\alpha_{G}}{2K}\sum_{k=1}^{K}\left\Vert \boldsymbol{a}_{k}-\boldsymbol{a}^{*}\right\Vert ^{2}\leq\frac{N\alpha_{G}}{2K}\sum_{k=1}^{K}\Xi'k^{-0.5}\\
 & \leq N\alpha_{G}\Xi'K^{-0.5}=O\left(K^{-0.5}\right).
\end{align*}
Clearly, the optimization error achieved by our proposed solution
is $O\left(K^{-0.5}\right)$ when the objective function is smooth
and strongly concave.} \end{remark}

\section{Numerical illustration\label{sec:Numerical-illustration}}

This section presents some numerical examples to further illustrate
our results.

We consider the power control problem described in Section~\ref{sec:Motivating-Example}
Recall that the network is composed of $N$ transmitter-receiver link,
each link has a probability $q_{\mathrm{a}}$ to be active at any
time-slot, with the the local utility function defined in (\ref{eq:u_function-1}).
The power gain is $s_{ij}=\left|h_{ij}\right|^{2}$, where $h_{ij}$,
the channel between transmitter~$i$ and receiver~$j$, follows
Gaussian distribution with variance $\sigma_{ii}^{2}=1$ (direct channel)
and $\sigma_{ij}^{2}=0.1$ (cross channel). The rest of the system
parameters are set as $\sigma^{2}=0.2$, $\omega_{1}=20$ and $\omega_{2}=1$.
In the proposed learning algorithm, the random perturbation $\Phi_{i,k}\in\left\{ -1,1\right\} $
is generated as a symmetric Bernoulli random variable.

First, we set $\beta_{\ell}=0.025\ell^{-0.75}$, $\gamma_{\ell}=10\ell^{-0.25}$
and consider $N=50$, $q_{\mathrm{a}}=0.05$ and $q_{\mathrm{r}}=1$.
We perform a single simulation to show the convergence of the action
$a_{i,k}$ performed by all nodes. The result is shown in Figure~\ref{fig:a_1},
which contains $N=50$ curves. We can see that all the curves turn
to be close to each other and converge after sufficient number of
iterations. Note that the optimum value $a_{i}^{*}$ should be identical
for all nodes in this example, as the global utility function has
a symmetric shape and the random coefficients are generated using
the same mechanism. Because of the sparse activity of nodes, the final
index of iteration look large. In fact, the average times of update
performed by each node is $2500$ when $k=5\times10^{4}$ and $q_{\mathrm{a}}=0.05$.

Second, we investigate the influence of fact that nodes have incomplete
knowledge of local utilities. We set $q_{\mathrm{r}}=\left\{ 1,0.5,0.1\right\} $
and the other parameters remain unchanged. In order to show that our
algorithm converges to optimum, we consider also the ideal gradient
descent method as a reference, with the exact partial derivative obtained
by (\ref{eq:deri_theo}). As we have discussed in Section~\ref{sec:Motivating-Example},
this ideal method requires much informational exchange and may be
infeasible in practice. Figure~\ref{fig:F_50} shows the evolution
of the average global utility by 100 independent simulations. From
the oscillation of the curves, we can see that the objective function
is quite sensitive to the stochastic channel and not easy to optimize.
We find that the global utility tends to the maximum value in average
in all cases. The value of $q_{\mathrm{r}}$ does not seriously affect
the convergence speed, when an active node has only $10\%$ opportunity
to know the local utility of another active node. The two curves corresponding
to $q_{\mathrm{r}}=1$ and $q_{\mathrm{r}}=0.5$ are quite close.

\begin{figure}[h]
\begin{centering}
\includegraphics[width=0.9\columnwidth]{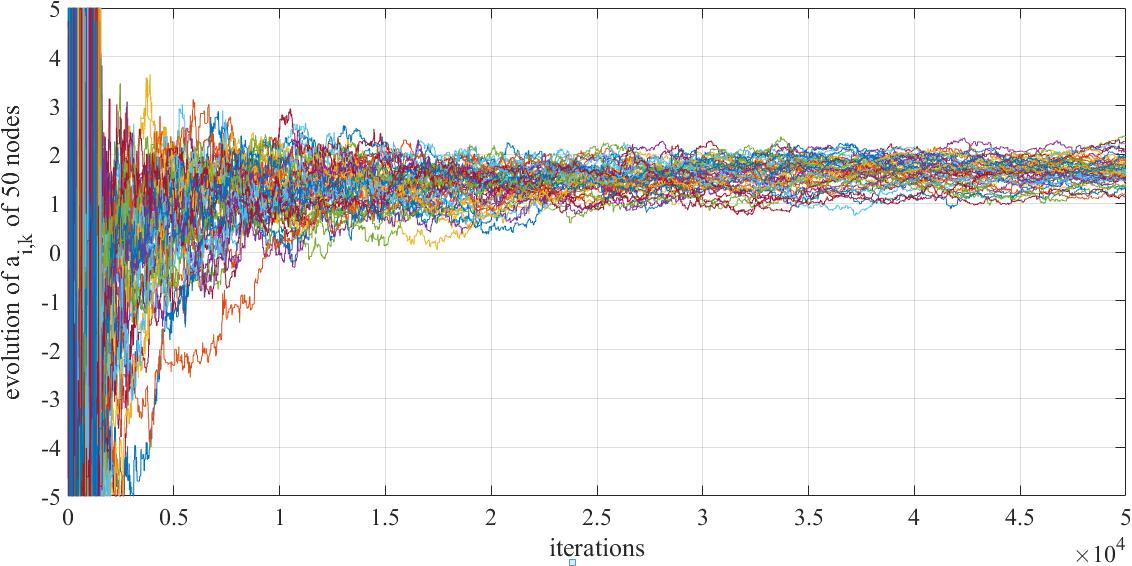} 
\par\end{centering}
\caption{Evolution of action $a_{i,k}$ of 50 nodes, obtained by a single simulation\label{fig:a_1} }
\end{figure}

\begin{figure}[h]
\begin{centering}
\includegraphics[width=0.9\columnwidth]{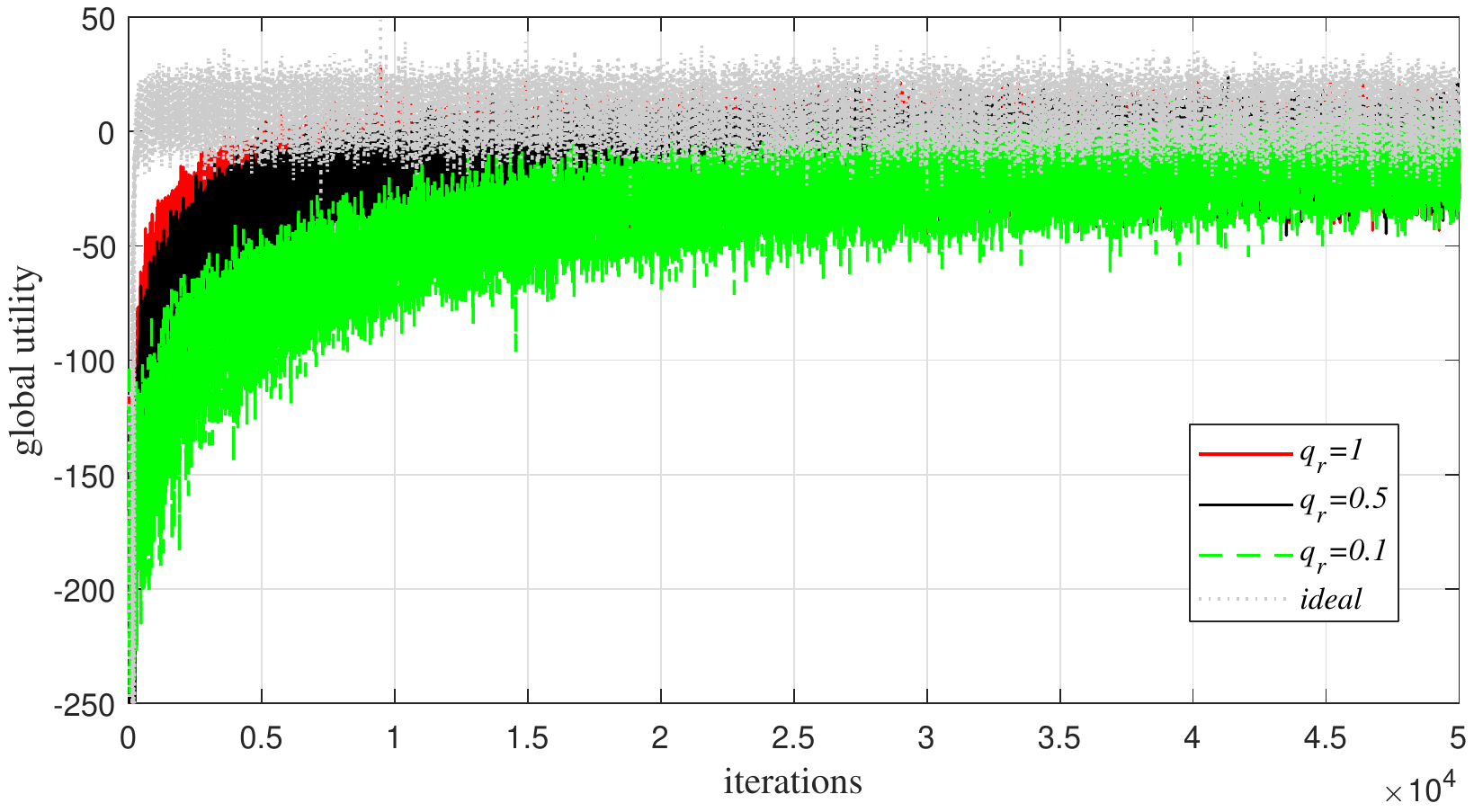} 
\par\end{centering}
\caption{Evolution of the global utility function $F$, with $N=50$, $q_{\mathrm{a}}=0.05$
and $q_{\mathrm{r}}\in\left\{ 1,0.5,0.1\right\} $, average results
by 100 simulations\label{fig:F_50} }
\end{figure}

Finally, we are interested in the evolution of the average divergence
$D_{k}=N^{-1}\mathbb{E}(\left\Vert \boldsymbol{a}_{k}-\boldsymbol{a}^{*}\right\Vert ^{2})$.
We still use $\beta_{\ell}=\frac{\ell^{-0.75}}{10Nq_{\mathrm{a}}}$
and $\gamma_{\ell}=10\ell^{-0.25}$, while consider various values
of $N$, $q_{\mathrm{a}}$, and $q_{\mathrm{r}}$. The result is presented
in Figure~\ref{fig:diverg_1}. Note that the optimal point $\boldsymbol{a}^{*}$
is approximately obtained by applying the ideal gradient method. We
plot an additional curve $\Xi k^{-0.5}$ in Figure~\ref{fig:diverg_1},
which represents the theoretical convergence rate when $\beta_{\ell}\propto\ell^{-0.75}$
and $\gamma_{\ell}\propto\ell^{-0.25}$, under the condition that
the objective function is strongly concave. Note that $\Xi=50$ is
set to facilitate the visual comparison of different curves, as we
only focus on the asymptotic decreasing speed.

We can see that all the tails of the curves in Figure~\ref{fig:diverg_1}
are approximately parallel, which means that $D_{k}\rightarrow O\left(k^{-0.5}\right)$
with different values of $N$, $q_{\mathrm{a}}$, and $q_{\mathrm{r}}$.
We can also see the influence of $q_{\mathrm{r}}$ on $D_{k}$ with
fixed $N=50$ and $q_{\mathrm{a}}=0.5$: compared with the case where
$q_{\mathrm{r}}=1$, $D_{k}$ converges slightly slower as $q_{\mathrm{r}}=0.5$,
which confirms our discussion of Figure~\ref{fig:F_50}.

\begin{figure}[h]
\begin{centering}
\includegraphics[width=0.9\columnwidth]{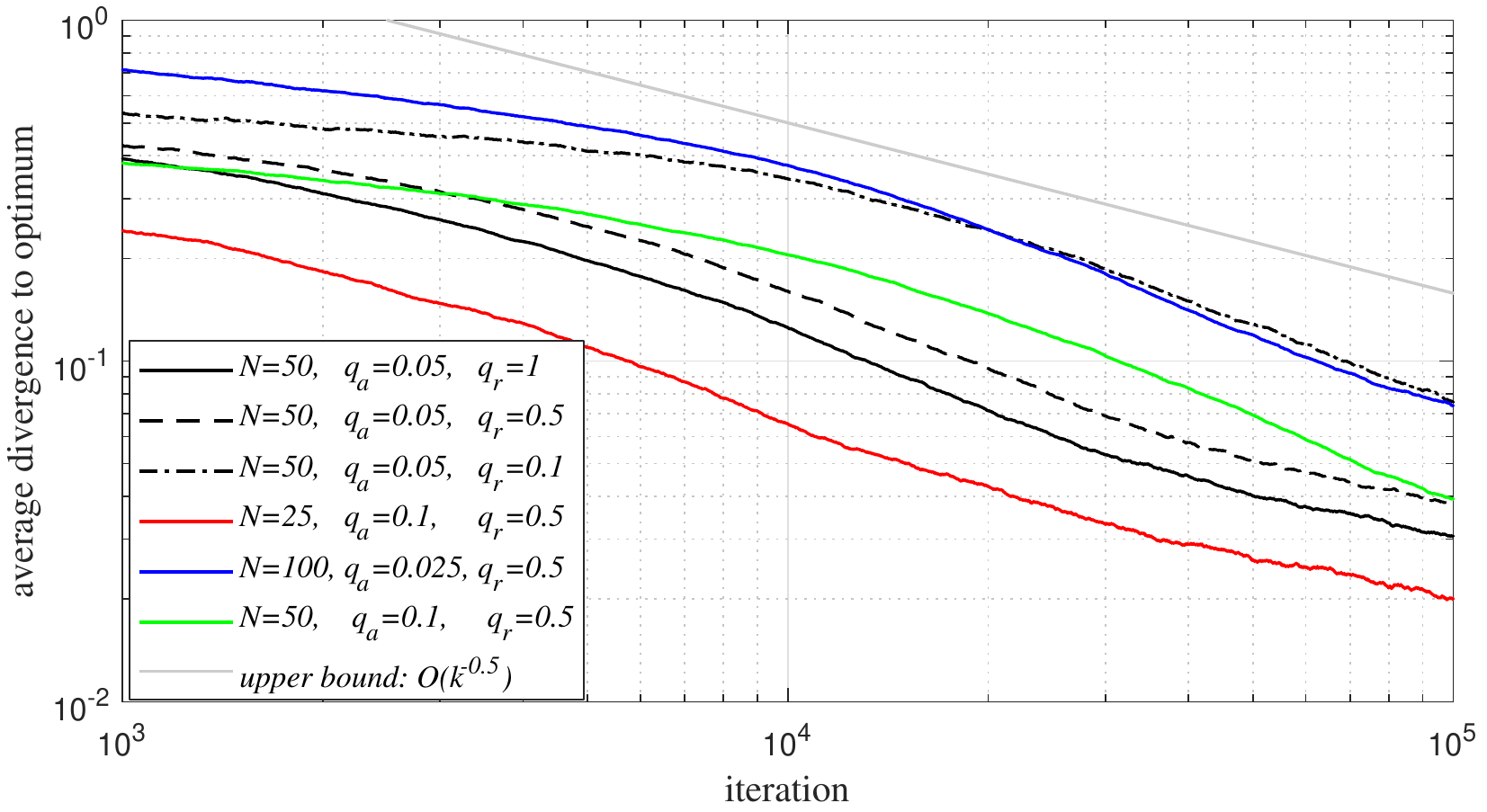} 
\par\end{centering}
\caption{Evolution of $D_{k}$ by 50 simulations. We use $\beta_{\ell}=\frac{\ell^{-0.75}}{10Nq_{\mathrm{a}}}$
and $\gamma_{\ell}=10k^{-0.25}$ and consider various setting of $(N,q_{\mathrm{a}},q_{\mathrm{r}})$.
We also use an addition curve $O(k^{-0.5})$ to present the theoretical
upper bound of $D_{k}$.  {\label{fig:diverg_1} }}
\end{figure}

\section{Conclusion\label{sec:Conclusion}}

In this paper we consider a distributed derivative-free optimization
problem in a large network with sparse activity. We propose a learning
algorithm to make each active node control its action to maximize
the global utility function of the network, which is also affected
by some stochastic process. The algorithm is performed only based
on the numerical observation of the global utility rather than its
gradient. We prove the almost surely convergence of the algorithm
with the tools of stochastic approximation and concentration inequalities.
The analysis is challenging because of the asynchronous feature of
the network. We have also derived the convergence rate of the proposed
algorithm. We provide simulation results to corroborate our claim.
Both theoretical and numerical results show that our derivative-free
learning algorithm can converge at a rate $O\left(k^{-0.5}\right)$.

\appendix

\subsection{\label{subsec:Proof-of-inequality}Proof of inequality (\ref{eq:proj_diff})}

In this proof, we investigate the property of the projection (\ref{eq:projection-1}).
Define $\mathcal{C}_{i}^{*}=\left[a_{i,\min}+\alpha_{\Phi}\gamma_{K_{0}},a_{i,\max}-\alpha_{\Phi}\gamma_{K_{0}}\right]$,
then (\ref{eq:small_pert}) implies that $a_{i}^{*}\in\mathcal{C}_{i}^{*}$,
$\forall i\in\mathcal{N}$. Similarly, let $\mathcal{C}_{i,k}=\left[a_{i,\min}+\alpha_{\Phi}\widetilde{\gamma}_{i,k},a_{i,\max}-\alpha_{\Phi}\widetilde{\gamma}_{i,k}\right]$
for any $i\in\mathcal{N}$ and $k\geq K_{0}$. Due to the fact that
$\widetilde{\gamma}_{i,k}=\delta_{i,k}\gamma_{\delta_{i,k}+\widetilde{\ell}_{i,k}}$
is random, there is not always $a_{i}^{*}\in\mathcal{C}_{i,k}$. Different
cases must be considered depending on the values of $\delta_{i,k}$
and $\widetilde{\ell}_{i,k}$.

Case 1, $\delta_{i,k}=0$. We have $\mathcal{C}_{i,k}=[a_{i,\min}+0,a_{i,\max}+0]$,
thus $a_{i}^{*}\in\mathcal{C}_{i,k}$. By definition, we also have
$a_{i,k}=\mathtt{Proj}_{i,k}(\widetilde{a}_{i,k})\in\mathcal{C}_{i,k}$.
Since the projection decreases the Euclidean distance between $\widetilde{a}_{i,k}$
and $a_{i}^{*}$ if $\widetilde{a}_{i,k}\notin\mathcal{C}_{i,k}$.
it is easy to show that $\left|\mathtt{Proj}_{i,k}\left(\widetilde{a}_{i,k}\right)-a_{i}^{*}\right|\leq\left|\widetilde{a}_{i,k}-a_{i}^{*}\right|$.
Hence $\left(a_{i,k}-a_{i}^{*}\right)^{2}-\left(\widetilde{a}_{i,k}-a_{i}^{*}\right)^{2}\leq0.$

Case 2, $\delta_{i,k}=1$ and $\widetilde{\ell}_{i,k}\geq K_{0}-1$.
Then $\widetilde{\gamma}_{i,k}\leq\gamma_{K_{0}}$ and $\mathcal{C}_{i}^{*}\subseteq\mathcal{C}_{i,k}$.
Thus $a_{i}^{*}\in\mathcal{C}_{i,k}$ as in Case 1 and $\left(a_{i,k}-a_{i}^{*}\right)^{2}-\left(\widetilde{a}_{i,k}-a_{i}^{*}\right)^{2}\leq0.$

Case 3, $\delta_{i,k}=1$ and $\widetilde{\ell}_{i,k}<K_{0}-1$. This
case is complicated as $a_{i}^{*}\notin\mathcal{C}_{i,k}$. There
exist two possible situations: $a_{i}^{*}\in[a_{i,\min},a_{i,\min}+\alpha_{\Phi}\widetilde{\gamma}_{i,k}]$
or $a_{i}^{*}\in[a_{i,\max},a_{i,\max}-\alpha_{\Phi}\widetilde{\gamma}_{i,k}]$.
Here we mainly consider $a_{i}^{*}\in[a_{i,\min},a_{i,\min}+\alpha_{\Phi}\widetilde{\gamma}_{i,k}]$
as the analysis of the other situation is similar. We still need to
discuss the possible value of $a_{i,k}$ in separate situations: \emph{i}).
if $\widetilde{a}_{i,k}\in\mathcal{C}_{i,k}$, we have $\mathtt{Proj}_{i,k}(\widetilde{a}_{i,k})=\widetilde{a}_{i,k}$,
then $(a_{i,k}-a_{i}^{*})^{2}-(\widetilde{a}_{i,k}-a_{i}^{*})^{2}=0$;
\emph{ii}). if $\widetilde{a}_{i,k}>a_{i,\max}-\alpha_{\Phi}\widetilde{\gamma}_{i,k}$,
then $\mathtt{Proj}_{i,k}(\widetilde{a}_{i,k})=a_{i,\max}-\alpha_{\Phi}\widetilde{\gamma}_{i,k}<\widetilde{a}_{i,k}$.
Meanwhile $a_{i}^{*}\leq a_{i,\min}+\alpha_{\Phi}\widetilde{\gamma}_{i,k}<a_{i,\max}-\alpha_{\Phi}\widetilde{\gamma}_{i,k}$.
We get $(a_{i,k}-a_{i}^{*})^{2}-(\widetilde{a}_{i,k}-a_{i}^{*})^{2}<0$;
\emph{iii}). if $\widetilde{a}_{i,k}<a_{i,\min}+\alpha_{\Phi}\widetilde{\gamma}_{i,k}$,
then $\mathtt{Proj}_{i,k}(\widetilde{a}_{i,k})=a_{i,\min}+\alpha_{\Phi}\widetilde{\gamma}_{i,k}$.
We have $\left(a_{i,k}-a_{i}^{*}\right)^{2}-\left(\widetilde{a}_{i,k}-a_{i}^{*}\right)^{2}\leq\left(a_{i,k}-a_{i}^{*}\right)^{2}=\left(a_{i,\min}+\alpha_{\Phi}\widetilde{\gamma}_{i,k}-a_{i}^{*}\right)^{2}\leq\alpha_{\Phi}^{2}\widetilde{\gamma}_{i,k}^{2}\leq\alpha_{\Phi}^{2}\gamma_{0}^{2}$.
In summary, $\left(a_{i,k}-a_{i}^{*}\right)^{2}-\left(\widetilde{a}_{i,k}-a_{i}^{*}\right)^{2}\leq\alpha_{\Phi}^{2}\gamma_{0}^{2}$
in Case 3.

Based on the above discussions, we have the following bound to conclude
the three cases $\left(a_{i,k}-a_{i}^{*}\right)^{2}-\left(\widetilde{a}_{i,k}-a_{i}^{*}\right)^{2}\leq\alpha_{\Phi}^{2}\gamma_{0}^{2}\delta_{i,k}\iota_{i,k}$
with $\iota_{i,k}$ defined in (\ref{eq:dp}). Finally, we get $\frac{\left\Vert \boldsymbol{a}_{k}-\boldsymbol{a}^{*}\right\Vert ^{2}\!-\!\left\Vert \widetilde{\boldsymbol{a}}_{k}-\boldsymbol{a}^{*}\right\Vert ^{2}}{N}\!\leq\!\frac{\alpha_{\Phi}^{2}\gamma_{0}^{2}}{N}\!\sum_{i\in\mathcal{N}}\delta_{i,k}\iota_{i,k}=\varDelta_{k}$which
concludes the proof.

\subsection{\label{subsec:Proof-of-sum}Proof of Lemma~\ref{lem:gamma_sum}}

We first present an important lemma with its proof in Appendix~F
of our previous work \cite{li2019matrix}. 
\begin{lem}
\label{lem:sum_equi}Consider any sequence $\left\{ x_{k}\right\} $
and let $\overline{x}_{k}=\sum_{\ell=1}^{k}x_{\ell}\left(1-p\right)^{\ell}p^{k-\ell}\binom{k-1}{\ell-1}$
with $p\in\left[0,1\right]$, we always have $\sum_{k=1}^{\infty}\overline{x}_{k}=\sum_{k=1}^{\infty}x_{k}.$
\end{lem}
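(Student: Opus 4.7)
The plan is to prove the identity by interchanging the order of summation and then reducing the inner sum to a negative binomial series. Writing out the double sum,
\[
\sum_{k=1}^{\infty}\overline{x}_k \;=\; \sum_{k=1}^{\infty}\sum_{\ell=1}^{k} x_\ell\,(1-p)^{\ell} p^{k-\ell}\binom{k-1}{\ell-1},
\]
the triangular index set $\{(k,\ell): 1\leq\ell\leq k\}$ is the same as $\{(k,\ell): \ell\geq 1,\; k\geq \ell\}$, so the swap gives
\[
\sum_{k=1}^{\infty}\overline{x}_k \;=\; \sum_{\ell=1}^{\infty} x_\ell\,(1-p)^{\ell} \sum_{k=\ell}^{\infty} p^{k-\ell}\binom{k-1}{\ell-1}.
\]

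Next I would make the substitution $m=k-\ell$ in the inner sum, turning it into $\sum_{m=0}^{\infty}\binom{m+\ell-1}{\ell-1}p^m$. This is precisely the generalized (negative) binomial series, which for $p\in[0,1)$ sums to $(1-p)^{-\ell}$. Plugging this back,
\[
\sum_{k=1}^{\infty}\overline{x}_k \;=\; \sum_{\ell=1}^{\infty} x_\ell\,(1-p)^{\ell}\cdot (1-p)^{-\ell} \;=\; \sum_{\ell=1}^{\infty} x_\ell,
\]
which is the desired equality. The boundary value $p=0$ is trivial because $\overline{x}_k=x_k$ in that case; the boundary value $p=1$ is degenerate (it corresponds to $q_{\mathrm{a}}=0$, nodes never active) and is only used in the paper with $p=1-q_{\mathrm{a}}\in[0,1)$, so it does not arise in applications.

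The one step that needs justification is the exchange of the two summations. For the sequences the paper applies this lemma to (non-negative step-sizes such as $x_k=\beta_k\gamma_k$ and $x_k=\beta_k^2$), all summands are non-negative and Tonelli's theorem applies directly, so the interchange is unconditional. If one wanted a statement for signed sequences, the same rearrangement goes through under absolute summability of $\{x_k\}$ by Fubini; the paper only invokes the lemma for non-negative $x_k$, so this is not a real obstacle. The main potential pitfall is merely bookkeeping on the index sets and remembering to shift the lower limit of the inner sum from $k=\ell$ before changing variables.
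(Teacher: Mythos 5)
Your proof is correct and complete. One remark on the comparison: the paper itself does not prove this lemma in-text; it defers the proof to Appendix~F of the cited earlier work \cite{li2019matrix}, so no line-by-line comparison is possible here. Your route---swapping the order of summation over the triangular index set and collapsing the inner sum $\sum_{m\geq 0}\binom{m+\ell-1}{\ell-1}p^{m}=(1-p)^{-\ell}$ via the negative binomial series---is the natural argument for this identity, and you justify the only delicate step correctly: Tonelli's theorem covers the interchange unconditionally for the non-negative sequences the paper actually feeds into the lemma ($x_k=\beta_k\gamma_k$ and $x_k=\beta_k^2$), including the divergent case $\sum_k\beta_k\gamma_k=\infty$ where both iterated sums equal $+\infty$. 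You also correctly observe two points on which the lemma as literally stated is imprecise: at $p=1$ every $\overline{x}_k$ vanishes so the identity fails unless $\sum_k x_k=0$ (harmless, since the paper uses $p=1-q_{\mathrm{a}}<1$), and for signed sequences one needs absolute summability, which again does not arise in the paper's applications. These caveats strengthen rather than weaken your argument.
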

Replace $x_{k}$ by $\beta_{k}\gamma_{k}$ and $p$ by $1-q_{\textrm{a}}$,
we get that $\sum_{k=1}^{\infty}\overline{\beta\gamma}_{k}=\sum_{k=1}^{\infty}\beta_{k}\gamma_{k}$,
then (\ref{eq:gamma_aver_sum}) can be proved as $\sum_{k=1}^{\infty}\beta_{k}\gamma_{k}\rightarrow\infty$;
Replace $x_{k}$ by $\beta_{k}^{2}$ and $p$ by $1-q_{\textrm{a}}$,
we have $\sum_{k=1}^{\infty}\overline{\beta^{2}}_{k}=\sum_{k=1}^{\infty}\beta_{k}^{2}$.
We can finally justify (\ref{eq:gamma_aversquar}) with the assumption
that $\sum_{k=1}^{\infty}\beta{}_{k}^{2}<\infty$.

\subsection{\label{subsec:Proof-of-bias}Proof of Theorem~\ref{prop:bias}}

This proof contains two parts. We first show that (\ref{eq:bound_d-1})
is an upper bound of $\left|b_{i,k}\right|$ in Appendix~\ref{subsec:Proof-c1},
then we prove that this upper bound is vanishing in Appendix~\ref{subsec:Proof-c2}.

\subsubsection{Proof of (\ref{eq:bound_d-1}) \label{subsec:Proof-c1}}

As $b_{i,k}$ describes the difference between $\overline{g}_{i,k}$
and $\partial F\left(\boldsymbol{a}_{k}\right)/\partial a_{i}$, we
start with the derivation of $\overline{g}_{i,k}$ by successively
taking the expectation of $\widehat{g}_{i,k}$ w.r.t. multiple stochastic
terms $\left(\mathbf{S},\mathcal{I},\boldsymbol{\eta},\boldsymbol{\Phi},\boldsymbol{\delta},\boldsymbol{\ell}\right)$,
which makes the analysis complicated. By definition, we have 
\begin{align}
 & \overline{g}_{i,k}\overset{}{=}\mathbb{E}_{\mathbf{S},\mathcal{I},\boldsymbol{\eta},\boldsymbol{\Phi},\boldsymbol{\delta},\boldsymbol{\ell}}\left(\widetilde{\beta}_{i,k}\Phi_{i,k}\widetilde{f}_{i,k}\left(\boldsymbol{a}_{k}+\widetilde{\boldsymbol{\gamma}}_{k}\circ\boldsymbol{\Phi}_{k},\boldsymbol{\delta}_{k},\mathbf{S}_{k}\right)\right)\nonumber \\
 & \overset{\left(a\right)}{=}\mathbb{E}_{\boldsymbol{\Phi},\boldsymbol{\delta},\boldsymbol{\ell}}\left(\widetilde{\beta}_{i,k}\Phi_{i,k}\mathbb{E}_{\mathbf{S},\mathcal{I},\boldsymbol{\eta}}\left(\widetilde{f}_{i,k}\left(\boldsymbol{a}_{k}+\widetilde{\boldsymbol{\gamma}}_{k}\circ\boldsymbol{\Phi}_{k},\boldsymbol{\delta}_{k},\mathbf{S}_{k}\right)\right)\right)\nonumber \\
 & \overset{\left(b\right)}{=}\mathbb{E}_{\boldsymbol{\Phi},\boldsymbol{\delta},\boldsymbol{\ell}}\left(\widetilde{\beta}_{i,k}\Phi_{i,k}\mathbb{E}_{\mathbf{S}}\left(f\left(\boldsymbol{a}_{k}+\widetilde{\boldsymbol{\gamma}}_{k}\circ\boldsymbol{\Phi}_{k},\boldsymbol{\delta}_{k},\mathbf{S}_{k}\right)\right)\right)\nonumber \\
 & \overset{(c)}{=}\mathbb{E}_{\mathbf{\boldsymbol{\Phi}},\boldsymbol{\delta},\boldsymbol{\ell}}\left(\widetilde{\beta}_{i,k}\Phi_{i,k}G\left(\boldsymbol{a}_{k}+\widetilde{\boldsymbol{\gamma}}_{k}\circ\boldsymbol{\Phi}_{k},\boldsymbol{\delta}_{k}\right)\right)\nonumber \\
 & \overset{\left(d\right)}{=}\mathbb{E}_{\mathbf{\boldsymbol{\Phi}},\boldsymbol{\delta},\boldsymbol{\ell}}\!\!\left(\!\widetilde{\beta}_{i,k}\Phi_{i,k}\!\!\left(\!\!G\left(\boldsymbol{a}_{k},\boldsymbol{\delta}_{k}\right)+\!\sum_{j\in\mathcal{N}}\!\widetilde{\gamma}_{j,k}\Phi_{j,k}\frac{\partial G}{\partial a_{j}}\!\left(\boldsymbol{a}_{k},\boldsymbol{\delta}_{k}\right)\!\!\right)\right.\nonumber \\
 & +\!\left.\frac{\widetilde{\beta}_{i,k}\Phi_{i,k}}{2}\!\!\sum_{j_{1},j_{2}\in\mathcal{N}}\!\widetilde{\gamma}_{j_{1},k}\Phi_{j_{1},k}\widetilde{\gamma}_{j_{2},k}\Phi_{j_{2},k}\frac{\partial^{2}G\left(\mathring{\boldsymbol{a}}_{k},\boldsymbol{\delta}_{k}\right)}{\partial a_{j_{1}}\partial a_{j_{2}}}\!\!\right)\!\!,\label{eq:g_aver1}
\end{align}
where $\left(a\right)$ holds as the stochastic term $\widetilde{\beta}_{i,k}\Phi_{i,k}$
generated during the DOSP-S algorithm is independent of $\left(\mathbf{S}_{k},\mathcal{I}^{\left(i,k\right)},\boldsymbol{\eta}_{k}\right)$
caused by the system environment, the unsuccessful packet transmission
and the measurement noise; $(b)$ is by taking expectation of $\widetilde{f}_{i,k}$
w.r.t. $\left(\mathcal{I}^{\left(i,k\right)},\boldsymbol{\eta}_{k}\right)$,
which has already been solved in Proposition~\ref{lem:aver_f_in};
$\left(c\right)$ is by taking expectation of $f$ w.r.t. $\mathbf{S}$,
recall that $G\left(\boldsymbol{a},\boldsymbol{\delta}\right)=\mathbb{E}_{\mathbf{S}}\left(f\left(\boldsymbol{a},\boldsymbol{\delta},\mathbf{S}\right)\right)$
by definition (\ref{eq:G}); $(d)$ comes from the extension of $G\left(\boldsymbol{a}_{k}+\widetilde{\boldsymbol{\gamma}}_{k}\circ\boldsymbol{\Phi}_{k},\boldsymbol{\delta}_{k}\right)$
by applying Taylor's theorem and mean-valued theorem, \emph{i.e}.,
there exists $\mathring{\boldsymbol{a}}_{k}=\left[\mathring{a}_{1,k},\ldots\mathring{a}_{N,k}\right]^{T}$
with $\mathring{a}_{i,k}\in\left(a_{i,k},a_{i,k}+\widetilde{\gamma}_{i,k}\Phi_{i,k}\right)$,
$\forall i\in\mathcal{N}$, such that $\left(d\right)$ can be satisfied.

We should continue the derivation in (\ref{eq:g_aver1}) by considering
the expectation w.r.t. $\left(\boldsymbol{\Phi}_{k},\boldsymbol{\delta}_{k},\boldsymbol{\ell}_{k}\right)$.
We have 
\begin{equation}
\mathbb{E}_{\mathbf{\boldsymbol{\Phi}},\boldsymbol{\delta},\boldsymbol{\ell}}\left(\widetilde{\beta}_{i,k}\Phi_{i,k}G\left(\boldsymbol{a}_{k},\boldsymbol{\delta}_{k}\right)\right)=0,\label{eq:g_aver2}
\end{equation}
as $\Phi_{i,k}$ is independent of $(\boldsymbol{a}_{k},\boldsymbol{\delta}_{k},\widetilde{\beta}_{i,k})$
and $\mathbb{E}_{\boldsymbol{\Phi}}(\Phi_{i,k})=0$ by Assumption~\ref{Assumption:para}.
Meanwhile, 
\begin{align}
 & \mathbb{E}_{\mathbf{\boldsymbol{\Phi}},\boldsymbol{\delta},\boldsymbol{\ell}}\left(\widetilde{\beta}_{i,k}\Phi_{i,k}\sum_{j\in\mathcal{N}}\widetilde{\gamma}_{j,k}\Phi_{j,k}\frac{\partial G}{\partial a_{j}}\left(\boldsymbol{a}_{k},\boldsymbol{\delta}_{k}\right)\right)\nonumber \\
 & \overset{\left(a\right)}{=}\sigma_{\Phi}^{2}\mathbb{E}_{\boldsymbol{\delta},\boldsymbol{\ell}}\left(\delta_{i,k}^{2}\beta_{\ell_{i,k}}\gamma_{\ell_{i,k}}\frac{\partial G}{\partial a_{i}}\left(\boldsymbol{a}_{k},\boldsymbol{\delta}_{k}\right)\right)+0\nonumber \\
 & \overset{\left(b\right)}{=}\sigma_{\Phi}^{2}\mathbb{P}\left(\delta_{i,k}=1\right)\mathbb{E}_{\boldsymbol{\delta},\boldsymbol{\ell}}\!\left(\!\beta_{\ell_{i,k}}\gamma_{\ell_{i,k}}\left.\frac{\partial G}{\partial a_{i}}\left(\boldsymbol{a}_{k},\boldsymbol{\delta}_{k}\right)\right|\delta_{i,k}=1\!\right)\!+0\nonumber \\
 & \overset{\left(c\right)}{=}\sigma_{\Phi}^{2}\mathbb{E}_{\widetilde{\boldsymbol{\ell}}}\!\left(\!\beta_{1+\widetilde{\ell}_{i,k}}\gamma_{1+\widetilde{\ell}_{i,k}}\!\right)\!\mathbb{E}_{\boldsymbol{\delta}}\!\!\left(\!\!\left.\frac{\partial G}{\partial a_{i}}\!\left(\boldsymbol{a}_{k},\!\boldsymbol{\delta}_{k}\right)\right|\!\delta_{i,k}\!=\!1\!\!\right)\!\mathbb{P}\!\left(\delta_{i,k}=1\right)\nonumber \\
 & \overset{\left(d\right)}{=}\sigma_{\Phi}^{2}\mathbb{E}_{\widetilde{\boldsymbol{\ell}}}\!\left(\!\beta_{1+\widetilde{\ell}_{i,k}}\gamma_{1+\widetilde{\ell}_{i,k}}\!\right)\!\mathbb{E}_{\boldsymbol{\delta}}\!\!\left(\!\frac{\partial G}{\partial a_{i}}\left(\boldsymbol{a}_{k},\boldsymbol{\delta}_{k}\right)\!\!\right)\nonumber \\
 & \overset{\left(e\right)}{=}\sigma_{\Phi}^{2}q_{\mathrm{a}}^{-1}\overline{\beta\gamma}_{k}\frac{\partial F}{\partial a_{i}}\left(\boldsymbol{a}_{k}\right),\label{eq:g_aver3}
\end{align}
in which $\left(a\right)$ is again by Assumption~\ref{Assumption:para},
\emph{i.e.}, $\mathbb{E}_{\boldsymbol{\Phi}}(\Phi_{i,k}^{2})=\sigma_{\Phi}^{2}$
and $\mathbb{E}_{\boldsymbol{\Phi}}(\Phi_{i,k}\Phi_{j,k})=0$ $\forall j\neq i$;
$\left(b\right)$ comes from $\mathbb{E}_{\boldsymbol{\delta},\boldsymbol{\ell}}\left(\delta_{i,k}^{2}\beta_{\ell_{i,k}}\gamma_{\ell_{i,k}}\frac{\partial G}{\partial a_{i}}\left(\boldsymbol{a}_{k},\boldsymbol{\delta}_{k}\right)\mid\delta_{i,k}=0\right)=0$;
$\left(c\right)$ is by the independence of $\boldsymbol{\delta}_{k}$
and $\widetilde{\boldsymbol{\ell}}$; $\left(d\right)$ holds as $\mathbb{E}_{\boldsymbol{\delta}}\!\!\left(\!\frac{\partial G}{\partial a_{i}}\!\left(\boldsymbol{a}_{k},\boldsymbol{\delta}_{k}\right)\!\right)\!\!=\mathbb{E}_{\boldsymbol{\delta}}\left(\left.\frac{\partial G}{\partial a_{i}}\left(\boldsymbol{a}_{k},\boldsymbol{\delta}_{k}\right)\right|\delta_{i,k}=1\right)\mathbb{P}\left(\delta_{i,k}=1\right),$
note that $\frac{\partial G}{\partial a_{i}}\left(\boldsymbol{a}_{k},\boldsymbol{\delta}_{k}\right)=0$
in the case where $\delta_{i,k}=0$ meaning that $G$ is not a function
of $a_{i,k}$; $\left(e\right)$ comes from $\overline{\beta\gamma}_{k}=\mathbb{P}\left(\delta_{i,k}=1\right)\mathbb{E}_{\widetilde{\boldsymbol{\ell}}}(\beta_{1+\widetilde{\ell}_{i,k}}\gamma_{1+\widetilde{\ell}_{i,k}})$
and from the relation between $F$ and $G$ discussed in (\ref{eq:F_G}),
we have $\frac{\partial F}{\partial a_{i}}\left(\boldsymbol{a}_{k}\right)=\frac{\partial}{\partial a_{i}}\left(\sum_{\boldsymbol{\delta}_{k}\in\mathcal{D}}q_{\mathrm{a}}^{n_{k}}\left(1-q_{\mathrm{a}}\right)^{N-n_{k}}G\left(\boldsymbol{a}_{k},\boldsymbol{\delta}_{k}\right)\right)=\!\sum_{\boldsymbol{\delta}_{k}\in\mathcal{D}}\!q_{\mathrm{a}}^{n_{k}}\!\left(1-q_{\mathrm{a}}\right)^{N-n_{k}}\!\frac{\partial}{\partial a_{i}}G\left(\boldsymbol{a}_{k},\boldsymbol{\delta}_{k}\right)\!=\!\mathbb{E}_{\boldsymbol{\delta}}\!\left(\!\frac{\partial G}{\partial a_{i}}\left(\boldsymbol{a}_{k},\boldsymbol{\delta}_{k}\right)\!\right).$Substituting
(\ref{eq:g_aver2}) and (\ref{eq:g_aver3}) into (\ref{eq:g_aver1}),
we get $\overline{g}_{i,k}=\sigma_{\Phi}^{2}q_{\mathrm{a}}^{-1}\overline{\beta\gamma}_{k}\left(\frac{\partial F}{\partial a_{i}}\left(\boldsymbol{a}_{k}\right)+b_{i,k}\right)$
with the bias term $b_{i,k}=$ 
\begin{equation}
\sum_{j_{1},j_{2}\in\mathcal{N}}\!\!\mathbb{E}_{\mathbf{\boldsymbol{\Phi}},\boldsymbol{\delta},\boldsymbol{\ell}}\!\left(\!\frac{q_{\mathrm{a}}\widetilde{\beta}_{i,k}\widetilde{\gamma}_{j_{1},k}\widetilde{\gamma}_{j_{2},k}\Phi_{i,k}\Phi_{j_{1},k}\Phi_{j_{2},k}}{2\sigma_{\Phi}^{2}\overline{\beta\gamma}_{k}}\frac{\partial^{2}G\left(\widetilde{\boldsymbol{a}}_{k},\boldsymbol{\delta}_{k}\right)}{\partial a_{j_{1}}\partial a_{j_{2}}}\!\right)\label{eq:bound_bik}
\end{equation}
As $\left|\frac{\partial^{2}G\left(\widetilde{\boldsymbol{a}}_{k},\boldsymbol{\delta}_{k}\right)}{\partial a_{j_{1}}\partial a_{j_{2}}}\right|\leq\alpha_{G}$
(by Assumption~3) and $\left|\Phi_{i,k}\right|\leq\alpha_{\Phi}$,
$\forall i\in\mathcal{N}$ (by Assumption~5), it is straightforward
to get 
\begin{align*}
 & \left|b_{i,k}\right|\leq\frac{\alpha_{\Phi}^{3}\alpha_{G}}{2\sigma_{\Phi}^{2}}\!\frac{\sum_{j_{1},j_{2}\in\mathcal{N}}\!\mathbb{E}_{\boldsymbol{\delta},\boldsymbol{\ell}}\!\left(\widetilde{\beta}_{i,k}\widetilde{\gamma}_{j_{1},k}\widetilde{\gamma}_{j_{2},k}\right)}{q_{\mathrm{a}}^{-1}\overline{\beta\gamma}_{k}}=\frac{\alpha_{\Phi}^{3}\alpha_{G}}{2\sigma_{\Phi}^{2}}w_{i,k}.
\end{align*}
Therefore, $b_{i,k}$ in (\ref{eq:bound_bik}) can be bounded by (\ref{eq:bound_d-1})
with $w_{i,k}$ defined in (\ref{eq:w}), which concludes the first
part of the proof.

\subsubsection{Proof of $\left|b_{i,k}\right|\rightarrow0$ \label{subsec:Proof-c2}}

Our next target is to show $w_{i,k}\rightarrow0$, from which we can
directly get $\left|b_{i,k}\right|\rightarrow0$. The proof is quite
challenging, as $w_{i,k}$ contains a summation of $N^{2}$ terms
of expectation whose closed form expression are hard to obtain. Moreover,
the denominator of $w_{i,k}$ is vanishing, \emph{i.e.}, $\overline{\beta\gamma}_{k}\rightarrow0$.

Denote $\mathcal{N}_{-i}=\mathcal{N}\setminus\left\{ i\right\} $,
we evaluate the numerator of $w_{i,k}$: 
\begin{align}
 & \sum_{j_{1},j_{2}\in\mathcal{N}}\!\mathbb{E}_{\boldsymbol{\delta},\boldsymbol{\ell}}\!\left(\widetilde{\beta}_{i,k}\widetilde{\gamma}_{j_{1},k}\widetilde{\gamma}_{j_{2},k}\right)\!=\!\sum_{\stackrel{j_{1},j_{2}\in\mathcal{N}_{-i}}{j_{1}\neq j_{2}}}\!\mathbb{E}_{\boldsymbol{\delta},\boldsymbol{\ell}}\!\left(\widetilde{\beta}_{i,k}\widetilde{\gamma}_{j_{1},k}\widetilde{\gamma}_{j_{2},k}\right)\nonumber \\
 & +\sum_{j\in\mathcal{N}_{-i}}\mathbb{E}_{\boldsymbol{\delta},\boldsymbol{\ell}}\left(2\widetilde{\beta}_{i,k}\widetilde{\gamma}_{i,k}\widetilde{\gamma}_{j,k}+\widetilde{\beta}_{i,k}\widetilde{\gamma}_{j,k}^{2}\right)+\mathbb{E}_{\boldsymbol{\delta},\boldsymbol{\ell}}\!\left(\widetilde{\beta}_{i,k}\widetilde{\gamma}_{i,k}^{2}\right)\nonumber \\
 & =\!\left(N\!-1\right)\!\left(\!\left(N\!-2\right)\overline{\beta}_{k}\overline{\gamma}_{k}^{2}\!+\!2\overline{\beta\gamma}_{k}\overline{\gamma}_{k}\!+\overline{\beta}_{k}\overline{\gamma^{2}}_{k}\!\right)\!+\overline{\beta\gamma^{2}}_{k},\label{eq:step_aver_sum}
\end{align}
where $\overline{\beta\gamma^{2}}_{k}$, $\overline{\gamma}_{k}$,
$\overline{\gamma^{2}}_{k}$, and $\overline{\beta}_{k}$ are defined
in (\ref{eq:step_aver_def}). From (\ref{eq:step_aver_sum}) and the
fact that $\overline{\gamma}_{k}^{2}\leq\overline{\gamma^{2}}_{k}$,
$w_{i,k}$ can be bounded by 
\begin{align}
w_{i,k} & \leq\frac{\overline{\beta\gamma^{2}}_{k}+2\left(N-1\right)\overline{\beta\gamma}_{k}\overline{\gamma}_{k}+\left(N-1\right)^{2}\overline{\beta}_{k}\overline{\gamma^{2}}_{k}}{q_{\mathrm{a}}^{-1}\overline{\beta\gamma}_{k}}\nonumber \\
 & <2\lambda\overline{\gamma}_{k}+\frac{\overline{\beta\gamma^{2}}_{k}+\left(N-1\right)^{2}\overline{\beta}_{k}\overline{\gamma^{2}}_{k}}{q_{\mathrm{a}}^{-1}\overline{\beta\gamma}_{k}},\label{eq:w_bound1}
\end{align}
note that $\left(N-1\right)q_{\mathrm{a}}<Nq_{\mathrm{a}}=\lambda$.
The following lemma is useful to find upper bounds of $\overline{\gamma}_{k}$,
$\overline{\beta\gamma^{2}}_{k}$ and $\overline{\beta}_{k}\overline{\gamma^{2}}_{k}$. 
\begin{lem}
\label{lem:bound_chern}Consider an arbitrary positive decreasing
sequence $\left\{ z_{k}\right\} $ and an arbitrary $0<\xi<1$. Denote
\begin{align}
p_{k,\xi} & =\exp\left(-2^{-1}\xi^{2}q_{\mathrm{a}}\left(k-1\right)\right),\\
\overline{k}_{\xi} & =\left\lfloor \left(1-\xi\right)q_{\mathrm{a}}\left(k-1\right)\right\rfloor +2.
\end{align}
Then we have 
\begin{align}
\mathbb{E}_{\boldsymbol{\delta},\boldsymbol{\ell}}\left(\delta_{i,k}z_{\ell_{i,k}}\right) & \leq q_{\mathrm{a}}\left(p_{k,\xi}z_{1}+z_{\overline{k}_{\xi}}\right).
\end{align}
\end{lem}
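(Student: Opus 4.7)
The plan is to exploit the explicit distribution of $\ell_{i,k}$ given in the algorithm, split the expectation according to whether the Binomial count lies in its typical range, and apply a Chernoff lower-tail bound to the rare event.

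First I would reduce the expectation over $\boldsymbol{\delta}$. Since $\delta_{i,k}$ is Bernoulli$(q_{\mathrm{a}})$ and independent of $\widetilde{\ell}_{i,k}$, and since $\ell_{i,k} = \widetilde{\ell}_{i,k} + \delta_{i,k}$, the factor $\delta_{i,k}$ forces the conditioning $\delta_{i,k}=1$, giving
\[
\mathbb{E}_{\boldsymbol{\delta},\boldsymbol{\ell}}\bigl(\delta_{i,k}z_{\ell_{i,k}}\bigr) = q_{\mathrm{a}}\,\mathbb{E}_{\widetilde{\ell}}\bigl(z_{\widetilde{\ell}_{i,k}+1}\bigr),
\]
with $\widetilde{\ell}_{i,k}\sim\mathcal{B}(k-1,q_{\mathrm{a}})$.

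Next I would split the remaining expectation via the indicator of the event $\{\widetilde{\ell}_{i,k}+1 \geq \overline{k}_\xi\}$. On this event, monotonicity of $\{z_k\}$ yields $z_{\widetilde{\ell}_{i,k}+1}\leq z_{\overline{k}_\xi}$, contributing at most $z_{\overline{k}_\xi}$ to the expectation. On the complementary event $\{\widetilde{\ell}_{i,k}+1 < \overline{k}_\xi\}$, positivity and monotonicity give $z_{\widetilde{\ell}_{i,k}+1}\leq z_1$, so this part contributes at most $z_1\,\mathbb{P}(\widetilde{\ell}_{i,k}+1<\overline{k}_\xi)$. By definition of $\overline{k}_\xi=\lfloor(1-\xi)q_{\mathrm{a}}(k-1)\rfloor+2$, the inequality $\widetilde{\ell}_{i,k}+1<\overline{k}_\xi$ implies $\widetilde{\ell}_{i,k}\leq (1-\xi)q_{\mathrm{a}}(k-1)$.

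The main step is then a standard Chernoff lower-tail bound for the Binomial: with $\mu=(k-1)q_{\mathrm{a}}$,
\[
\mathbb{P}\bigl(\widetilde{\ell}_{i,k}\leq(1-\xi)\mu\bigr)\leq \exp\bigl(-\tfrac{1}{2}\xi^2 q_{\mathrm{a}}(k-1)\bigr)=p_{k,\xi}.
\]
Combining the two pieces gives $\mathbb{E}_{\widetilde{\ell}}(z_{\widetilde{\ell}_{i,k}+1})\leq p_{k,\xi}z_1+z_{\overline{k}_\xi}$, and multiplying by $q_{\mathrm{a}}$ concludes the proof. There is no serious obstacle here: the only subtlety is to track the off-by-one arising from $\ell_{i,k}=\widetilde{\ell}_{i,k}+\delta_{i,k}$ and to ensure the floor in the definition of $\overline{k}_\xi$ lines up correctly with the Chernoff threshold $(1-\xi)q_{\mathrm{a}}(k-1)$, which the chosen constants $+2$ in $\overline{k}_\xi$ are designed to accommodate.
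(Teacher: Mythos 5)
Your proposal is correct and follows essentially the same route as the paper's own proof: condition on $\delta_{i,k}=1$ to reduce to an expectation over $\widetilde{\ell}_{i,k}\sim\mathcal{B}(k-1,q_{\mathrm{a}})$, split at the threshold $\overline{k}_{\xi}$ using monotonicity of $\{z_{k}\}$, bound the rare-event probability by the Chernoff lower-tail bound and the typical-event probability by $1$. The only cosmetic difference is that the paper writes the split as $\{\widetilde{\ell}_{i,k}\leq\overline{k}_{\xi}-2\}$ versus $\{\widetilde{\ell}_{i,k}\geq\overline{k}_{\xi}-1\}$, which is the same partition you use.
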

\begin{proof}We have, 
\begin{align}
 & \mathbb{E}_{\boldsymbol{\delta},\boldsymbol{\ell}}\left(\delta_{i,k}z_{\ell_{i,k}}\right)=\mathbb{P}\left(\delta_{i,k}=1\right)\mathbb{E}_{\boldsymbol{\delta},\boldsymbol{\ell}}(z_{\widetilde{\ell}_{i,k}+1}\mid\delta_{i,k}=1)\nonumber \\
 & =q_{\mathrm{a}}\sum_{\ell=0}^{k-1}\mathbb{P}\left(\widetilde{\ell}_{i,k}=\ell\right)z_{\ell+1}\overset{\left(a\right)}{\leq}q_{\mathrm{a}}z_{1}\mathbb{P}\left(\widetilde{\ell}_{i,k}\leq\overline{k}_{\xi}-2\right)\nonumber \\
 & +q_{\mathrm{a}}\gamma_{\overline{k}_{\xi}}\mathbb{P}\left(\widetilde{\ell}_{i,k}\geq\overline{k}_{\xi}-1\right)\overset{\left(b\right)}{<}q_{\mathrm{a}}\left(p_{k,\xi}z_{1}+z_{\overline{k}_{\xi}}\right),\label{eq:aver2-1}
\end{align}
in which $(a)$ is by the fact that $\gamma_{\ell}$ is a decreasing
sequence; $(b)$ is obtained by using Chernoff Bound, \emph{i.e.},
\begin{align}
\mathbb{P}\left(\widetilde{\ell}_{i,k}\leq\overline{k}_{\xi}-2\right) & =\mathbb{P}\left(\widetilde{\ell}_{i,k}\leq\left\lfloor \left(1-\xi\right)q_{\mathrm{a}}\left(k-1\right)\right\rfloor \right)\nonumber \\
 & \leq e^{-\frac{1}{2}\xi^{2}\mathbb{E}\left(\widetilde{\ell}_{i,k}\right)}=p_{k,\xi},
\end{align}
and by $\mathbb{P}(\widetilde{\ell}_{i,k}\geq\overline{k}_{\xi}-1)<1$,
which concludes the proof. \end{proof}

Applying Lemma~\ref{lem:bound_chern}, we can obtain the following
bounds 
\begin{equation}
\begin{array}{cc}
\overline{\gamma}_{k}<q_{\mathrm{a}}(p_{k,\xi}\gamma_{1}+\gamma_{\overline{k}_{\xi}}); & \overline{\gamma^{2}}_{k}<q_{\mathrm{a}}(p_{k,\xi}\gamma_{1}^{2}+\gamma_{\overline{k}_{\xi}}^{2});\\
\overline{\beta}_{k}<q_{\mathrm{a}}(p_{k,\xi}\beta_{1}+\beta_{\overline{k}_{\xi}}); & \overline{\beta\gamma^{2}}_{k}<q_{\mathrm{a}}(p_{k,\xi}\beta_{1}\gamma_{1}^{2}+\beta_{\overline{k}_{\xi}}\gamma_{\overline{k}_{\xi}}^{2}).
\end{array}\label{eq:bounds_aver1}
\end{equation}
As $p_{k,\xi}$, $\beta_{k}$, and $\gamma_{k}$ are vanishing, (\ref{eq:bounds_aver1})
implies that $\overline{\gamma}_{k}\rightarrow0$, $\overline{\gamma^{2}}_{k}\rightarrow0$,
$\overline{\beta}_{k}\rightarrow0$, and $\overline{\beta\gamma^{2}}_{k}\rightarrow0$.

Applying the upper bounds in (\ref{eq:bounds_aver1}), we have 
\begin{align}
 & \overline{\beta\gamma^{2}}_{k}+\left(N-1\right)^{2}\overline{\beta}_{k}\overline{\gamma^{2}}_{k}<q_{\mathrm{a}}\left(p_{k,\xi}\beta_{1}\gamma_{1}^{2}+\beta_{\overline{k}_{\xi}}\gamma_{\overline{k}_{\xi}}^{2}\right)\nonumber \\
 & \quad\qquad+\left(N-1\right)^{2}q_{\mathrm{a}}^{2}\left(p_{k,\xi}\beta_{1}+\beta_{\overline{k}_{\xi}}\right)\left(p_{k,\xi}\gamma_{1}^{2}+\gamma_{\overline{k}_{\xi}}^{2}\right)\nonumber \\
 & <\left(\lambda^{2}\left(p_{k,\xi}+2\right)+q_{\mathrm{a}}\right)\beta_{1}\gamma_{1}^{2}p_{k,\xi}+\left(\lambda^{2}+q_{\mathrm{a}}\right)\beta_{\overline{k}_{\xi}}\gamma_{\overline{k}_{\xi}}^{2}\nonumber \\
 & <\left(3\lambda^{2}+q_{\mathrm{a}}\right)\beta_{1}\gamma_{1}^{2}p_{k,\xi}+\left(\lambda^{2}+q_{\mathrm{a}}\right)\beta_{\overline{k}_{\xi}}\gamma_{\overline{k}_{\xi}}^{2},\label{eq:w_bound2}
\end{align}
where the upper bound is by $\gamma_{\overline{k}_{\xi}}<\gamma_{1}$
and $\beta_{\overline{k}_{\xi}}<\beta_{1}$, as $\overline{k}_{\xi}=\left\lfloor \left(1-\xi\right)q\left(k-1\right)\right\rfloor +2>1$.

Meanwhile, thanks to the fact that $\beta_{k}\gamma_{k}$ is a convex
function of $k$, we can apply Jensen's inequality to get the lower
bound 
\begin{align}
\overline{\beta\gamma}_{k} & =q_{\mathrm{a}}\mathbb{E}_{\widetilde{\ell}}\left(\beta_{1+\widetilde{\ell}_{i,k}}\gamma_{1+\widetilde{\ell}_{i,k}}\right)\geq q_{\mathrm{a}}\beta_{\overline{k}'}\gamma_{\overline{k}'},\label{eq:low_step}
\end{align}
in which we denote $\overline{k}'=1+\mathbb{E}\left(\widetilde{\ell}_{i,k}\right)=1+q_{\mathrm{a}}\left(k-1\right)$.
Note that $\beta_{\overline{k}'}$ and $\gamma_{\overline{k}'}$ represent
functions of $\overline{k}'\in\mathbb{R}^{+}$, e.g., $\beta_{\overline{k}'}=\beta_{0}(\overline{k}')^{-c_{1}}$.
Here we slightly abuse the notation as $\{\beta_{\ell}\}$ and $\{\gamma_{\ell}\}$
are initially defined as sequences with integer index.

From (\ref{eq:w_bound1}), (\ref{eq:bounds_aver1}), (\ref{eq:w_bound2}),
and (\ref{eq:low_step}), we have $w_{i,k}<\Omega_{k}$ with 
\begin{align}
\Omega_{k} & =\left(3\lambda^{2}+q_{\mathrm{a}}\right)\beta_{1}\gamma_{1}^{2}\frac{p_{k,\xi}}{\beta_{\overline{k}'}\gamma_{\overline{k}'}}+\left(\lambda^{2}+q_{\mathrm{a}}\right)\frac{\beta_{\overline{k}_{\xi}}\gamma_{\overline{k}_{\xi}}^{2}}{\beta_{\overline{k}'}\gamma_{\overline{k}'}}\nonumber \\
 & \qquad\qquad\qquad\qquad+2\lambda q_{\mathrm{a}}\left(\gamma_{1}p_{k,\xi}+\gamma_{\overline{k}_{\xi}}\right).\label{eq:omega}
\end{align}

The last step is to show that $\Omega_{k}\rightarrow0$ considering
$\beta_{k}=\beta_{0}k^{-c_{1}}$ and $\gamma_{k}=\gamma_{0}k^{-c_{2}}$.
Since $\lambda<\infty$, $q_{\mathrm{a}}\leq1$, $p_{k,\xi}\rightarrow0$
and $\gamma_{\overline{k}_{\xi}}\rightarrow0$, we mainly need to
check whether $\frac{p_{k,\xi}}{\beta_{\overline{k}'}\gamma_{\overline{k}'}}$
and $\frac{\beta_{\overline{k}_{\xi}}\gamma_{\overline{k}_{\xi}}^{2}}{\beta_{\overline{k}'}\gamma_{\overline{k}'}}$
are vanishing. In fact, we have 
\begin{align*}
\lim_{k\rightarrow\infty}\frac{p_{k,\xi}}{\beta_{\overline{k}'}\gamma_{\overline{k}'}} & =\lim_{k\rightarrow\infty}\frac{\exp\left(-2^{-1}\xi^{2}q_{\mathrm{a}}k\right)}{\beta_{0}\gamma_{0}\left(1+\left\lfloor q_{\mathrm{a}}\left(k-1\right)\right\rfloor \right)^{-c_{1}-c_{2}}}=0,
\end{align*}
since the exponential term decreases much faster than $k^{-c_{1}-c_{2}}$.
Meanwhile, we have 
\begin{align}
 & \frac{\beta_{\overline{k}_{\xi}}\gamma_{\overline{k}_{\xi}}^{2}}{\beta_{\overline{k}'}\gamma_{\overline{k}'}}=\frac{\beta_{0}\gamma_{0}\left(\left\lfloor \left(1-\xi\right)q_{\mathrm{a}}\left(k-1\right)\right\rfloor +2\right)^{-c_{1}-c_{2}}}{\beta_{0}\gamma_{0}\left(q_{\mathrm{a}}\left(k-1\right)+1\right)^{-c_{1}-c_{2}}}\gamma_{\overline{k}_{\xi}}\nonumber \\
 & \overset{\left(a\right)}{<}\!\frac{\left(\left(1-\xi\right)q_{\mathrm{a}}\!\left(k-1\right)\!+1\right)^{-c_{1}-c_{2}}}{\left(q_{\mathrm{a}}\left(k-1\right)+1\right)^{-c_{1}-c_{2}}}\gamma_{\overline{k}_{\xi}}\!\!\overset{\left(b\right)}{<}\!\frac{\gamma_{\overline{k}_{\xi}}}{\left(1-\xi\right)^{c_{1}+c_{2}}},\label{eq:bound_ra}
\end{align}
where $\left(a\right)$ is by $\left\lfloor x\right\rfloor >x-1$,
$\forall x>0$; $\left(b\right)$ holds for any $\xi\in\left(0,1\right)$
and $k\geq1$, as 
\begin{align*}
\frac{\left(1-\xi\right)q_{\mathrm{a}}\left(k-1\right)+1}{q_{\mathrm{a}}\left(k-1\right)+1} & =1-\xi+\frac{\xi}{q_{\mathrm{a}}\left(k-1\right)+1}>1-\xi.
\end{align*}
From (\ref{eq:bound_ra}), we finally have 
\begin{equation}
\lim_{k\rightarrow\infty}\frac{\beta_{\overline{k}_{\xi}}\gamma_{\overline{k}_{\xi}}^{2}}{\beta_{\overline{k}'}\gamma_{\overline{k}'}}\leq\lim_{k\rightarrow\infty}\frac{\gamma_{\overline{k}_{\xi}}}{\left(1-\xi\right)^{c_{1}+c_{2}}}=0.
\end{equation}
We have shown that each term of $\Omega_{k}$ in (\ref{eq:omega})
is vanishing, hence $\Omega_{k}\rightarrow0$ implying that $w_{i,k}\rightarrow0$
and $\left|b_{i,k}\right|\rightarrow0$.

\subsection{\label{subsec:Proof-sketch-of_e}Proof Proposition~\ref{lem:stochastic_noise}}

We first show that $\{\sum_{k=K}^{K'}\left(\boldsymbol{a}_{k}-\boldsymbol{a}^{*}\right)^{T}\cdot\boldsymbol{e}_{k}\}_{K'\geq K}$
is martingale, then apply Doob's martingale inequality \cite{doob1953stochastic}
to prove Proposition~\ref{lem:stochastic_noise}. In order to lighten
the notations, we introduce $\mathcal{F}_{k}=\left\{ \mathbf{S}_{k},\boldsymbol{\Phi}_{k},\mathcal{I}_{k},\boldsymbol{\eta}_{k},\boldsymbol{\delta}_{k},\boldsymbol{\ell}_{k}\right\} $
to denote the collection of all stochastic terms.

The noise term $\boldsymbol{e}_{k}$ has zero mean, since $\mathbb{E}_{\mathcal{F}}(\boldsymbol{e}_{k})=\mathbb{E}_{\mathcal{F}}(\widehat{\boldsymbol{g}}_{k}-\overline{\boldsymbol{g}}_{k})=\overline{\boldsymbol{g}}_{k}-\overline{\boldsymbol{g}}_{k}=\mathbf{0}$,
$\forall\boldsymbol{a}_{k}\in\mathcal{A}$. Due to the independence
of $\mathcal{F}_{k}$ and $\mathcal{F}_{k'}$ for any $k\neq k'$,
$\boldsymbol{e}_{k}$ and $\boldsymbol{e}_{k'}$ are independent.
Hence, the sequence $\{\sum_{k=K}^{K'}\left(\boldsymbol{a}_{k}-\boldsymbol{a}^{*}\right)^{T}\cdot\boldsymbol{e}_{k}\}_{K'\geq K}$
is martingale. We apply Doob's martingale inequality to get, $\forall\rho>0$,
\begin{align}
 & \mathbb{P}\!\left(\!\!\sup_{K'\geq K}\!\left\Vert \frac{1}{N}\sum_{k=K}^{K'}\!\left(\boldsymbol{a}_{k}\!-\!\boldsymbol{a}^{*}\right)^{T}\!\!\cdot\!\boldsymbol{e}_{k}\right\Vert \!\geq\!\rho\!\right)\nonumber \\
 & \leq\frac{1}{\rho^{2}N^{2}}\mathbb{E}_{\mathcal{F}}\!\left(\left\Vert \sum_{k=K}^{K'}\!\left(\boldsymbol{a}_{k}\!-\!\boldsymbol{a}^{*}\right)^{T}\!\!\cdot\!\boldsymbol{e}_{k}\right\Vert ^{2}\right).\label{eq:temp1}
\end{align}
We need to evaluate 
\begin{align}
 & \mathbb{E}_{\mathcal{F}}\!\left(\left\Vert \sum_{k=K}^{K'}\!\left(\boldsymbol{a}_{k}\!-\!\boldsymbol{a}^{*}\right)^{T}\!\!\cdot\!\boldsymbol{e}_{k}\right\Vert ^{2}\right)\!\overset{(a)}{=}\!\sum_{k=K}^{K'}\!\mathbb{E}_{\mathcal{F}}\!\left(\!\left\Vert \left(\boldsymbol{a}_{k}\!-\!\boldsymbol{a}^{*}\right)^{T}\!\!\cdot\!\boldsymbol{e}_{k}\right\Vert ^{2}\right)\nonumber \\
 & \overset{\left(b\right)}{\leq}\!\sum_{k=K}^{K'}\!\mathbb{E}_{\mathcal{F}}\!\left(\!\left\Vert \boldsymbol{a}_{k}\!-\!\boldsymbol{a}^{*}\right\Vert ^{2}\!\left\Vert \boldsymbol{e}_{k}\right\Vert ^{2}\!\right)\!\overset{\left(c\right)}{\leq}\!Nd_{\max}^{2}\!\sum_{k=K}^{K'}\!\mathbb{E}_{\mathcal{F}}\!\left(\!\left\Vert \widehat{\boldsymbol{g}}_{k}\!-\!\overline{\boldsymbol{g}}_{k}\right\Vert ^{2}\!\right)\nonumber \\
 & \leq\!Nd_{\max}^{2}\!\sum_{k=K}^{K'}\!\mathbb{E}_{\mathcal{F}}\!\left(\!\left\Vert \widehat{\boldsymbol{g}}_{k}\right\Vert ^{2}\!\right)\overset{\left(d\right)}{\leq}N^{2}d_{\max}^{2}C'\sum_{k=K}^{K'}\overline{\beta^{2}}_{k}\label{eq:temp2}
\end{align}
where $(a)$ comes from $\mathbb{E}(e_{i,k_{1}}e_{i,k_{2}})=0$ for
any $k_{1}\neq k_{2}$; $\left(b\right)$ is by Cauchy\textendash Schwarz
inequality; in $\left(c\right)$ we denote $d_{\max}^{2}=\max_{i\in\mathcal{N}}\{(a_{i,\max}-a_{i,\min})^{2}$\},
then we have $\left\Vert \boldsymbol{a}_{k}-\boldsymbol{a}^{*}\right\Vert ^{2}\leq Nd_{\max}^{2}$,
recall that $a_{i,k}\in[a_{i,\min},a_{i,\max}]$, $\forall i\in\mathcal{N}$;
$\left(d\right)$ is by Lemma~\ref{lem:bound_g} stated in what follows,
of which the proof is given in Appendix~~\ref{subsec:Proof-of-gbound}. 
\begin{lem}
\label{lem:bound_g}If all the assumptions are satisfied, then $\mathbb{E}_{\mathbf{S},\boldsymbol{\Phi},\mathcal{I},\boldsymbol{\eta},\boldsymbol{\delta},\boldsymbol{\ell}}(\left\Vert \widehat{\boldsymbol{g}}_{k}\right\Vert ^{2})<NC'\overline{\beta^{2}}_{k}$,
with $C'=(1+q_{\mathrm{r}}^{-1}\lambda)\sigma_{\Phi}^{2}\sigma_{\eta}^{2}+(1+(2q_{\mathrm{r}}^{-1}+5)\lambda+(q_{\mathrm{r}}^{-1}+5)\lambda^{2}+\lambda^{3})L^{2}\sigma_{\Phi}^{2}\sigma_{\boldsymbol{a}}^{2}<\infty$. 
\end{lem}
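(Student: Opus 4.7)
The plan is to expand $\|\widehat{\boldsymbol{g}}_k\|^2 = \sum_{i\in\mathcal{N}} \widetilde{\beta}_{i,k}^2 \Phi_{i,k}^2 \widetilde{f}_{i,k}^2$ and peel off the stochastic variables in the order $(\widetilde{\ell}_{i,k},\boldsymbol{\eta},\mathcal{I},\boldsymbol{\Phi},\mathbf{S},\boldsymbol{\delta})$. First, since $\widetilde{\beta}_{i,k}^{2}=\delta_{i,k}\beta_{\ell_{i,k}}^{2}$ is measurable with respect to $(\delta_{i,k},\widetilde{\ell}_{i,k})$ and $\widetilde{\ell}_{i,k}$ is drawn independently of all other randomness, the expectation factors as $\mathbb{E}(\widetilde{\beta}_{i,k}^{2}\Phi_{i,k}^{2}\widetilde{f}_{i,k}^{2})=\overline{\beta^{2}}_{k}\cdot\mathbb{E}(\Phi_{i,k}^{2}\widetilde{f}_{i,k}^{2}\mid\delta_{i,k}=1)$; by node symmetry the claim reduces to establishing $\mathbb{E}(\Phi_{i,k}^{2}\widetilde{f}_{i,k}^{2}\mid\delta_{i,k}=1)<C'$.

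The crucial observation is that the projection (\ref{eq:projection-1}) keeps $\boldsymbol{a}_{k}+\widetilde{\boldsymbol{\gamma}}_{k}\circ\boldsymbol{\Phi}_{k}\in\mathcal{A}$ \emph{pointwise} in $\boldsymbol{\Phi}$, so Assumption~\ref{Assumption:lipc} yields the uniform envelope $u_{j}^{2}\leq L_{\mathbf{S}}^{2}n_{k}\sigma_{\boldsymbol{a}}^{2}$ for every active $j$. Because any $\boldsymbol{\Phi}$-free envelope $B$ of $\widetilde{f}_{i,k}^{2}$ satisfies $\mathbb{E}(\Phi_{i,k}^{2}B)=\sigma_{\Phi}^{2}\mathbb{E}(B)$, this is precisely what produces $\sigma_{\Phi}^{2}$ (rather than $\alpha_{\Phi}^{2}$) in $C'$.

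I then decompose $\widetilde{f}_{i,k}=\mathcal{U}_{i,k}+\mathcal{E}_{i,k}$ into utility and noise parts and integrate directly. Assumption~\ref{Assumption:add_noise} kills the $\boldsymbol{\eta}$ cross-terms and gives $\mathbb{E}_{\boldsymbol{\eta}}(\mathcal{E}_{i,k}^{2})=\sigma_{\eta}^{2}(1+q_{\mathrm{r}}^{-2}|\mathcal{I}^{(i,k)}|)$, while the reception-probability identities $\mathbb{E}(\kappa_{i,j,k})=q_{\mathrm{r}}$ and $\mathbb{E}(\kappa_{i,j_{1},k}\kappa_{i,j_{2},k})=q_{\mathrm{r}}^{1+\mathbf{1}_{j_{1}\neq j_{2}}}$ collapse the double sum in $\mathcal{U}_{i,k}^{2}$ to
\[
\mathbb{E}_{\mathcal{I},\boldsymbol{\eta}}(\widetilde{f}_{i,k}^{2})=\Bigl(\sum_{j\in\mathcal{N}^{(k)}}u_{j}\Bigr)^{2}+(q_{\mathrm{r}}^{-1}-1)\sum_{j\in\mathcal{N}^{(k)}\setminus\{i\}}u_{j}^{2}+\sigma_{\eta}^{2}\bigl(1+q_{\mathrm{r}}^{-1}(n_{k}-1)\bigr).
\]
Cauchy\textendash Schwarz combined with the pointwise Lipschitz envelope bounds the two utility terms by $n_{k}^{3}L_{\mathbf{S}}^{2}\sigma_{\boldsymbol{a}}^{2}$ and $n_{k}(n_{k}-1)L_{\mathbf{S}}^{2}\sigma_{\boldsymbol{a}}^{2}$ respectively; averaging over $\mathbf{S}$ converts $L_{\mathbf{S}}^{2}$ into $L^{2}$, and pulling out $\sigma_{\Phi}^{2}$ as above yields a bound depending only on $(n_{k},\delta_{i,k})$. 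Finally, conditional on $\delta_{i,k}=1$, $n_{k}-1\sim\mathcal{B}(N-1,q_{\mathrm{a}})$, so the raw binomial moments give $\mathbb{E}(n_{k}^{3}\mid\delta_{i,k}=1)\leq1+7\lambda+6\lambda^{2}+\lambda^{3}$ and $\mathbb{E}(n_{k}(n_{k}-1)\mid\delta_{i,k}=1)\leq2\lambda+\lambda^{2}$, which after combining the $L^{2}\sigma_{\boldsymbol{a}}^{2}$ contributions assemble into the coefficients $(2q_{\mathrm{r}}^{-1}+5)$, $(q_{\mathrm{r}}^{-1}+5)$, $1$ appearing in $C'$.

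The main difficulty is combinatorial bookkeeping rather than any new analytic idea: the double sum in $\mathcal{U}_{i,k}^{2}$ interacts with both the reception randomness (through $\kappa$) and the activity randomness (through $n_{k}\mid\delta_{i,k}=1$), and matching the precise polynomial in $\lambda$ requires tracking raw binomial moments up to order three while keeping the $q_{\mathrm{r}}^{-1}$ contributions separate. A secondary subtlety is that the $\sigma_{\Phi}^{2}$ improvement over the naive $\alpha_{\Phi}^{2}$ bound hinges on having a genuinely $\boldsymbol{\Phi}$-independent envelope for $\widetilde{f}_{i,k}^{2}$, which in turn depends on the projection (\ref{eq:projection-1}) holding almost surely for all realizations of $\boldsymbol{\Phi}$.
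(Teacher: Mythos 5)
Your proposal is correct and follows essentially the same route as the paper's proof: condition on $\delta_{i,k}=1$, expand $\widetilde{f}_{i,k}^{2}$ through the reception indicators $\kappa_{i,j,k}$, eliminate the noise cross-terms via Assumption~\ref{Assumption:add_noise}, bound every utility term by the Lipschitz envelope $L_{\mathbf{S}_{k}}^{2}n_{k}\sigma_{\boldsymbol{a}}^{2}$ (valid because the projection (\ref{eq:projection-1}) keeps the perturbed action inside $\mathcal{A}$), then average over $\mathbf{S}_{k}$ and the binomial moments of $n_{k}-1$; your polynomial in $\lambda$ reproduces $C'$ exactly. The only cosmetic difference is that you keep $\bigl(\sum_{j}u_{j}\bigr)^{2}$ compact and invoke Cauchy--Schwarz, where the paper bounds each product $u_{j_{1},k}u_{j_{2},k}$ in the expanded double sum individually; the two computations agree term by term.

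One justification, however, is stated too strongly, although your own envelope observation repairs it. The opening identity $\mathbb{E}(\widetilde{\beta}_{i,k}^{2}\Phi_{i,k}^{2}\widetilde{f}_{i,k}^{2})=\overline{\beta^{2}}_{k}\,\mathbb{E}(\Phi_{i,k}^{2}\widetilde{f}_{i,k}^{2}\mid\delta_{i,k}=1)$ does \emph{not} follow from independence of $\widetilde{\ell}_{i,k}$: the argument of $\widetilde{f}_{i,k}$ is $\boldsymbol{a}_{k}+\widetilde{\boldsymbol{\gamma}}_{k}\circ\boldsymbol{\Phi}_{k}$, and $\widetilde{\gamma}_{i,k}=\delta_{i,k}\gamma_{\ell_{i,k}}$ is a function of the very same $\widetilde{\ell}_{i,k}$, so $\beta_{\ell_{i,k}}^{2}$ and $\widetilde{f}_{i,k}^{2}$ are correlated and the factorization is false as an identity. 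What is true, and what both you and the paper actually use, is an inequality: first replace the $(\mathcal{I},\boldsymbol{\eta})$-average of $\widetilde{f}_{i,k}^{2}$ by an envelope $B$ that is free of both $\boldsymbol{\Phi}_{k}$ and $\boldsymbol{\ell}_{k}$ (your $L_{\mathbf{S}_{k}}^{2}n_{k}\sigma_{\boldsymbol{a}}^{2}$ bound plus the noise term, which depends only on $(\mathbf{S}_{k},\boldsymbol{\delta}_{k})$), and only then factor, giving $\mathbb{E}(\widetilde{\beta}_{i,k}^{2}\Phi_{i,k}^{2}\widetilde{f}_{i,k}^{2})\leq\overline{\beta^{2}}_{k}\,\sigma_{\Phi}^{2}\,\mathbb{E}(B\mid\delta_{i,k}=1)<\overline{\beta^{2}}_{k}C'$. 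This is exactly the paper's chain (\ref{eq:gs_e})--(\ref{eq:C_def}), where the inner conditional expectation of $\widetilde{f}_{i,k}^{2}$ is bounded by the constant $\sigma_{\Phi}^{-2}C'$ uniformly in $(\boldsymbol{\Phi}_{k},\widetilde{\boldsymbol{\ell}}_{k})$ before $\beta_{1+\widetilde{\ell}_{i,k}}^{2}\Phi_{i,k}^{2}$ is integrated. Your closing remark that the $\sigma_{\Phi}^{2}$ factor hinges on a $\boldsymbol{\Phi}$-independent envelope applies verbatim to $\boldsymbol{\ell}$-independence as well; reordering these two steps closes the gap with no new computation.
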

Substituting (\ref{eq:temp2}) into (\ref{eq:temp1}), we get 
\begin{equation}
\mathbb{P}\!\left(\!\!\sup_{K'\geq K}\!\left\Vert \frac{1}{N}\!\sum_{k=K}^{K'}\!\left(\boldsymbol{a}_{k}\!-\!\boldsymbol{a}^{*}\right)^{T}\!\!\cdot\!\boldsymbol{e}_{k}\right\Vert \!\geq\!\rho\!\right)\!\!\leq\!\frac{d_{\max}^{2}C'}{\rho^{2}}\!\sum_{k=K}^{K'}\!\overline{\beta^{2}}_{k}.\label{eq:e_up_1}
\end{equation}
Since$\lim_{K\rightarrow\infty}\sum_{k=K}^{K'}\overline{\beta^{2}}_{k}=0$
by Lemma~\ref{lem:gamma_sum}, we can say that $N^{-1}\left\Vert \sum_{k=K}^{\infty}\left(\boldsymbol{a}_{k}-\boldsymbol{a}^{*}\right)^{T}\cdot\boldsymbol{e}_{k}\right\Vert $
is bounded a.s. according to (\ref{eq:e_up_1}), Proposition~\ref{lem:stochastic_noise}
is then proved.

\subsection{\label{subsec:Proof-of-gbound}Proof of Lemma~\ref{lem:bound_g}}

We evaluate the expectation of $\widehat{g}_{i,k}^{2}$ on all the
random terms, 
\begin{align}
 & \mathbb{E}_{\mathcal{F}}\left(\widehat{g}_{i,k}^{2}\right)=\mathbb{P}\left(\delta_{i,k}=1\right)\mathbb{E}_{\mathcal{F}}\left(\widehat{g}_{i,k}^{2}\mid\delta_{i,k}=1\right)\nonumber \\
 & =q_{\mathrm{a}}\mathbb{E}_{\boldsymbol{\Phi},\widetilde{\boldsymbol{\ell}}}\left(\beta_{1+\widetilde{\ell}_{i,k}}^{2}\Phi_{i,k}^{2}\mathbb{E}_{\mathbf{S},\mathcal{I},\boldsymbol{\eta},\boldsymbol{\delta}}\left(\widetilde{f}_{i,k}^{2}\mid\delta_{i,k}=1\right)\right),\label{eq:gs_e}
\end{align}
it is worth mentioning that $\widetilde{\boldsymbol{\ell}}_{k}$,
$\boldsymbol{\Phi}_{k}$, $\boldsymbol{\eta}_{k}$, $\mathbf{S}_{k}$,
and $\boldsymbol{\delta}_{k}$ are mutually independent. According
to definition of $\widetilde{f}_{i,k}$ (\ref{eq:f_est-1}), as $\delta_{i,k}=1$,
\begin{align*}
 & \widetilde{f}_{i,k}^{2}\!=\!(\widetilde{u}_{i,k}\!+\!\!\sum_{j\in\mathcal{N}^{\!\left(k\right)}\!\setminus\!\left\{ i\right\} }\!\!\!\frac{\kappa_{i,j,k}}{q_{\mathrm{r}}}\widetilde{u}_{j,k})^{2}\!=\widetilde{u}_{i,k}^{2}\!+\!\!\sum_{j\in\mathcal{N}^{\!\left(k\right)}\!\setminus\!\left\{ i\right\} }\!\!\!\frac{\kappa_{i,j,k}^{2}}{q_{\mathrm{r}}^{2}}\widetilde{u}_{j,k}^{2}\\
 & +\!\!\sum_{j\in\mathcal{N}^{\!\left(k\right)}\!\setminus\!\left\{ i\right\} }\!\!\!\frac{2\kappa_{i,j,k}}{q_{\mathrm{r}}}\widetilde{u}_{i,k}\widetilde{u}_{j,k}\!+\!\!\sum_{\stackrel{j_{1},j_{2}\in\mathcal{N}^{\!\left(k\right)}:}{j_{1}\neq j_{2}\neq i}}\!\!\!\frac{\kappa_{i,j_{1},k}\kappa_{i,j_{2},k}}{q_{\mathrm{r}}}\widetilde{u}_{j_{1},k}\widetilde{u}_{j_{2},k}.
\end{align*}
Recall that $\mathbb{E}(\kappa_{i,j,k})=\mathbb{E}(\kappa_{i,j,k}^{2})=q_{\mathrm{r}}$,
$\forall j\neq i$ and $\mathbb{E}(\kappa_{i,j_{1},k}\kappa_{i,j_{2},k})=q_{\mathrm{r}}^{2}$,
$\forall j_{1}\neq j_{2}\neq i$, as $\kappa_{i,j_{1},k}$ and $\kappa_{i,j_{2},k}$
are independent. Similarly, we have $\mathbb{E}_{\boldsymbol{\eta}}(\widetilde{u}_{i,k}^{2})=\mathbb{E}_{\boldsymbol{\eta}}((\widetilde{u}_{i,k}+\eta_{i,k})^{2})=u_{i,k}^{2}+\sigma_{\eta}^{2}$,
$\forall i$, and $\mathbb{E}_{\boldsymbol{\eta}}(\widetilde{u}_{i,k}\widetilde{u}_{j,k})=u_{i,k}u_{j,k}$,
$\forall i\neq j$. We can take expectation of $\widetilde{f}_{i,k}^{2}$
on $\mathcal{I}^{(i,k)}$ and $\boldsymbol{\eta}_{k}$ to get 
\begin{align}
 & \mathbb{E}_{\mathcal{I},\boldsymbol{\eta}}\left(\widetilde{f}_{i,k}^{2}\mid\delta_{i,k}=1\right)=\left(\!1+\frac{m}{q_{\mathrm{r}}}\!\right)\!\sigma_{\eta}^{2}+u_{i,k}^{2}\nonumber \\
{\color{blue}} & +\!\!\sum_{j\in\mathcal{N}^{\!\left(k\right)}\!\setminus\!\left\{ i\right\} }\!\!\left(\frac{1}{q_{\mathrm{r}}}u_{j,k}^{2}\!+2u_{i,k}u_{j,k}\!\right)\!+\!\!\sum_{\stackrel{j_{1},j_{2}\in\mathcal{N}^{\!\left(k\right)}:}{j_{1}\neq j_{2}\neq i}}\!\!u_{j_{1},k}u_{j_{2},k}\label{eq:gs_e1}
\end{align}
where we denote $m=\sum_{j\in\mathcal{N}^{\left(k\right)}\setminus\left\{ i\right\} }1=n_{k}-1$.

We then need to find an upper bound of $u_{i,k}^{2}$ and $u_{j_{1},k}u_{j_{2},k}$.
For any $\boldsymbol{\delta}_{k}$, $\mathbf{S}_{k}$ and $j\in\mathcal{N}^{\left(k\right)}$,
we have, 
\begin{align}
u_{j,k}^{2} & =u_{j}^{2}\left(\widehat{\boldsymbol{a}}_{k},\boldsymbol{\delta}_{k},\mathbf{S}_{k}\right)\overset{\left(a\right)}{\leq}\!\left(\left\Vert u_{j}\left(\mathbf{0},\boldsymbol{\delta}_{k},\mathbf{S}_{k}\right)\right\Vert \!+\!L_{\mathbf{S}_{k}}\!\left\Vert \widehat{\boldsymbol{a}}_{k}\circ\boldsymbol{\delta}_{k}\right\Vert \right)^{2}\nonumber \\
 & \overset{\left(b\right)}{\leq}L_{\mathbf{S}_{k}}^{2}\left\Vert \widehat{\boldsymbol{a}}_{k}\circ\boldsymbol{\delta}_{k}\right\Vert ^{2}\overset{\left(c\right)}{\leq}L_{\mathbf{S}_{k}}^{2}\left(m+1\right)\sigma_{\boldsymbol{a}}^{2}<\infty,\label{eq:u_bound}
\end{align}
where $\left(a\right)$ is by (\ref{eq:lipsc}),\emph{ i.e.}, the
assumption that $u_{i}$ is Lipschitz; $\left(b\right)$ comes from
$u_{j}\left(\mathbf{0},\boldsymbol{\delta}_{k},\mathbf{S}_{k}\right)=u_{j}\left(\mathbf{0},\mathbf{0},\mathbf{S}_{k}\right)=0$,
as $\widehat{\boldsymbol{a}}_{k}=\mathbf{0}$ also means no nodes
perform action; $\left(c\right)$ is by $\left\Vert \widehat{\boldsymbol{a}}_{k}\circ\boldsymbol{\delta}_{k}\right\Vert ^{2}\leq\sum_{j\in\mathcal{N}}\delta_{j,k}\sigma_{\boldsymbol{a}}^{2}=\left(m+1\right)\sigma_{\boldsymbol{a}}^{2}$,
where $\sigma_{\boldsymbol{a}}^{2}$ is the upper bound of $\widehat{a}_{i,k}^{2}$
defined in (\ref{eq:amax-1}). Based on (\ref{eq:u_bound}), we can
also deduce 
\begin{equation}
u_{j_{1},k}u_{j_{2},k}\leq\left|u_{j_{1},k}\right|\left|u_{j_{2},k}\right|\leq L_{\mathbf{S}_{k}}^{2}\left(m+1\right)\sigma_{\boldsymbol{a}}^{2}\label{eq:u_bound1}
\end{equation}
for any $j_{1},j_{2}\in\mathcal{N}^{\left(k\right)}$ such that $j_{1}\neq j_{2}$.

By substituting (\ref{eq:u_bound}) and (\ref{eq:u_bound1}) into
(\ref{eq:gs_e1}), we get 
\begin{align}
 & \mathbb{E}_{\mathcal{I},\boldsymbol{\eta}}\!\left(\widetilde{f}_{i,k}^{2}\mid\delta_{i,k}=1\right)\leq\left(q_{\mathrm{r}}^{-1}m+1\right)\sigma_{\eta}^{2}\nonumber \\
 & \quad+\left(1+\left(q_{\mathrm{r}}^{-1}+2\right)\left(m+m^{2}\right)+m^{3}\right)L_{\mathbf{S}_{k}}^{2}\sigma_{\boldsymbol{a}}^{2}.\label{eq:gs_e2}
\end{align}
Meanwhile, we have $L^{2}=\mathbb{E}_{\mathbf{S}}\left(L_{\mathbf{S}_{k}}^{2}\right)<\infty$
by Assumption~\ref{Assumption:lipc}. In both cases where the random
variable $m=\sum_{j\in\mathcal{N}\setminus\left\{ i\right\} }\delta_{j,k}$
follows a binomial distribution or Poisson distribution, it is easy
to show that $\mathbb{E}(m)=(N\!-1)q_{\mathrm{a}}\leq\lambda$, $\mathbb{E}(m^{2})\leq\lambda^{2}+\lambda$,
and $\mathbb{E}(m^{3})\leq\lambda^{3}+3\lambda^{2}+\lambda$. Thus
we can further take the expectation of both sides of (\ref{eq:gs_e2})
on $\mathbf{S}_{k}$ and $\boldsymbol{\delta}_{k}$ to get 
\begin{align}
 & \mathbb{E}_{\mathbf{S},\mathcal{I},\boldsymbol{\eta},\boldsymbol{\delta}}(\widetilde{f}_{i,k}^{2}\mid\delta_{i,k}=1)\!\leq\!\left(1+q_{\mathrm{r}}^{-1}\lambda\right)\sigma_{\eta}^{2}+\left(1+\lambda^{3}\right)L^{2}\sigma_{\boldsymbol{a}}^{2}\nonumber \\
 & \qquad+\left((5+2q_{\mathrm{r}}^{-1})\lambda+(5+q_{\mathrm{r}}^{-1})\lambda^{2}\right)L^{2}\sigma_{\boldsymbol{a}}^{2}=\sigma_{\Phi}^{-2}C',\label{eq:gs_e3}
\end{align}
with $C'$ defined in Lemma~\ref{lem:bound_g}.

Finally, by substituting (\ref{eq:gs_e3}) into (\ref{eq:gs_e}),
we get 
\begin{align}
\mathbb{E}_{\mathcal{F}}(\widehat{g}_{i,k}^{2}) & \leq q_{\mathrm{a}}\mathbb{E}_{\boldsymbol{\Phi},\widetilde{\boldsymbol{\ell}}}(\beta_{1+\widetilde{\ell}_{i,k}}^{2}\Phi_{i,k}^{2}\sigma_{\Phi}^{-2}C')\nonumber \\
 & =C'q_{\mathrm{a}}\mathbb{E}_{\widetilde{\boldsymbol{\ell}}}(\beta_{1+\widetilde{\ell}_{i,k}}^{2})\sigma_{\Phi}^{-2}\mathbb{E}_{\boldsymbol{\Phi}}(\Phi_{i,k}^{2})=C'\overline{\beta^{2}}_{k},\label{eq:C_def}
\end{align}
note that $\mathbb{E}_{\boldsymbol{\Phi}}(\Phi_{i,k}^{2})=\sigma_{\Phi}^{2}$
and $\overline{\beta^{2}}_{k}=\mathbb{E}(\delta_{i,k}\beta_{\ell_{i,k}}^{2})=\mathbb{P}(\delta_{i,k}=1)\mathbb{E}_{\widetilde{\boldsymbol{\ell}}}(\beta_{1+\widetilde{\ell}_{i,k}}^{2})$.
In the end, Lemma~\ref{lem:bound_g} can be proved since $\mathbb{E}_{\mathcal{F}}(\left\Vert \widehat{\boldsymbol{g}}_{k}\right\Vert ^{2})=\sum_{i=1}^{N}\mathbb{E}_{\mathcal{F}}(\widehat{g}_{i,k}^{2})\leq NC'\overline{\beta^{2}}_{k}.$

\subsection{\label{subsec:Proof-proj}Proof of Proposition~\ref{lem:bias_projection}}

By definition, $\delta_{i,k}\iota_{i,k}$ takes binary value, we can
evaluate 
\begin{align}
 & \mathbb{E}(\delta_{i,k}\iota_{i,k})=\mathbb{P}(\delta_{i,k}=1,\widetilde{\ell}_{i,k}<K_{0}-1)=\mathbb{P}(\delta_{i,k}=1)\nonumber \\
 & \times\mathbb{P}(\widetilde{\ell}_{i,k}<K_{0}-1)\overset{\left(a\right)}{\leq}q_{\mathrm{a}}\exp\!\left(\!-\frac{\left(q_{\mathrm{a}}\left(k-1\right)-\left(K_{0}-1\right)\right)^{2}}{2q_{\mathrm{a}}\left(k-1\right)}\!\right)\nonumber \\
 & \leq q_{\mathrm{a}}\exp\left(-\frac{q_{\mathrm{a}}}{2}\left(k-1\right)+K_{0}-1\right),\label{eq:proj_b1}
\end{align}
where $\left(a\right)$ is by Chernoff's bound, note that $\widetilde{\ell}_{i,k}\sim\mathcal{B}(k-1,q_{\mathrm{a}})$.
From (\ref{eq:proj_b1}) and the definition of $\varDelta_{k}$ in
Proposition~\ref{lem:gamma_sum-1}, we get 
\begin{align}
\mathbb{E}\left(\varDelta_{k}\right) & \leq\alpha_{\Phi}^{2}\gamma_{0}^{2}q_{\mathrm{a}}\exp\left(-\frac{q_{\mathrm{a}}}{2}\left(k-1\right)+K_{0}-1\right).
\end{align}
Meanwhile, we obtain $\overline{\beta^{2}}_{k-1}\geq q_{\mathrm{a}}\left(q_{\mathrm{a}}\left(k-2\right)+1\right)^{-2c_{2}}$
using similar steps as (\ref{eq:low_step}). We have 
\[
\lim_{k\rightarrow\infty}\frac{\exp\left(-\frac{q_{\mathrm{a}}}{2}\left(k-1\right)+K_{0}-1\right)}{\left(q_{\mathrm{a}}\left(k-2\right)+1\right)^{-2c_{2}}}=0,
\]
meaning that the upper bound of $\mathbb{E}(\varDelta_{k})$ decreases
much faster than the lower bound of $\overline{\beta^{2}}_{k-1}$.
Therefore, there must exist some bounded constants $K_{1}\geq K_{0}$
and $\widetilde{C}>0$, such that $\mathbb{E}(\varDelta_{k})\leq\widetilde{C}\overline{\beta^{2}}_{k-1}$,
$\forall K\geq K_{1}$.

Denote $e_{k}'=\varDelta_{k}-\mathbb{E}(\varDelta_{k})$, then $\{\sum_{k=K_{1}}^{K'}e_{k}'\}_{K'\geq K_{1}}$
is martingale because of $\mathbb{E}(e_{k}')=0$ and the independence
of $\varDelta_{k}$ and $\varDelta_{k'}$ for any $k\neq k'$. Obviously,
$0\leq\varDelta_{k}\leq\alpha_{\Phi}^{2}\gamma_{0}^{2}$, thus $\left|e_{k}'\right|\leq\left|\varDelta_{k}\right|<\infty$.
We can use Doob's martingale inequality to prove $\left|\sum_{k=K_{1}}^{\infty}e_{k}'\right|<\infty$
a.s., with similar steps as the proof of Proposition~\ref{lem:stochastic_noise}.
In the end, we have 
\begin{align}
\sum_{k=K_{1}}^{\infty}\varDelta_{k}\!=\!\sum_{k=K_{1}}^{\infty}\!\mathbb{E}\left(\varDelta_{k}\right)\!+\!\sum_{k=K_{1}}^{\infty}e_{k}'\! & \leq\!\sum_{k=K_{1}}^{\infty}\!\overline{\beta^{2}}_{k-1}\!+\left|\sum_{k=K_{1}}^{\infty}\!e_{k}'\right|\nonumber \\
 & <\infty\qquad\qquad\mathrm{a.s.}
\end{align}
in which $\sum_{k=1}^{\infty}\overline{\beta^{2}}_{k}<\infty$ by
Proposition\ \ref{lem:gamma_sum}. As $\varDelta_{k}\geq0$ by definition,
we also have $\left|\sum_{k=K_{1}}^{\infty}\varDelta_{k}\right|<\infty$
a.s., which concludes the proof.

\subsection{\label{subsec:Proof-sketch-of_th}Proof sketch of Theorem~\ref{prop:converge_rp}}

We perform the summation of (\ref{eq:evo_d}) from $k=K_{0}$ to $k=K$:
\begin{align}
 & d_{K+1}=d_{K_{0}}\!+\frac{2\sigma_{\Phi}^{2}}{\lambda}\!\sum_{k=K_{0}}^{K}\!\overline{\beta\gamma}_{k}\left(\boldsymbol{a}_{k}-\boldsymbol{a}^{*}\right)^{T}\!\cdot\!\left(\nabla F\!\left(\boldsymbol{a}_{k}\right)\!+\boldsymbol{b}_{k}\right)\nonumber \\
 & \!+\!\frac{1}{N}\!\sum_{k=K_{0}}^{K}\!\!\left\Vert \widehat{\boldsymbol{g}}_{k}\right\Vert ^{2}\!\!+\!\frac{2}{N}\!\sum_{k=K_{0}}^{K}\!\!\left(\boldsymbol{a}_{k}-\boldsymbol{a}^{*}\right)^{T}\!\!\cdot\!\boldsymbol{e}_{k}\!+\!\sum_{k=K_{0}}^{K}\!\!\varDelta_{k+1}.\label{eq:evo_s}
\end{align}
According to Lemma~\ref{lem:bound_g}, 
\begin{align}
 & \frac{1}{N}\sum_{k=1}^{\infty}\mathbb{E}\left(\left\Vert \widehat{\boldsymbol{g}}_{k}\right\Vert ^{2}\right)\leq C\sum_{k=1}^{\infty}\overline{\beta^{2}}_{k}<\infty,\label{eq:bound_b1}
\end{align}
as $\sum_{k=1}^{\infty}\overline{\beta^{2}}_{k}<\infty$ by Proposition~\ref{lem:gamma_sum}.
We can deduce that 
\begin{equation}
\frac{1}{N}\sum_{k=K_{0}}^{\infty}\left\Vert \widehat{\boldsymbol{g}}_{k}\right\Vert ^{2}<\infty,\quad\mathrm{a.s.}\label{eq:bound1}
\end{equation}
otherwise (\ref{eq:bound_b1}) cannot hold. Besides we also have $\frac{2}{N}\left|\sum_{k=K_{0}}^{K}\left(\boldsymbol{a}_{k}-\boldsymbol{a}^{*}\right)^{T}\cdot\boldsymbol{e}_{k}\right|<\infty$
a.s. and $\left|\sum_{k=K_{0}}^{K}\varDelta_{k+1}\right|<\infty$
a.s. by Propositions~\ref{lem:stochastic_noise} and~\ref{lem:bias_projection}.

From Theorem~\ref{prop:bias}, we know that $\left|b_{i,k}\right|\rightarrow0$,
$\forall i\in\mathcal{N}$. In other words, for an arbitrary small
positive value $\varepsilon$, there exists $K'$ such that $\left\Vert \nabla F\left(\boldsymbol{a}_{k}\right)+\boldsymbol{b}_{k}\right\Vert \geq\left(1-\varepsilon\right)\left\Vert \nabla F\left(\boldsymbol{a}_{k}\right)\right\Vert $.
By the concavity of $F$, we have $\left(\boldsymbol{a}_{k}-\boldsymbol{a}^{*}\right)^{T}\cdot\nabla F\left(\boldsymbol{a}_{k}\right)\leq0$,
thus 
\begin{align}
 & \frac{2\sigma_{\Phi}^{2}}{\lambda}\sum_{k=1}^{K}\overline{\beta\gamma}_{k}\left(\boldsymbol{a}_{k}-\boldsymbol{a}^{*}\right)^{T}\cdot\left(\nabla F\left(\boldsymbol{a}_{k}\right)+\boldsymbol{b}_{k}\right)\nonumber \\
 & \leq\frac{2\sigma_{\Phi}^{2}}{\lambda}\left(1-\varepsilon\right)\sum_{k=0}^{K}\overline{\beta\gamma}_{k}\left(\boldsymbol{a}_{k}-\boldsymbol{a}^{*}\right)^{T}\cdot\nabla F\left(\boldsymbol{a}_{k}\right).\label{eq:c3}
\end{align}
The following steps of the proof is the same to the classical proof
in \cite{robbins1951stochastic}. The basic idea is that, if $\boldsymbol{a}_{k}$
does not converge to $\boldsymbol{a}^{*}$, then due to $\sum_{k=0}^{\infty}\overline{\beta\gamma}_{k}\rightarrow\infty$,
we have 
\begin{equation}
\sum_{k=0}^{\infty}\overline{\beta\gamma}_{k}\left(\boldsymbol{a}_{k}-\boldsymbol{a}^{*}\right)^{T}\cdot\nabla F\left(\boldsymbol{a}_{k}\right)<-\infty,\label{eq:hy}
\end{equation}
which leads to $\lim_{K\rightarrow\infty}d_{K+1}<-\infty$ by the
above equations (\ref{eq:bound_b1}), (\ref{eq:c3}), and Proposition~\ref{lem:stochastic_noise}.
However $d_{K+1}$ should be positive by definition. Therefore, there
should be $\lim_{k\rightarrow\infty}\nabla F\left(\boldsymbol{a}_{k}\right)=\mathbf{0}$
and $\lim_{k\rightarrow\infty}\boldsymbol{a}_{k}=\boldsymbol{a}^{*}$
a.s., which concludes the proof.

\subsection{\label{subsec:Proof-of-Lemma_D}Proof of Lemma \ref{lem:inequality_D}}

The relation between $d_{k+1}$ and $d_{k}$ has been presented in~(\ref{eq:evo_d}).
In this proof, we aim to deduce an upper bound of $D_{k+1}=\mathbb{E}\left(d_{k+1}\right)$,
which should be a function of $D_{k}=\mathbb{E}\left(d_{k}\right)$.
By performing the expectation on all the random terms of (\ref{eq:evo_d}),
we have 
\begin{align}
 & D_{k+1}\leq D_{k}\!+\mathbb{E}\!\left(\frac{1}{N}\left\Vert \widehat{\boldsymbol{g}}_{k}\right\Vert ^{2}+\frac{2}{N}\left(\boldsymbol{a}_{k}-\boldsymbol{a}^{*}\right)^{T}\!\cdot\!\boldsymbol{e}_{k}+\varDelta_{k+1}\right)\nonumber \\
 & \qquad+\frac{2\sigma_{\Phi}^{2}}{\lambda}\overline{\beta\gamma}_{k}\mathbb{E}\left(\left(\boldsymbol{a}_{k}-\boldsymbol{a}^{*}\right)^{T}\!\cdot\!\left(\nabla F\left(\boldsymbol{a}_{k}\right)+\boldsymbol{b}_{k}\right)\right).\label{eq:Dk+1_Dk}
\end{align}
Since $\mathbb{E}\left(\boldsymbol{e}_{k}\right)=\mathbf{0}$, $\mathbb{E}\left(\varDelta_{k+1}\right)\leq\widetilde{C}\overline{\beta^{2}}_{k}$
and the upper bound of $\mathbb{E}(\left\Vert \widehat{\boldsymbol{g}}_{k}\right\Vert ^{2})$
has been given by Lemma~\ref{lem:bound_g}, we get 
\begin{equation}
\mathbb{E}\left(\!\frac{1}{N}\left\Vert \widehat{\boldsymbol{g}}_{k}\right\Vert ^{2}\!+\!\frac{2}{N}\left(\boldsymbol{a}_{k}-\boldsymbol{a}^{*}\right)^{T}\!\cdot\!\boldsymbol{e}_{k}+\varDelta_{k+1}\!\right)\!\leq C\overline{\beta^{2}}_{k},\label{eq:e_bound}
\end{equation}
with $C=C'+\widetilde{C}$. We then need to bound the last term $\mathbb{E}((\boldsymbol{a}_{k}-\boldsymbol{a}^{*})^{T}\!\cdot\!(\nabla F(\boldsymbol{a}_{k})+\boldsymbol{b}_{k}))$.
With the bound of $\left|b_{i,k}\right|$, we have 
\begin{align}
 & \left(\boldsymbol{a}_{k}-\boldsymbol{a}^{*}\right)^{T}\!\cdot\!\boldsymbol{b}_{k}\nonumber \\
 & \leq\sum_{i=1}^{N}\left|a_{i,k}-a_{i}^{*}\right|\left|b_{i,k}\right|\leq\frac{\alpha_{\Phi}^{3}\alpha_{G}}{2\sigma_{\Phi}^{2}}\sum_{i=1}^{N}\left|a_{i,k}-a_{i}^{*}\right|w_{i,k}\nonumber \\
 & <\!\frac{\alpha_{\Phi}^{3}\alpha_{G}q_{\mathrm{a}}\psi_{k}}{2\sigma_{\Phi}^{2}\overline{\beta\gamma}_{k}}\!\sqrt{N\!\sum_{i=1}^{N}(a_{i,k}-a_{i}^{*})^{2}}\!=\!\frac{\alpha_{\Phi}^{3}\alpha_{G}\lambda\psi_{k}}{2\sigma_{\Phi}^{2}\overline{\beta\gamma}_{k}}\!\sqrt{d_{k}}.\label{eq:ab_bound}
\end{align}
Note that we use (\ref{eq:w_bound1}) to bound $w_{i,k}$, \emph{i.e.},
$w_{i,k}\leq q_{\mathrm{a}}\overline{\beta\gamma}_{k}^{-1}\psi_{k}$
with $\psi_{k}$ defined in (\ref{eq:PHI_def}). Since $\mathbb{E}\left(\sqrt{d_{k}}\right)\leq\sqrt{\mathbb{E}\left(d_{k}\right)}=\sqrt{D_{k}}$,
by taking expectation on both sides of (\ref{eq:ab_bound}), we get
\begin{equation}
\mathbb{E}\left(\left(\boldsymbol{a}_{k}-\boldsymbol{a}^{*}\right)^{T}\!\cdot\!\boldsymbol{b}_{k}\right)<\frac{\alpha_{\Phi}^{3}\alpha_{G}\lambda\psi_{k}}{2\sigma_{\Phi}^{2}\overline{\beta\gamma}_{k}}\sqrt{D_{k}}.\label{eq:order1_1}
\end{equation}
Meanwhile, according to Assumption~\ref{Assumption:strong_concave},
we have 
\begin{equation}
\mathbb{E}\left(\left(\boldsymbol{a}_{k}-\boldsymbol{a}^{*}\right)^{T}\!\cdot\!\nabla F\left(\boldsymbol{a}_{k}\right)\right)\leq-\alpha_{F}ND_{k}\label{eq:order1_2-1}
\end{equation}
Substituting (\ref{eq:e_bound}), (\ref{eq:order1_1}) and (\ref{eq:order1_2-1})
into (\ref{eq:Dk+1_Dk}), we get 
\begin{align*}
 & D_{k+1}\!\leq\!\left(1\!-\!2\sigma_{\Phi}^{2}\alpha_{F}q_{\mathrm{a}}^{-1}\overline{\beta\gamma}_{k}\right)\!D_{k}\!+\alpha_{G}\alpha_{\Phi}^{3}\psi_{k}\sqrt{D_{k}}+C\overline{\beta^{2}}_{k},
\end{align*}
which concludes the proof.

\subsection{\label{subsec:proof_convergencerate}Proof of Theorem~\ref{prop:rate}}

We present the proof of (\ref{eq:Dk_up1}) and of (\ref{eq:Dk_up2})
in Appendix~\ref{subsec:Proof_rate1} and in Appendix~\ref{subsec:Proof_rate2}
respectively.

\subsubsection{\label{subsec:Proof_rate1}Proof of (\ref{eq:Dk_up1})}

The proof realized by induction. First of all, we can easily get $D_{K_{0}}\leq\vartheta^{2}\psi_{K_{0}}^{2}\theta_{K_{0}}^{-2}$
by definition of $\vartheta$. The main problem is to verify whether
$D_{k+1}\leq\vartheta^{2}\psi_{k+1}^{2}\theta_{k+1}^{-2}$ can be
obtained from $D_{k}\leq\vartheta^{2}\psi_{k}^{2}\theta_{k}^{-2}$,
$\forall k\geq K_{0}$.

Suppose that $D_{k}\leq\vartheta^{2}\psi_{k}^{2}\theta_{k}^{-2}$
is true, then by (\ref{eq:dk+1_dk}) we have 
\begin{align}
D_{k+1} & \leq\left(1-A\theta_{k}\right)\frac{\psi_{k}^{2}}{\theta_{k}^{2}}\vartheta^{2}+B\frac{\psi_{k}^{2}}{\theta_{k}}\vartheta+C\upsilon_{k},
\end{align}
as $1-A\theta_{k}\geq0$, $\forall k\geq K_{0}$. The problem turns
to prove the existence of a constant $\vartheta\in\mathbb{R}^{+}$
that ensures 
\begin{align}
D_{k+1}\leq\!\left(1\!-A\theta_{k}\right)\!\frac{\psi_{k}^{2}}{\theta_{k}^{2}}\vartheta^{2}\!+\!B\frac{\psi_{k}^{2}}{\theta_{k}}\vartheta\!+\!C\upsilon_{k} & \leq\vartheta^{2}\frac{\psi_{k+1}^{2}}{\theta_{k+1}^{2}}.\label{eq:rate_proof_1}
\end{align}
which can be rewrite as

\begin{equation}
\left(A-\chi_{k}\right)\vartheta^{2}-B\vartheta-C\upsilon_{k}\theta_{k}\psi_{k}^{-2}\geq0,\label{eq:para_1}
\end{equation}
in which $\chi_{k}=\left(1-\frac{\psi_{k+1}^{2}\theta_{k+1}^{-2}}{\psi_{k}^{2}\theta_{k}^{-2}}\right)\theta_{k}^{-1}$
as defined in (\ref{eq:chi_w}). By solving (\ref{eq:para_1}), we
obtain 
\[
\vartheta\geq\overline{\vartheta}_{k}=\frac{B}{2\left(A-\chi_{k}\right)}+\sqrt{\left(\frac{B}{2\left(A-\chi_{k}\right)}\right)^{2}+C\frac{\upsilon_{k}\theta_{k}\psi_{k}^{-2}}{A-\chi_{k}}},
\]
as $A-\chi_{k}>0$ by assumption and $\vartheta>0$. The last step
is to find an upper bound of $\overline{\vartheta}_{k}$ that is independent
of $k$. By definition (\ref{eq:chi_w}), we have $\epsilon_{1}\geq\chi_{k}$
and $\epsilon_{2}\geq\upsilon_{k}\theta_{k}\psi_{k}^{-2},$ $\forall k\geq K_{0}$.
According to the monotonicity of $\overline{\vartheta}_{k}$ w.r.t.
$\chi_{k}$ and $\upsilon_{k}\theta_{k}\psi_{k}^{-2}$, we have the
upper bound of $\overline{\vartheta}_{k}$, $\forall k\geq K_{0}$,
\begin{align}
\overline{\vartheta}_{k} & \leq\frac{B}{2\left(A-\epsilon_{1}\right)}+\sqrt{\frac{B^{2}}{4\left(A-\epsilon_{1}\right)^{2}}+\frac{C\epsilon_{2}}{A-\epsilon_{1}}}.
\end{align}
We can conclude that, $D_{k}\leq\vartheta^{2}\gamma_{k}^{2}$ leads
to $D_{k+1}\leq\vartheta^{2}\gamma_{k+1}^{2}$ if $\vartheta$ satisfies
(\ref{eq:para_0}), \emph{i.e.}, 
\begin{equation}
\vartheta\geq\sup_{k\geq K_{0}}\overline{\vartheta}_{k}=\frac{B+\sqrt{B^{2}+4C\epsilon_{2}\left(A-\epsilon_{1}\right)}}{2\left(A-\epsilon_{1}\right)}.\label{eq:rate_proof_2-1}
\end{equation}

\subsubsection{\label{subsec:Proof_rate2}Proof of of (\ref{eq:Dk_up2}) }

Similar steps can be used to prove (\ref{eq:Dk_up2}). First, we have
$D_{K_{0}}\leq\varrho^{2}\upsilon_{K_{0}}\theta_{K_{0}}^{-1}$ by
definition of $\varrho$. Then for any $k\geq K_{0}$, we should show
that $D_{k}\leq\varrho^{2}\upsilon_{k}\theta_{k}^{-1}$ leads to $D_{k+1}\leq\varrho^{2}\upsilon_{k+1}\theta_{k+1}^{-1}$.

Suppose that $D_{k}\leq\varrho^{2}\upsilon_{k}\theta_{k}^{-1}$ is
true, from (\ref{eq:dk+1_dk}), we have 
\begin{align}
D_{k+1} & \leq\left(1-A\theta_{k}\right)\frac{\upsilon_{k}}{\theta_{k}}\varrho^{2}+B\psi_{k}\sqrt{\frac{\upsilon_{k}}{\theta_{k}}}\varrho+C\upsilon_{k}.
\end{align}
To show $D_{k+1}\leq\varrho^{2}\upsilon_{k+1}\theta_{k+1}^{-1}$,
the following has to be true: 
\begin{equation}
\left(1-A\theta_{k}\right)\frac{\upsilon_{k}}{\theta_{k}}\varrho^{2}+B\psi_{k}\sqrt{\frac{\upsilon_{k}}{\theta_{k}}}\varrho+C\upsilon_{k}\leq\varrho^{2}\frac{\upsilon_{k+1}}{\theta_{k+1}}.\label{eq:rate_proof3}
\end{equation}
We rewrite (\ref{eq:rate_proof3}) as

\begin{equation}
\left(A-\varpi_{k}\right)\varrho^{2}-B\frac{\psi_{k}}{\sqrt{\theta_{k}\upsilon_{k}}}\varrho-C\geq0,\label{eq:rate_proof4}
\end{equation}
where by definition (\ref{eq:small_p}), $\varpi_{k}=\left(1-\frac{\upsilon_{k+1}\theta_{k+1}^{-1}}{\upsilon_{k}\theta_{k}^{-1}}\right)\theta_{k}^{-1}.$
As $A-\varpi_{k}\geq0$ and $\varrho>0$, (\ref{eq:rate_proof4})
can be solved, \emph{i.e.}, 
\[
\varrho\geq\overline{\varrho}_{k}=\frac{B\frac{\psi_{k}}{\sqrt{\theta_{k}\upsilon_{k}}}+\sqrt{\left(B\frac{\psi_{k}}{\sqrt{\theta_{k}\upsilon_{k}}}\right)^{2}+4C\left(A-\varpi_{k}\right)}}{2\left(A-\varpi_{k}\right)}.
\]
Consider $\epsilon_{3}$ and $\epsilon_{4}$ given in (\ref{eq:small_p}),
\emph{i.e.}, $\epsilon_{3}\geq\varpi_{k}$ and $\epsilon_{4}\geq\frac{\psi_{k}^{2}}{\theta_{k}\upsilon_{k}}$,
$\forall k\geq K_{0}$. We can derive the upper bound of $\overline{\varrho}_{k}$,
\begin{align}
\overline{\varrho}_{k} & \leq\frac{B\sqrt{\epsilon_{4}}+\sqrt{B^{2}\epsilon_{4}+4C\left(A-\epsilon_{3}\right)}}{2\left(A-\epsilon_{3}\right)},\quad\forall k\geq K_{0}.
\end{align}
Therefore, if $\varrho$ satisfies (\ref{eq:para_0-1}), \emph{i.e.},
\begin{equation}
\varrho\geq\sup_{k\geq K_{0}}\overline{\varrho}_{k}=\frac{B\sqrt{\epsilon_{4}}+\sqrt{B^{2}\epsilon_{4}+4C\left(A-\epsilon_{3}\right)}}{2\left(A-\epsilon_{3}\right)},\label{eq:rate_proof_2-1-1}
\end{equation}
then (\ref{eq:rate_proof3}) is true and $D_{k+1}\leq\varrho^{2}\upsilon_{k+1}\theta_{k+1}^{-1}$
holds, which concludes the proof.

\subsection{\label{subsec:Proof-of-Lemma_order}Proof of Lemma~\ref{lem:bounds_collect}}

Recall that $p_{k,\xi}=e^{-\frac{1}{2}\xi^{2}q_{\mathrm{a}}\left(k-1\right)}$,
$\overline{k}'=q\left(k-1\right)+1$, and $\overline{k}_{\xi}=\left\lfloor \left(1-\xi\right)q_{\mathrm{a}}\left(k-1\right)\right\rfloor +2$.
We use Lemma~\ref{lem:bound_chern} to get
\begin{align}
 & \theta_{k}<p_{k,\xi}\beta_{1}\gamma_{1}+\beta_{\overline{k}_{\xi}}\gamma_{\overline{k}_{\xi}},\upsilon_{k}<q_{\mathrm{a}}\left(p_{k,\xi}\beta_{1}^{2}+\beta_{\overline{k}_{\xi}}^{2}\right),\nonumber \\
 & \psi_{k}<\!\left(3\lambda^{2}\!+\!6\lambda q_{\mathrm{a}}\!+\!q_{\mathrm{a}}\right)\!\beta_{1}\gamma_{1}^{2}p_{k,\xi}\!+\!\left(\lambda^{2}\!+\!2\lambda q_{\mathrm{a}}\!+\!q_{\mathrm{a}}\right)\!\beta_{\overline{k}_{\xi}}\gamma_{\overline{k}_{\xi}}^{2}\label{eq:bounds_s}
\end{align}
The exponential term $p_{k,\xi}$ decreases much faster than $\beta_{\overline{k}_{\xi}}\gamma_{\overline{k}_{\xi}}$,
$\beta_{\overline{k}_{\xi}}\gamma_{\overline{k}_{\xi}}^{2}$ and $\beta_{\overline{k}_{\xi}}^{2}$.
Thus, for any $\xi'>0$, there exists $K'$ such that $\forall k\geq K'$,
one has $\theta_{k}<\left(1+\xi'\right)\beta_{\overline{k}_{\xi}}\gamma_{\overline{k}_{\xi}}$,
$\upsilon_{k}<\left(1+\xi'\right)q_{\mathrm{a}}\beta_{\overline{k}_{\xi}}^{2}$,
and $\psi_{k}<\left(1+\xi'\right)\left(\lambda+1\right)^{2}\beta_{\overline{k}_{\xi}}\gamma_{\overline{k}_{\xi}}^{2}$
from (\ref{eq:bounds_s}). Meanwhile, similar to (\ref{eq:low_step}),
by Jensen's inequality, we have 
\begin{equation}
\theta_{k}\geq\beta_{\overline{k}'}\gamma_{\overline{k}'};\:\:\:\upsilon_{k}\geq q_{\mathrm{a}}\beta_{\overline{k}'}^{2};\:\:\:\psi_{k}>\lambda^{2}\beta_{\overline{k}'}\gamma_{\overline{k}'}^{2}.\label{eq:lower_jenson}
\end{equation}
Hence, $\upsilon_{k}\theta_{k}^{-1}$ can be bounded as follows: 
\begin{align}
\frac{\upsilon_{k}}{\theta_{k}} & \overset{\left(a\right)}{<}\left(1+\xi'\right)q_{\mathrm{a}}\frac{\beta_{0}}{\gamma_{0}}\left(\frac{\left(1-\xi\right)q_{\mathrm{a}}\left(k-1\right)+1}{q_{\mathrm{a}}\left(k-1\right)+1}\right)^{-c_{1}-c_{2}}\nonumber \\
 & \qquad\times\left(\frac{\left(1-\xi\right)q_{\mathrm{a}}\left(k-1\right)+1}{k-1+1}\right)^{-c_{1}+c_{2}}k^{-c_{1}+c_{2}}\nonumber \\
 & \overset{\left(b\right)}{<}\left(1+\xi'\right)\beta_{0}\gamma_{0}^{-1}\left(1-\xi\right)^{-2c_{1}}q_{\mathrm{a}}^{1-c_{1}+c_{2}}k^{-c_{1}+c_{2}},
\end{align}
where $\left(a\right)$ is by $x-1<\left\lfloor x\right\rfloor \leq x$,
$\forall x\geq0$ and $\left(b\right)$ is by using $(\frac{yx+1}{x+1})^{-z}<y^{-z}$,
$\forall0<y<1$ and $\forall x,z\in\mathbb{R}^{+}$, so that 
\begin{align*}
 & \left(\!\frac{\left(1-\xi\right)\!q_{\mathrm{a}}\!\left(k-1\right)\!+1}{q_{\mathrm{a}}\left(k-1\right)+1}\!\right)^{\!\!-c_{1}\!-c_{2}}\!\!\left(\!\frac{\left(1-\xi\right)\!q_{\mathrm{a}}\!\left(k-1\right)\!+1}{\left(k-1\right)+1}\!\right)^{\!\!-c_{1}\!+c_{2}}\\
 & <\left(1-\xi\right)^{-c_{1}-c_{2}}\left(\left(1-\xi\right)q_{\mathrm{a}}\right)^{-c_{1}+c_{2}}=\left(1-\xi\right)^{-2c_{1}}q_{\mathrm{a}}^{-c_{1}+c_{2}}.
\end{align*}
Using similar steps, (\ref{eq:rate_2})-(\ref{eq:const_2}) can be
proved as well.

\subsection{\label{subsec:Proof_last}Proof of Lemma~\ref{lem:constant_term}}

In order to prove Lemma~\ref{lem:constant_term}, we mainly need
to show that $\lim_{k\rightarrow\infty}\beta_{0}\gamma_{0}\varpi_{k}<+\infty$
and $\lim_{k\rightarrow\infty}\beta_{0}\gamma_{0}\chi_{k}<+\infty$,
as both numerators and denominators of $\varpi_{k}$ and $\chi_{k}$
are bounded and vanishing. We present a basic lemma in Appendix~\ref{subsec:A-useful-lemma},
then the convergence of the upper bounds of $\varpi_{k}$ and of $\chi_{k}$
are investigated in Appendix~\ref{subsec:Convergence-ofw} and in
Appendix~\ref{subsec:Convergence-ofw}, respectively.

\subsubsection{\label{subsec:A-useful-lemma}A useful lemma }

We mainly prove the following lemma: 
\begin{lem}
\label{lem:diff_bound}Consider a sequence $z_{\ell}=\ell^{-c}$ with
$c>0$. Define $\overline{z}_{k}=\sum_{\ell=0}^{k-1}z_{\ell+1}q_{\mathrm{a}}^{\ell+1}\left(1-q_{\mathrm{a}}\right)^{k-\ell-1}\binom{k-1}{\ell}$
and 
\begin{align}
\overline{z}'_{k} & =\sum_{\ell=0}^{k-1}\frac{z_{\ell+1}}{\ell+1}q_{\mathrm{a}}^{\ell+1}\left(1-q_{\mathrm{a}}\right)^{k-1-\ell}\binom{k-1}{\ell},\label{eq:alpha_p}
\end{align}
then 
\begin{equation}
\overline{z}_{k}-\overline{z}_{k+1}<cq_{\mathrm{a}}\overline{z}'_{k}.\label{eq:diff_c}
\end{equation}
\end{lem}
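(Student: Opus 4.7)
The plan is to rewrite $\overline{z}_{k+1}$ using Pascal's identity so that the difference $\overline{z}_k - \overline{z}_{k+1}$ becomes a binomially weighted average of the simple term-wise decrements $z_{\ell+1} - z_{\ell+2}$, and then to bound each such decrement by elementary calculus on $x \mapsto x^{-c}$.

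First, I would apply $\binom{k}{\ell} = \binom{k-1}{\ell} + \binom{k-1}{\ell-1}$ inside the definition of $\overline{z}_{k+1}$ and split the sum into two pieces. The piece carrying $\binom{k-1}{\ell}$ collapses (using $(1-q_{\mathrm{a}})^{k-\ell} = (1-q_{\mathrm{a}}) \cdot (1-q_{\mathrm{a}})^{k-1-\ell}$) to $(1-q_{\mathrm{a}})\overline{z}_k$, while the piece carrying $\binom{k-1}{\ell-1}$, after the index shift $m = \ell - 1$, becomes $q_{\mathrm{a}} \sum_{m=0}^{k-1} z_{m+2}\, q_{\mathrm{a}}^{m+1}(1-q_{\mathrm{a}})^{k-1-m}\binom{k-1}{m}$. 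Subtracting from $\overline{z}_k$ yields the identity
\[
\overline{z}_k - \overline{z}_{k+1} = q_{\mathrm{a}} \sum_{\ell=0}^{k-1} (z_{\ell+1} - z_{\ell+2})\, q_{\mathrm{a}}^{\ell+1}(1-q_{\mathrm{a}})^{k-1-\ell}\binom{k-1}{\ell}.
\]

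Next, since $f(x) = x^{-c}$ has derivative $f'(x) = -cx^{-c-1}$ whose magnitude is strictly decreasing on $(0,\infty)$, the mean value theorem on $[\ell+1,\ell+2]$ gives the strict bound $z_{\ell+1} - z_{\ell+2} < c(\ell+1)^{-c-1} = c\, z_{\ell+1}/(\ell+1)$. Substituting this pointwise estimate into the displayed identity produces precisely $cq_{\mathrm{a}}$ times the sum defining $\overline{z}'_k$ in~(\ref{eq:alpha_p}), which is exactly~(\ref{eq:diff_c}).

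The argument is essentially routine; the only delicate point is the index-shift bookkeeping after invoking Pascal's identity, and ensuring that the strict inequality from the mean value theorem survives the binomial averaging, which it does because all binomial weights are non-negative and the termwise bound is strict for every $\ell \in \{0, \ldots, k-1\}$.
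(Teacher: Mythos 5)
Your proposal is correct and follows essentially the same route as the paper's proof: Pascal's identity $\binom{k}{\ell}=\binom{k-1}{\ell}+\binom{k-1}{\ell-1}$ turns $\overline{z}_{k}-\overline{z}_{k+1}$ into the binomially weighted sum $q_{\mathrm{a}}\sum_{\ell=0}^{k-1}(z_{\ell+1}-z_{\ell+2})\,q_{\mathrm{a}}^{\ell+1}(1-q_{\mathrm{a}})^{k-1-\ell}\binom{k-1}{\ell}$, after which the termwise decrement is bounded by $c\,z_{\ell+1}/(\ell+1)$. The only cosmetic difference is that you justify the decrement bound by the mean value theorem while the paper writes it as $1-(1+\tfrac{1}{\ell+1})^{-c}<\tfrac{c}{\ell+1}$; these are the same elementary fact.
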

\begin{IEEEproof}
We rewrite $\overline{z}_{k+1}$ as follows 
\begin{align}
 & \overline{z}_{k+1}\overset{\left(a\right)}{=}z_{1}q_{\mathrm{a}}\left(1-q_{\mathrm{a}}\right)^{k}+\sum_{\ell=1}^{k-1}z_{\ell+1}q_{\mathrm{a}}^{\ell+1}\left(1-q_{\mathrm{a}}\right)^{k-\ell}\binom{k-1}{\ell}\nonumber \\
 & \qquad+\sum_{\ell=1}^{k-1}z_{\ell+1}q_{\mathrm{a}}^{\ell+1}\left(1-q_{\mathrm{a}}\right)^{k-\ell}\binom{k-1}{\ell-1}+z_{k+1}q_{\mathrm{a}}^{k+1}\nonumber \\
 & =\sum_{\ell=0}^{k-1}\!\left(\left(1-q_{\mathrm{a}}\right)z_{\ell+1}+q_{\mathrm{a}}z_{\ell+2}\right)q_{\mathrm{a}}^{\ell+1}\!\left(1-q_{\mathrm{a}}\right)^{k-1-\ell}\!\binom{k-1}{\ell},\label{eq:rela}
\end{align}
where $\left(a\right)$ is by $\binom{k}{\ell}=\binom{k-1}{\ell}+\binom{k-1}{\ell-1}$.
From (\ref{eq:rela}), we have 
\begin{align}
 & \overline{z}_{k}-\overline{z}_{k+1}\overset{\left(a\right)}{=}\sum_{\ell=0}^{k-1}\left(z_{\ell+1}-z_{\ell+2}\right)q_{\mathrm{a}}^{\ell+2}\left(1-q_{\mathrm{a}}\right)^{k-1-\ell}\binom{k-1}{\ell}\nonumber \\
 & \overset{\left(b\right)}{<}\!cq_{\mathrm{a}}\!\sum_{\ell=0}^{k-1}\!\frac{z_{\ell+1}}{\ell+1}q_{\mathrm{a}}^{\ell+1}\!\left(1-q_{\mathrm{a}}\right)^{k-1-\ell}\!\binom{k-1}{\ell}\!=\!cq_{\mathrm{a}}\overline{z}'_{k},\label{eq:diff_c-1}
\end{align}
where $\left(b\right)$ is by $\frac{z_{\ell+1}-z_{\ell+2}}{z_{\ell+1}}=1-(1+\frac{1}{\ell+1})^{-c}<\frac{c}{\ell+1},$
such bound is tight when $\ell$ is large, as $\lim_{x\rightarrow0}\frac{1-(1+x)^{-c}}{cx}=1$.
It is worth mentioning that $\overline{z}_{k}-\overline{z}_{k+1}>0$
can be directly proved from (\ref{eq:diff_c-1})-$\left(a\right)$,
since $z_{\ell+1}>z_{\ell+2}$, $\forall\ell$. Such result is valid
for any decreasing sequence. 
\end{IEEEproof}

\subsubsection{\label{subsec:Convergence-ofw}Convergence of $\varpi_{k}$}

Applying Lemma~(\ref{lem:diff_bound}) and by replacing $z_{\ell}=\ell^{-c}$
with $\beta_{\ell}^{2}=\beta_{0}^{2}\ell^{-2c_{1}}$, we have 
\begin{align}
 & \upsilon_{k}\theta_{k+1}-\upsilon_{k+1}\theta_{k}<\left(\upsilon_{k}-\upsilon_{k+1}\right)\theta_{k}\nonumber \\
 & <2c_{1}\theta_{k}\sum_{\ell=0}^{k-1}\frac{\beta_{\ell+1}^{2}}{\ell+1}q_{\mathrm{a}}^{\ell+2}\left(1-q_{\mathrm{a}}\right)^{k-1-\ell}\binom{k-1}{\ell}.
\end{align}
We use Lemma~\ref{lem:bound_chern} to get, for any $0<\xi<1$, 
\begin{align*}
\upsilon_{k}\theta_{k+1}\!-\!\upsilon_{k+1}\theta_{k} & \!<\!2c_{1}q_{\mathrm{a}}^{2}\beta_{0}^{2}\theta_{k}(p_{k,\xi}\!+\!\left(\left(1-\xi\right)q_{\mathrm{a}}k\right)^{-2c_{1}-1}),
\end{align*}
from which and (\ref{eq:lower_jenson}) we can deduce
\begin{align}
\varpi_{k} & <\frac{2c_{1}q_{\mathrm{a}}^{2}\beta_{0}^{2}\left(p_{k,\xi}+\left(\left(1-\xi\right)q_{\mathrm{a}}k\right)^{-2c_{1}-1}\right)}{q_{\mathrm{a}}\beta_{0}^{2}\left(q_{\mathrm{a}}\left(k-1\right)+1\right)^{-2c_{1}}\beta_{0}\gamma_{0}\left(q_{\mathrm{a}}k+1\right)^{-c_{1}-c_{2}}}\nonumber \\
 & <\frac{2c_{1}q_{\mathrm{a}}\left(p_{k,\xi}+\left(\left(1-\xi\right)q_{\mathrm{a}}k\right)^{-2c_{1}-1}\right)}{\beta_{0}\gamma_{0}\left(q_{\mathrm{a}}k+1\right)^{-3c_{1}-c_{2}}}=\varpi_{k}^{+}.
\end{align}
We have $\lim_{k\rightarrow\infty}p_{k,\xi}\left(q_{\mathrm{a}}k+1\right)^{3c_{1}+c_{2}}=0$
and 
\[
\lim_{k\rightarrow\infty}\frac{\left(\left(1-\xi\right)q_{\mathrm{a}}k\right)^{-2c_{1}-1}}{\left(q_{\mathrm{a}}k+1\right)^{-3c_{1}-c_{2}}}=\begin{cases}
0 & \!\!\textrm{if }c_{1}+c_{2}<1\\
\left(1-\xi\right)^{-2c_{1}-1} & \!\!\textrm{if }c_{1}+c_{2}=1
\end{cases}
\]
Hence $\lim_{k\rightarrow\infty}\beta_{0}\gamma_{0}\varpi_{k}^{+}\leq2c_{1}q_{\mathrm{a}}\left(1-\xi\right)^{-2c_{1}-1}$
as $c_{1}+c_{2}\leq1$. We can deduce that $\beta_{0}\gamma_{0}\varpi_{k}$
is bounded and $\varpi_{k}<A$ can be true $\forall k$, as long as
the value of $\beta_{0}\gamma_{0}$ is large enough.

\subsubsection{\label{subsec:Convergence-ofw-1}Convergence of $\chi_{k}$}

We can use similar steps to show that $\beta_{0}\gamma_{0}\chi_{k}$
is bounded. We need to evaluate 
\begin{align}
 & \psi_{k}-\psi_{k+1}<\overline{\beta\gamma^{2}}_{k}-\overline{\beta\gamma^{2}}_{k+1}+2N\left(\overline{\beta\gamma}_{k}\overline{\gamma}_{k}-\overline{\beta\gamma}_{k+1}\overline{\gamma}_{k+1}\right)\nonumber \\
 & +\left(N-1\right)^{2}\left(\overline{\beta}_{k}\overline{\gamma^{2}}_{k}-\overline{\beta}_{k+1}\overline{\gamma^{2}}_{k+1}\right)\overset{\left(a\right)}{<}q_{\mathrm{a}}\left(c_{1}+2c_{2}\right)\overline{\beta\gamma^{2}}'_{k}\nonumber \\
 & +2Nq_{\mathrm{a}}\!\left(\!\left(c_{1}+c_{2}\right)\!\overline{\beta\gamma}'_{k}\overline{\gamma}_{k}+c_{2}\overline{\beta\gamma}_{k}\overline{\gamma}'_{k}-\!\left(c_{1}+c_{2}\right)\!c_{2}q_{\mathrm{a}}\overline{\beta\gamma}'_{k}\overline{\gamma}'_{k}\right)\nonumber \\
 & +\left(N-1\right)^{2}q_{\mathrm{a}}\left(2c_{2}\overline{\beta}_{k}\overline{\gamma^{2}}'_{k}+c_{1}\overline{\beta}'_{k}\overline{\gamma^{2}}_{k}-2c_{1}c_{2}q_{\mathrm{a}}\overline{\beta}'_{k}\overline{\gamma^{2}}'_{k}\right)\nonumber \\
 & \overset{\left(b\right)}{<}\left(c_{1}+2c_{2}\right)\!\beta_{0}\gamma_{0}^{2}q_{\mathrm{a}}\left(\lambda^{2}+2q_{\mathrm{a}}\right)\left(\left(\left(1-\xi\right)q_{\mathrm{a}}k\right)^{-c_{1}-2c_{2}-1}\right.\nonumber \\
 & \left.\hspace{16em}+p_{k,\xi}C''\right),\label{eq:diff_y}
\end{align}
where $(a)$ is obtained by applying Lemma~\ref{lem:diff_bound},
the terms $\overline{\beta\gamma^{2}}'_{k}$, $\overline{\beta\gamma}'_{k}$
and $\overline{\gamma}'_{k}$ are defined in the same way as $\overline{a}'_{k}$
in (\ref{eq:alpha_p}). We can use Lemma~\ref{lem:bound_chern} to
show $\left(b\right)$. Note that the explicit expression of the upper
bound is quite long, we introduce a bounded constant $C''$ instead.
The bound (\ref{eq:diff_y}) is reasonably tight as $p_{k,\xi}=\exp\left(-\frac{1}{2}\xi q_{\mathrm{a}}k\right)$
is negligible before $\overline{k}_{\xi}^{-c_{1}-2c_{2}-1}$ when
$k$ goes large.

Based on (\ref{eq:diff_y}) and (\ref{eq:lower_jenson}), we can get
\begin{align*}
 & \frac{\chi_{k}}{1+\frac{\psi_{k+1}\theta_{k}}{\psi_{k}\theta_{k+1}}}=\frac{\psi_{k}\theta_{k+1}-\psi_{k+1}\theta_{k}}{\psi_{k}\theta_{k+1}\theta_{k}}<\frac{\left(\psi_{k}-\psi_{k+1}\right)\theta_{k}}{\psi_{k}\theta_{k+1}\theta_{k}}\\
 & <\frac{\left(c_{1}+2c_{2}\right)q_{\mathrm{a}}\left(\left(\left(1-\xi\right)q_{\mathrm{a}}k\right)^{-c_{1}-2c_{2}-1}+p_{k,\xi}C''\right)}{\left(1+2q_{\mathrm{a}}\lambda^{-2}\right)^{-1}\beta_{0}\gamma_{0}\left(q_{\mathrm{a}}k+1\right)^{-2c_{1}-3c_{2}}}=\chi_{k}^{+}.
\end{align*}
Since $\lim_{k\rightarrow\infty}\left(\frac{\psi_{k+1}\theta_{k}}{\psi_{k}\theta_{k+1}}+1\right)=2$
and 
\[
\lim_{k\rightarrow\infty}\chi_{k}^{+}=\begin{cases}
0 & \textrm{if }c_{1}+c_{2}<1,\\
\frac{\left(c_{1}+2c_{2}\right)q_{\mathrm{a}}\left(\left(1-\xi\right)\right)^{-c_{1}-2c_{2}-1}}{\left(1+2q_{\mathrm{a}}\lambda^{-2}\right)^{-1}\beta_{0}\gamma_{0}} & \textrm{if }c_{1}+c_{2}=1,
\end{cases}
\]
we can conclude that $\lim_{k\rightarrow\infty}\beta_{0}\gamma_{0}\chi_{k}$
is bounded. Therefore $\chi_{k}<A$ can be true when $\beta_{0}\gamma_{0}$
is large enough. 
\bibliographystyle{IEEEtran}
\bibliography{BiblioWenjie}

\end{document}